\documentclass{article}
\usepackage{amsmath,amssymb,amsthm,mathrsfs}
\usepackage[colorlinks=true,linkcolor=blue]{hyperref}


\topmargin -1cm
\textheight 21cm
\textwidth 15cm
\oddsidemargin 1cm

\parindent 0pt
\linespread{1.1}


\newtheorem{theorem}{Theorem}[section]
\newtheorem{proposition}{Proposition}[section]

\newtheorem{definition}{Definition}[section]
\newtheorem{remark}{Remark}[section]
\newtheorem{lemma}{Lemma}[section]
\newtheorem{example}{Example}[section]


\newcommand{\fonction}[5]{\begin{array}[t]{lrcl}#1 :&#2 &\longrightarrow &#3\\&#4& \longmapsto &#5 \end{array}}

\DeclareMathOperator*{\argmin}{arg\,min}
\DeclareMathOperator*{\argmax}{arg\,max}

\def\di{\displaystyle}
\def\a{\alpha}
\def\t{\tau}
\def\eps{\varepsilon}
\def\d{\mathrm{d}}

\def\R{\mathbb{R}}
\def\N{\mathbb{N}}
\def\L{\mathrm{L}}
\def\J{\mathrm{J}}
\def\E{\mathrm{E}}
\def\K{\mathrm{K}}

\def\PP{\mathrm{P}}
\def\PPP{\mathcal{P}}
\def\S{\mathrm{S}}
\def\LL{\mathcal{L}}
\def\C{\mathrm{C}}
\def\CCC{\mathcal{F}}
\def\B{\mathrm{B}}
\def\BB{\overline{\B}}
\def\AC{\mathrm{AC}}

\def\cc{\mathrm{c}}
\def\ACm{\AC^\a_{a+}}
\def\CACm{{}_{\cc} \ACm}
\def\CACmi{{}_{\cc} \AC^{\a,\infty}_{a+}}
\def\I{\mathrm{I}}
\def\Im{\I^\a_{a+}}
\def\Ip{\I^\a_{b-}}
\def\D{\mathrm{D}}
\def\CD{{}_{\cc} \mathrm{D}}
\def\Dm{\D^\a_{a+}}
\def\CDm{\CD^\a_{a+}}
\def\Dp{\D^\a_{b-}}
\def\CDp{\CD^\a_{b-}}


\title{Legendre's necessary condition for fractional Bolza functionals with mixed initial/final constraints}

\author{Lo\"ic Bourdin\footnote{Institut de Recherche XLIM. UMR CNRS 7252. Universit\'e de Limoges, France. \texttt{loic.bourdin@unilim.fr}}, Rui A.C. Ferreira\footnote{Grupo F\'isica-Matem\'atica, Faculdade de Ci\^encias, Universidade de Lisboa, Av. Prof. Gama Pinto, 2, 1649-003 Lisboa. \texttt{raferreira@fc.ul.pt}}}

\date{}

\begin{document}

\maketitle

\begin{abstract}
The present work was primarily motivated by our findings in the literature of some flaws within the proof of the second-order Legendre necessary optimality condition for fractional calculus of variations problems. Therefore we were eager to elaborate a correct proof and it turns out that this goal is highly nontrivial, especially when considering final constraints. This paper is the result of our reflections on this subject. 

Precisely we consider here a constrained minimization problem of a general Bolza functional that depends on a Caputo fractional derivative of order $0 < \alpha \leq 1$ and on a Riemann-Liouville fractional integral of order $\beta > 0$, the constraint set describing general mixed initial/final constraints. The main contribution of our work is to derive corresponding first- and second-order necessary optimality conditions, namely the Euler-Lagrange equation, the transversality conditions and, of course, the Legendre condition. A detailed discussion is provided on the obstructions encountered with the classical strategy, while the new proof that we propose here is based on the Ekeland variational principle.

Furthermore we underline that some subsidiary contributions are provided all along the paper. In particular we prove an independent and intrinsic  result of fractional calculus stating that it does not exist a nontrivial function which is, together with its Caputo fractional derivative of order~$0 < \alpha <1$, compactly supported. Moreover we also discuss some evidences claiming that Riemann-Liouville fractional integrals should be considered in the formulation of fractional calculus of variations problems in order to preserve the existence of solutions.
\end{abstract}

\textbf{Keywords:} Fractional calculus of variations; Bolza functional; mixed initial/final constraints; Euler-Lagrange equation; transversality conditions; Legendre condition; Riemann-Liouville and Caputo fractional operators; Ekeland variational principle.

\medskip

\textbf{AMS Classification:} 26A33; 34A08; 49K05; 49K99.

\section{Introduction}
It is widely known that the \textit{calculus of variations} is nearly as old as the calculus, being both subjects developed in a somewhat parallel way (see, e.g.,~\cite[p.1]{Brunt2004}). It is commonly accepted that the calculus of variations was born with the brachistochrone problem posed by Bernouilli in 1696. The enduring interest in the calculus of variations is due to its numerous applications in several scientific areas, and in particular to its relationships with classical mechanics. Incidentally the \textit{fractional calculus} (or calculus using derivatives of any real order~$\alpha>0$) is also as old as the calculus, since fractional derivatives made part of Leibniz's studies (see, e.g.,~\cite[p.xxvii]{Samko}). Since a conference organized by Ross at the University of New Haven in 1974 dedicated to the applications of fractional calculus, this topic experiences a boom in several scientific fields. Its use is so wide that it seems difficult to give a complete overview of the current investigations involving fractional operators. We can at least mention that the fractional calculus is massively applied in the physical context of anomalous diffusion (see, e.g.,~\cite{Gerolymatou2006,Metzler2000,Oldham1970, Zaslavsky2002,Zaslavsky2008,Zoia2010,Zoia2009}). Due to the non-locality of the fractional operators, they are also used in order to take into account memory effects (see, e.g.,~\cite{Bagley1983,Bagley1986,Pfitzenreiter2004}) where viscoelasticity is modelled by a fractional differential equation. We also refer to studies in wave mechanic \cite{Almeida2010}, economy \cite{Comte2001}, biology \cite{Glockle1995,Magin2010}, acoustic \cite{Helie2006}, thermodynamic~\cite{Hilfer2000}, probability \cite{Levy1939}, etc. We refer to~\cite{Hilfer2000b,Sabatier2007} for a large panorama of applications of fractional calculus.

\medskip

Historically it seems that the first time when a fractional operator appeared in the resolution of a concrete problem goes back to the Abel's solution of the tautochrone problem in 1826. Since the tautochrone problem can be seen as a variant of the brachistochrone problem, one can see in history a kind of interconnection between the calculus of variations and the fractional calculus. However, to the best of our knowledge, these two mathematical fields have been formally linked together for the first time only in 1996 by Riewe~\cite{Riewe1996}, when looking for a variational structure of nonconservative systems. The fractional Euler-Lagrange equation was stated there for the first time in the literature. In the past twenty years, many papers containing novel related results have been produced (see, e.g., \cite{Agrawal,Atan,bastos,bourdintorres,bourdin2014,cresson,Cresson2012,ferreira2011,klimek,bale} and references therein). Unfortunately a great account of articles dealing with this recent subject possess many inconsistencies, and occasionally even present serious mistakes, as reported e.g. in~\cite{Ferreira2018,Ferreira2015}. Roughly speaking and in some sense, the \emph{fractional calculus of variations} is still in its infancy. The present work being focused on a general problem of this field, one of our objectives here is to provide a rigorous treatment, in particular by taking care of the functional framework considered (see Remarks~\ref{remunsuitable}, \ref{remsuitable} and~\ref{remcont} for example).

\paragraph{Initial motivation of the present work.}

The study that we are presenting here was primarily motivated by our findings of some flaws within the proof of the second-order Legendre necessary optimality condition for the most basic\footnote{As a first step towards the open challenge of proving a fractional version of the second-order Legendre necessary optimality condition, the cost functional considered in~\cite{Lazo2014} is the most basic one, containing (only) a classical Lagrange cost depending on a fractional derivative of order~$0 < \alpha \leq 1$, under fixed initial/final conditions.} fractional calculus of variations problem presented in 2014 in~\cite{Lazo2014}. Soon after we found another contemporary work~\cite{Guo2013} in which a proof of such condition was also presented (in fact for a more general fractional optimal control problem but without final constraint) with, again, some erroneous assertions. Incredibly, more recently, in a series of papers~\cite{Almeida2017,Almeida2018,AlmeidaMorgado}, the author(s) presented a completely analogous (and thus incorrect) proof to the one in~\cite{Lazo2014} for some variants of fractional variational problems. Therefore we became very eager to elaborate a correct proof of the Legendre condition in the fractional setting. It turns out that this goal is highly nontrivial, especially when considering final constraints. We refer to Section~\ref{secobstruction} for a detailed discussion on the obstructions encountered in that framework.

\paragraph{Contributions of the manuscript.}
Having in mind what we wrote in the two previous paragraphs, with this work we aim to provide a rigorous and thoughtful study of a general fractional calculus of variations problem. Precisely we consider a constrained minimization problem of a general Bolza functional that contains a Mayer cost, as well as a Lagrange cost written with a Riemann-Liouville fractional integral of order $\beta > 0$ and depending on a Caputo fractional derivative of order $0 < \alpha \leq 1$. Furthermore we emphasize that, in this paper, the constraint set will be described with general mixed initial/final constraints. We refer to the beginning of Section~\ref{OP2} for a detailed presentation of the problem considered here (see Problem~\eqref{minproblem}). The main contribution of the present manuscript is to derive corresponding first- and second-order necessary optimality conditions, namely the Euler-Lagrange equation, the transversality conditions and, of course, the Legendre condition. To emphasize the obstructions encountered in the fractional setting when considering final constraints, we provide two separate studies:
\begin{enumerate}
\item[\rm{(i)}] The first one concerns the (simple) case where no constraint is considered in Problem~\eqref{minproblem} (see Section~\ref{secwithout}). The main result of this part is given in Theorem~\ref{thmmain1}. Our proof is a nontrivial adaptation of the standard techniques from the classical calculus of variations to the fractional setting, in the sense that several technical difficulties arising from the nonlocality of the fractional operators have to be overcome (see the use of a technical inequality derived in Lemma~\ref{lem30} for the proof of Lemma~\ref{lem3} for example). 
\item[\rm{(ii)}] The second study is concerned with the (more difficult) case where general mixed initial/final constraints are considered in Problem~\eqref{minproblem} (see Section~\ref{secwith}). As already mentioned, note that a preliminary discussion on the emerged obstructions is firstly provided in Section~\ref{secobstruction}. Nevertheless we are able to prove our main result (see Theorem~\ref{thm2}) which encompasses Theorem~\ref{thmmain1} but with a completely different approach. In the classical case $\alpha = \beta = 1$, it is well-known that the necessary optimality conditions considered in this paper can be seen as consequences of the Pontryagin maximum principle (in short, PMP) from optimal control theory. We refer to~\cite{agrachev,bressan,bullo,hestenes,Liberzon,pontryagin,schattler} and references therein. Therefore our idea is to adapt a standard proof of the PMP to our particular fractional variational problem. In this paper we follow and adapt the proof based on the Ekeland variational principle~\cite{Ekeland1974}.
\end{enumerate}
The two above items fill our initial objective about the fractional version of the Legendre condition. Furthermore we precise that some subsidiary contributions are provided all along the paper:
\begin{enumerate}
\item[\rm{(iii)}] On one hand, in Section~\ref{secobstruction}, we will dwell into the details of the proofs presented in~\cite{Guo2013} and~\cite{Lazo2014} and enlighten the serious flaws contained within them.
\item[\rm{(iv)}] Along with our primary considerations about the obstructions encountered for the Legendre condition in the fractional setting with final constraints, a side result emerged that may be of independent interest for researchers in fractional calculus. The reader may find its contents in Proposition~\ref{prop765} in Section~\ref{secobstruction} (see also Proposition~\ref{prop765bis} in Appendix~\ref{app09}). Concretely this result asserts that it does not exist a nontrivial function which is, together with its Caputo fractional derivative of order~$0 < \alpha <1$, compactly supported. Note that this result is an intrinsic result of fractional calculus, in the sense that it is clearly not true for~$\alpha = 1$. We refer to Remark~\ref{remfracdiff} for some consequences in the fractional differential equations theory.
\item[\rm{(v)}] One particular feature of the present work is that we consider, in our formulation of the Bolza functional, a Riemann-Liouville fractional integral of order~$\beta>0$. At a first glance this seems to be only one more fractional generalization of the classical calculus of variations. In fact, as observed recently in~\cite[Section~4]{Ferreira2018}, there is evidence that Riemann-Liouville fractional integrals should indeed be considered in the formulation of fractional calculus of variations problems in order to guarantee existence of solutions. We shall continue in this line of thought and we provide here some more evidences supporting that claim (see Remark~\ref{rem465}). Precisely, as illustrated in Example~\ref{ex1}, a fractional generalization of a simple calculus of variations problem, without considering a Riemann-Liouville fractional integral (that is, by considering~$\beta=1$), may not preserve the existence of a solution. This issue turns out to be a direct consequence of the transversality conditions derived in Theorems~\ref{thmmain1} and~\ref{thm2}.
\end{enumerate}

\paragraph{Organization of the paper.} Section~\ref{OP1} is devoted to definitions and basic results from fractional calculus used throughout the text. In Section~\ref{OP2} we enunciate our main results (Theorems~\ref{thmmain1} and~\ref{thm2}) and provide a list of related comments. This section is divided in several subsections as explained in the above paragraph. Finally, for the reader's convenience, the quite long and technical proofs of Proposition~\ref{prop765} and Theorem~\ref{thm2} are given in Appendices~\ref{app09} and~\ref{secproof}, respectively. 

\section{Notations and basics from fractional calculus}\label{OP1}

Throughout the paper the abbreviation RL stands for Riemann-Liouville. This section is devoted to recall basic definitions and results about RL and Caputo fractional operators. All of the presented below is very standard and mostly extracted from the monographs~\cite{Kilbas,Samko}. We first introduce some functional framework. Let $n\ge 1$ be a fixed positive integer, $a < b$ be two fixed real numbers and~$1 \leq r \leq \infty$ be a fixed extended real number. In this paper we denote by:
\begin{itemize}
\item $\L^r := \L^r([a,b],\R^n)$ the Lebesgue space of $r$-integrable functions (or, if $r=\infty$, of essentially bounded functions) defined on $[a,b]$ with values in $\R^n$, endowed with its usual norm $\| \cdot \|_{\L^r}$;
\item $\C := \C([a,b],\R^n)$ the space of continuous functions defined on $[a,b]$ with values in $\R^n$, endowed with the uniform norm $\| \cdot \|_\C$;
\item $\AC := \AC([a,b],\R^n)$ the subspace of $\C$ of absolutely continuous functions;
\item $\C^r := \C^r([a,b],\R^n)$ the subspace of $\AC$ of $r$ times continuously differentiable functions (or, if $r=\infty$, of infinitely differentiable functions);
\item $\C^\infty_\cc := \C^\infty_\cc([a,b],\R^n)$ the subspace of $\C^\infty$ of infinitely differentiable functions with compact support included in $(a,b)$.
\end{itemize}
For any functional set $\E \subset \C$, we denote by $\E_0$ the set of all functions $x \in \E$ such that~$x(a) = 0_{\R^n}$. For example $\C^\infty_\cc \subset \C^\infty_0 \subset \AC_0 \subset \C_0 \subset \C$.

\subsection{Left RL and Caputo fractional operators}\label{secleft}
We start with left fractional integrals and derivatives of RL and Caputo types. In the sequel $\Gamma$ denotes the standard Gamma function.

\begin{definition}[Left RL fractional integral]
The left RL fractional integral $\Im [x]$ of order $\a > 0$ of a function $x \in \L^1$ is defined on $[a,b]$ by 
$$ \Im [x](t) :=  \int_a^t \dfrac{(t-s)^{\a-1}}{\Gamma(\a)} x(s) \, ds, $$
provided that the right-hand side term exists. For $\alpha = 0$ we set $\I^0_{a+}[x] := x$.
\end{definition}

\begin{proposition}[{\cite[Lemma 2.1 p.72]{Kilbas}}]\label{prop1}
If $\a \geq 0$ and $x \in \L^1$, then $\Im [x] \in \L^1$.
\end{proposition}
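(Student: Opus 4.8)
The plan is to view $\Im[x]$ as the restriction to $[a,b]$ of the convolution of $x$ (extended by zero outside $[a,b]$) with the kernel $k_\a(u) := u^{\a-1}/\Gamma(\a)$, which is locally integrable on $[0,\infty)$ when $\a > 0$, and then to exploit the classical fact that convolving two $\L^1$ functions produces an $\L^1$ function (the $\L^1 \ast \L^1 \subset \L^1$ case of Young's inequality). When $\a = 0$ there is nothing to do since $\I^0_{a+}[x] = x \in \L^1$ by definition, so assume $\a > 0$. Rather than quoting a convolution lemma as a black box, I would run the argument through Tonelli's theorem, which has the advantage of simultaneously yielding that the integral defining $\Im[x](t)$ converges for almost every $t$ and that the resulting function is integrable.

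Concretely, consider the nonnegative measurable function $(s,t) \mapsto (t-s)^{\a-1}|x(s)|/\Gamma(\a)$ on the triangle $T := \{(s,t) : a \le s \le t \le b\}$; joint measurability is clear since it is a product of a continuous function of $t-s$ and a measurable function of $s$. By Tonelli's theorem its integral over $T$ may be evaluated in either order. Integrating first with respect to $t$ gives, for each fixed $s$,
$$\int_s^b \frac{(t-s)^{\a-1}}{\Gamma(\a)}\, dt = \frac{(b-s)^\a}{\Gamma(\a+1)} \le \frac{(b-a)^\a}{\Gamma(\a+1)},$$
so the full double integral over $T$ is bounded above by $\frac{(b-a)^\a}{\Gamma(\a+1)}\,\|x\|_{\L^1} < \infty$. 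Applying Tonelli in the other order then shows that $t \mapsto \int_a^t (t-s)^{\a-1}|x(s)|\,ds/\Gamma(\a)$ is finite for almost every $t \in [a,b]$ and belongs to $\L^1$. Hence the integral defining $\Im[x](t)$ is absolutely convergent for a.e.\ $t$, and the pointwise bound $|\Im[x](t)| \le \int_a^t (t-s)^{\a-1}|x(s)|\,ds/\Gamma(\a)$ gives $\Im[x] \in \L^1$, with the quantitative estimate $\|\Im[x]\|_{\L^1} \le \frac{(b-a)^\a}{\Gamma(\a+1)}\|x\|_{\L^1}$ as a byproduct.

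There is no genuine obstacle here: the argument is short and the only point requiring a modicum of care is the bookkeeping in the application of Tonelli's theorem — checking joint measurability on the triangular domain $T$ and correctly handling the variable limits of integration — which is dispatched by extending $x$ by zero outside $[a,b]$, or equivalently by working directly on $T$ as above. I would expect the author's proof (following~\cite{Kilbas}) to proceed along essentially these lines.
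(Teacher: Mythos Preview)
Your proof is correct and is the standard argument via Tonelli's theorem, yielding both the a.e.\ existence of $\Im[x](t)$ and the $\L^1$ bound $\Vert \Im[x] \Vert_{\L^1} \leq \frac{(b-a)^\a}{\Gamma(\a+1)} \Vert x \Vert_{\L^1}$. The paper does not prove this proposition at all: it simply cites it from \cite[Lemma~2.1 p.72]{Kilbas}, where the proof proceeds along exactly the lines you anticipate.
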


\begin{proposition}[{\cite[Lemma 2.3 p.73]{Kilbas}}]\label{prop5}
If $\a_1 \geq 0$, $\a_2 \geq 0$ and $x \in \L^1$, then the equalities
$$ \I^{\alpha_1}_{a+} \Big[ \I^{\alpha_2}_{a+} [ x ] \Big] = \I^{\alpha_1 +\alpha_2}_{a+} [ x ] = \I^{\alpha_2 +\alpha_1}_{a+} [ x ] = \I^{\alpha_2}_{a+} \Big[ \I^{\alpha_1}_{a+} [ x ] \Big], $$ 
hold true.
\end{proposition}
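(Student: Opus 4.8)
The plan is first to dispose of the degenerate cases and exploit symmetry, and then to prove the single remaining identity by Fubini's theorem together with a change of variables producing a Beta integral.

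If $\a_1 = 0$ or $\a_2 = 0$, all four expressions reduce to $\I^{\a_1+\a_2}_{a+}[x]$ at once because of the convention $\I^0_{a+}[x] = x$; hence assume from now on that $\a_1 > 0$ and $\a_2 > 0$. The middle equality $\I^{\a_1+\a_2}_{a+}[x] = \I^{\a_2+\a_1}_{a+}[x]$ is trivial, and once $\I^{\a_1}_{a+}\big[\I^{\a_2}_{a+}[x]\big] = \I^{\a_1+\a_2}_{a+}[x]$ has been established, the remaining equality $\I^{\a_2}_{a+}\big[\I^{\a_1}_{a+}[x]\big] = \I^{\a_2+\a_1}_{a+}[x]$ follows by exchanging the roles of $\a_1$ and $\a_2$. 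So everything reduces to the first identity.

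To prove it, fix $t \in [a,b]$ and write out
$$ \I^{\a_1}_{a+}\big[\I^{\a_2}_{a+}[x]\big](t) = \int_a^t \frac{(t-s)^{\a_1-1}}{\Gamma(\a_1)} \left( \int_a^s \frac{(s-\t)^{\a_2-1}}{\Gamma(\a_2)} x(\t) \, d\t \right) ds , $$
the inner integral being well-defined for a.e.\ $s$ by Proposition~\ref{prop1}. I would then interchange the two integrations. To legitimate this, apply Tonelli's theorem to the integrand with $x$ replaced by $|x| \in \L^1$: carrying out the computation below with $|x|$ shows that the corresponding (nonnegative) iterated integral equals $\I^{\a_1+\a_2}_{a+}[|x|](t)$, which is finite for a.e.\ $t$, again by Proposition~\ref{prop1}; hence the double integral is absolutely convergent for a.e.\ $t$ and Fubini's theorem applies. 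After the interchange, the region $\{ a \leq \t \leq s \leq t \}$ turns the inner integral into a kernel integral that I would evaluate by the affine substitution $s = \t + u(t-\t)$, $u \in [0,1]$, giving
$$ \int_\t^t (t-s)^{\a_1-1}(s-\t)^{\a_2-1} \, ds = (t-\t)^{\a_1+\a_2-1} \int_0^1 (1-u)^{\a_1-1} u^{\a_2-1} \, du = (t-\t)^{\a_1+\a_2-1} \, \B(\a_2,\a_1) . $$
Plugging in the Beta--Gamma relation $\B(\a_2,\a_1) = \Gamma(\a_1)\Gamma(\a_2)/\Gamma(\a_1+\a_2)$ collapses everything to $\int_a^t \frac{(t-\t)^{\a_1+\a_2-1}}{\Gamma(\a_1+\a_2)} x(\t) \, d\t = \I^{\a_1+\a_2}_{a+}[x](t)$, which is the desired identity.

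The only step requiring genuine care is the application of Fubini's theorem, that is, the verification of absolute integrability of the double integral; the change of variables and the use of the Beta--Gamma identity are entirely routine. I would also stress that obtaining finiteness for almost every $t$ suffices here, since by Proposition~\ref{prop1} all the fractional integrals appearing in the statement are elements of $\L^1$ and hence only defined up to equality almost everywhere.
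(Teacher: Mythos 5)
Your proof is correct. The paper does not prove this proposition itself --- it is quoted directly from the reference [Kilbas, Lemma~2.3 p.73] --- and your argument (reduction to one identity by symmetry, Tonelli/Fubini on the triangular region, and the Beta--Gamma evaluation of the inner kernel integral, with the almost-everywhere finiteness handled via Proposition~\ref{prop1}) is precisely the standard proof found in that source.
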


Let $\a \geq 0$ and $x \in \L^1$. From Proposition~\ref{prop1}, $\Im [x](t)$ exists for almost every $t \in [a,b]$. Throughout the paper, if $\Im [x]$ is equal almost everywhere on $[a,b]$ to a continuous function, then $\Im [x]$ is automatically identified to its continuous representative. In that case $\Im [x](t)$ is defined for every~$t \in [a,b]$.

\begin{proposition}[{\cite[Theorem 3.6 p.67]{Samko}}]\label{prop3}
If $\a > 0$ and $x \in \L^\infty$, then $\Im [x] \in \C_0$. 
\end{proposition}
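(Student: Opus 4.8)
The plan is to show directly that, under the hypothesis $x \in \L^\infty$, the function $\Im[x]$ is well-defined \emph{at every} point of $[a,b]$ (not merely almost everywhere), that it is continuous on $[a,b]$, and that it vanishes at $t=a$. Since $x$ is essentially bounded while $s \mapsto (t-s)^{\a-1}$ is Lebesgue-integrable on $[a,t]$ (with integral $(t-a)^\a/\a$), the defining integral $\int_a^t (t-s)^{\a-1} x(s)\,\d s$ converges absolutely for each fixed $t \in [a,b]$, so it defines $\Im[x](t)$ pointwise and this coincides almost everywhere with the $\L^1$-object of Proposition~\ref{prop1}. In particular $\Im[x](a)=0$, and the crude bound $|\Im[x](t)| \le \|x\|_{\L^\infty}(t-a)^\a/\Gamma(\a+1)$ already gives $\Im[x](t) \to 0$ as $t \to a^+$. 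It then remains to establish continuity on the whole interval.

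For $a \le t_1 < t_2 \le b$ I would split the increment as
\[
\Gamma(\a)\big(\Im[x](t_2)-\Im[x](t_1)\big) = \int_a^{t_1}\big[(t_2-s)^{\a-1}-(t_1-s)^{\a-1}\big]x(s)\,\d s + \int_{t_1}^{t_2}(t_2-s)^{\a-1}x(s)\,\d s,
\]
and bound each term by $\|x\|_{\L^\infty}$ times the integral of the modulus of its kernel. The second contribution is at once controlled by $\|x\|_{\L^\infty}(t_2-t_1)^\a/\a$, which tends to $0$. For the first contribution, the key observation is that on $[a,t_1)$ the difference $(t_2-s)^{\a-1}-(t_1-s)^{\a-1}$ has constant sign, determined solely by whether $\a \ge 1$ or $0<\a<1$ (monotonicity of $u \mapsto u^{\a-1}$); in either case the absolute value disappears and the remaining integral is computed explicitly in terms of $(t_1-a)^\a$, $(t_2-a)^\a$, and $(t_2-t_1)^\a$. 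Invoking the subadditivity of $u \mapsto u^\a$ when $0<\a<1$ (resp. the mean value theorem when $\a \ge 1$), this expression is seen to be $O((t_2-t_1)^\a)$ (resp. $O(t_2-t_1)$). Hence $\Im[x]$ is in fact $\a$-Hölder continuous for $0<\a<1$ and Lipschitz continuous for $\a \ge 1$, so continuous on $[a,b]$; combined with $\Im[x](a)=0$ this yields $\Im[x] \in \C_0$.

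The only genuinely delicate point is the kernel's singularity when $0<\a<1$, namely controlling $\int_a^{t_1}\big|(t_2-s)^{\a-1}-(t_1-s)^{\a-1}\big|\,\d s$ where $(t_1-s)^{\a-1}$ blows up as $s \uparrow t_1$. This is precisely resolved by the sign remark above: since $0 < t_1-s \le t_2-s$ and the exponent $\a-1$ is negative, one has $(t_1-s)^{\a-1} \ge (t_2-s)^{\a-1}$ throughout, so the integrand is nonnegative and equals $(t_1-s)^{\a-1}-(t_2-s)^{\a-1}$, whose primitive is elementary, the singularity being harmless because $\int_a^{t_1}(t_1-s)^{\a-1}\,\d s$ is finite. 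I expect the natural temptation, namely to substitute $s \mapsto t-s$ and pass to the limit by dominated convergence in the shifted variable, to fail here exactly because $x$ is only $\L^\infty$ and not continuous, so one cannot move the limit inside the integral without the explicit splitting and estimate above.
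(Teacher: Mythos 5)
Your proof is correct; the paper does not prove this proposition itself but simply cites \cite[Theorem 3.6 p.67]{Samko}, and your argument (splitting the increment at $t_1$, using the constant sign of $(t_2-s)^{\a-1}-(t_1-s)^{\a-1}$ to compute the kernel integrals explicitly, and invoking subadditivity of $u\mapsto u^\a$ for $0<\a<1$) is precisely the standard one found in that reference, yielding the slightly stronger H\"older/Lipschitz conclusion. Note also that your subadditivity step is the same inequality the paper records separately as Lemma~\ref{lem30}.
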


\begin{definition}[Left RL fractional derivative]\label{defRLleft}
We say that $x \in \L^1$ possesses a left RL fractional derivative $\Dm [x]$ of order $0 \leq \a \leq 1$ if and only if $\I^{1-\a}_{a+} [x] \in \AC $. In that case $\Dm [x] \in \L^1$ is defined by
$$ \Dm [x](t) := \dfrac{d}{dt} \Big[ \I^{1-\a}_{a+} [x] \Big] (t) , $$
for almost every $t \in [a,b]$. We denote by $\AC^\alpha_{a+} := \AC^\alpha_{a+}([a,b],\R^n)$ the space of all functions~$x \in \L^1$ possessing a left RL fractional derivative $\Dm [x]$ of order $0 \leq \a \leq 1$.
\end{definition}
 
\begin{remark}
If $\a = 1$, $\AC^1_{a+} = \AC $ and $\D^1_{a+} [x] = \dot{x}$ for any $x \in \AC $. If $\a = 0$, $\AC^0_{a+}= \L^1 $ and $\D^0_{a+} [x] = x$ for any $x \in \L^1$.
\end{remark}

\begin{proposition}[{\cite[Proposition~5 p.220]{bourdinidczak}}]\label{propcompRL}
Let $0 \leq \a \leq 1$ and $x \in \L^1 $. Then $x \in \AC^\alpha_{a+}$ if and only if there exists~$(u,y) \in \L^1 \times \R^n$ such that
$$ x(t) = \dfrac{(t-a)^{\a-1}}{\Gamma(\a)}  y + \Im [u](t), $$
for almost every $t \in [a,b]$. In that case, it necessarily holds that $u = \Dm [x]$ and $y = \I^{1-\a}_{a+}[x](a)$.
\end{proposition}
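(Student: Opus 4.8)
The plan is to deduce the claimed representation directly from the definition of $\ACm$, using the semigroup property of RL fractional integrals (Proposition~\ref{prop5}) and the elementary power rule $\I^\gamma_{a+}\big[(\cdot-a)^{\delta-1}\big](t)=\frac{\Gamma(\delta)}{\Gamma(\delta+\gamma)}(t-a)^{\delta+\gamma-1}$, valid for $\delta>0$ and $\gamma\ge 0$ (a Beta-integral computation; see e.g.~\cite{Kilbas,Samko}). Since $\AC^1_{a+}=\AC$ and $\AC^0_{a+}=\L^1$, the extreme cases $\a\in\{0,1\}$ amount to the classical characterization of absolutely continuous functions (resp.\ are trivial), so I would concentrate on $0<\a<1$. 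Throughout, Proposition~\ref{prop1} ensures that every fractional integral written below belongs to $\L^1$, so all manipulations are legitimate.

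For the direct implication, assume $x\in\ACm$, and set $u:=\Dm[x]\in\L^1$ and $y:=\I^{1-\a}_{a+}[x](a)\in\R^n$. Since $\I^{1-\a}_{a+}[x]\in\AC$, we have $\I^{1-\a}_{a+}[x](t)=y+\int_a^t u(s)\,ds=y+\I^1_{a+}[u](t)$ for every $t\in[a,b]$. Applying $\Im$ to this identity and invoking Proposition~\ref{prop5} (so that $\Im\big[\I^{1-\a}_{a+}[x]\big]=\I^1_{a+}[x]$ and $\Im\big[\I^1_{a+}[u]\big]=\I^1_{a+}\big[\Im[u]\big]$), together with $\Im[y](t)=\frac{(t-a)^\a}{\Gamma(\a+1)}y$, yields
$$\I^1_{a+}[x](t)=\frac{(t-a)^\a}{\Gamma(\a+1)}\,y+\I^1_{a+}\big[\Im[u]\big](t),\qquad t\in[a,b].$$
Both sides are absolutely continuous on $[a,b]$ — the first term because $\a>0$ makes $t\mapsto(t-a)^\a$ an element of $\AC$, the others by Proposition~\ref{prop1} — so I would differentiate a.e., use $\Gamma(\a+1)=\a\,\Gamma(\a)$, and read off $x(t)=\frac{(t-a)^{\a-1}}{\Gamma(\a)}y+\Im[u](t)$ for a.e.\ $t\in[a,b]$, which is the desired representation.

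For the converse, assume $x(t)=\frac{(t-a)^{\a-1}}{\Gamma(\a)}y+\Im[u](t)$ for a.e.\ $t$, with $(u,y)\in\L^1\times\R^n$. Applying the linear operator $\I^{1-\a}_{a+}$ term by term: the power rule with $\delta=\a$, $\gamma=1-\a$ gives $\I^{1-\a}_{a+}\big[(\cdot-a)^{\a-1}\big](t)=\frac{\Gamma(\a)}{\Gamma(1)}(t-a)^0=\Gamma(\a)$, hence $\I^{1-\a}_{a+}\big[\tfrac{(\cdot-a)^{\a-1}}{\Gamma(\a)}y\big]\equiv y$; and Proposition~\ref{prop5} gives $\I^{1-\a}_{a+}\big[\Im[u]\big]=\I^1_{a+}[u]$. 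Therefore $\I^{1-\a}_{a+}[x](t)=y+\int_a^t u(s)\,ds$, which is absolutely continuous, so $x\in\ACm$. Differentiating this last identity shows $\Dm[x]=u$ a.e., and evaluating it at $t=a$ (after identification with the continuous representative, as agreed in the text) gives $\I^{1-\a}_{a+}[x](a)=y$, which settles the uniqueness statement.

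I expect the only genuinely non-routine ingredient to be the power-rule identity $\I^{1-\a}_{a+}\big[(\cdot-a)^{\a-1}\big]\equiv\Gamma(\a)$ — equivalently, after the substitution $s=a+\tau(t-a)$, the Beta integral $\int_0^1\tau^{\a-1}(1-\tau)^{-\a}\,d\tau=B(\a,1-\a)=\Gamma(\a)\Gamma(1-\a)$ — which is precisely what collapses the singular term $(t-a)^{\a-1}/\Gamma(\a)$ onto a constant after one order-$(1-\a)$ integration. Everything else is bookkeeping with Propositions~\ref{prop1} and~\ref{prop5} and the classical fact that a.e.\ differentiation inverts the Lebesgue integral of an $\L^1$ function.
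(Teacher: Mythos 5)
Your argument is correct. Note, however, that the paper itself offers no proof of this proposition: it is imported verbatim from the cited reference \cite{bourdinidczak}, so there is no internal proof to compare against. Your two-way argument --- pushing the $\AC$ identity $\I^{1-\a}_{a+}[x](t)=y+\I^{1}_{a+}[u](t)$ forward through $\Im$ via the semigroup property for the direct implication, and collapsing the singular term with the Beta-integral identity $\I^{1-\a}_{a+}\big[(\cdot-a)^{\a-1}\big]\equiv\Gamma(\a)$ for the converse --- is the standard route and is essentially the proof given in the cited source; the only cosmetic point is that the absolute continuity of $\I^{1}_{a+}[x]$ and $\I^{1}_{a+}\big[\Im[u]\big]$ comes from the fundamental theorem of calculus for order-one integrals of $\L^1$ functions (Proposition~\ref{prop1} only supplies the $\L^1$ membership of the integrands), which is what you in fact use.
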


\begin{remark}\label{remunsuitable}
In general a function $x \in \AC^\alpha_{a+}$ admits a singularity at $t=a$. As a consequence it might be unsuitable to define a Bolza functional on the functional space $\AC^\alpha_{a+}$. In order to avoid this pitfall, in this paper we will use the Caputo notion of fractional derivative, recalled below.
\end{remark}

\begin{definition}[Left Caputo fractional derivative]\label{defCaputoleft}
We say that $x \in \C$ possesses a left Caputo fractional derivative $\CDm [x]$ of order $0 \leq \a \leq 1$ if and only if $x-x(a) \in \AC^\a_{a+}$. In that case~$\CDm [x] \in \L^1$ is defined by
$$ \CDm [x](t) := \Dm [x-x(a)] (t), $$
for almost every $t \in [a,b]$. We denote by $ {}_\cc \AC^\alpha_{a+} := {}_\cc \AC^\alpha_{a+}([a,b],\R^n)$ the space of all functions $x \in \C$ possessing a left Caputo fractional derivative $\CDm [x]$ of order $0 \leq \a \leq 1$.
\end{definition}

\begin{remark}
If $\a = 1$, ${}_\cc \AC^1_{a+} = \AC$ and ${}_\cc \D^1_{a+} [x] = \dot{x}$ for any $x \in \AC$. If $\a = 0$, ${}_\cc \AC^0_{a+} = \C $ and ${}_\cc \D^0_{a+} [x] = x-x(a)$ for any $x \in \C$.
\end{remark}

\begin{remark}
Let $0 \leq \a \leq 1$. Note that ${}_\cc \AC^\alpha_{a+} = \AC^\a_{a+} \cap \C $ and, if~$\a \neq 1$, it holds that
$$ \CDm[x](t) = \Dm[x](t)-\dfrac{(t-a)^{-\a} }{\Gamma(1-\a)}x(a) ,$$
for almost every $t \in [a,b]$ and all $x \in {}_\cc \AC^\alpha_{a+}$.
\end{remark}

\begin{proposition}[{\cite[Proposition~2.13 p.6]{Bourdin2018}}]\label{propcompC}
Let $0 \leq \a \leq 1$ and $x \in \C$. Then $x \in {}_\cc \AC^\alpha_{a+}$ if and only if there exists~$(u,y) \in \L^1 \times \R^n$ such that
$$ x(t) = y + \Im [u](t), $$
for almost every $t \in I$. In that case, the above relation holds replacing $u$ by ${}_\cc \Dm [x]$ and $y$ by $x(a)$.
\end{proposition}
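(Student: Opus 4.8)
\emph{Proof proposal.} The plan is to derive this characterization from its Riemann--Liouville counterpart, Proposition~\ref{propcompRL}, by means of the two definitional facts $\CACm = \AC^\a_{a+}\cap\C$ and $\CDm[x] = \Dm[x-x(a)]$. The endpoint cases $\a\in\{0,1\}$ are classical: for $\a=1$ the statement reduces to the elementary fact that a continuous function is absolutely continuous if and only if it is the value at $a$ plus the indefinite integral of an $\L^1$ function, and for $\a=0$ it is immediate from the definitions; so I would concentrate on $0<\a<1$.

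For the ``only if'' implication, together with the identification of the canonical pair, I would start from $x\in\CACm$ and set $g:=x-x(a)$, so that $g\in\AC^\a_{a+}$ and $g(a)=0$. Applying Proposition~\ref{propcompRL} to $g$ furnishes $(u_0,z)\in\L^1\times\R^n$ with
$$ g(t)=\frac{(t-a)^{\a-1}}{\Gamma(\a)}\,z+\Im[u_0](t) \qquad \text{for a.e. } t\in[a,b], $$
where necessarily $u_0=\Dm[g]=\CDm[x]$ and $z=\I^{1-\a}_{a+}[g](a)$. The crux is then to show $z=0$: since $g\in\C\subset\L^\infty$ and $1-\a>0$, Proposition~\ref{prop3} (applied at the order $1-\a$) gives $\I^{1-\a}_{a+}[g]\in\C_0$, whence $z=\I^{1-\a}_{a+}[g](a)=0$. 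Plugging this back yields $x(t)=x(a)+\Im[\CDm[x]](t)$ for a.e.\ $t\in[a,b]$, which simultaneously exhibits the pair $(u,y)=(\CDm[x],x(a))$ and establishes the last sentence of the statement.

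For the ``if'' implication, I would assume $x\in\C$ and $x(t)=y+\Im[u](t)$ a.e.\ for some $(u,y)\in\L^1\times\R^n$, and check that $\I^{1-\a}_{a+}[x-x(a)]\in\AC$. Writing $x-x(a)=(y-x(a))+\Im[u]$ and using linearity of $\I^{1-\a}_{a+}$, the contribution of the constant $y-x(a)$ is the explicit function $t\mapsto (y-x(a))\,(t-a)^{1-\a}/\Gamma(2-\a)$, which is absolutely continuous on $[a,b]$ because $t\mapsto(t-a)^{1-\a}$ is; and the contribution of $\Im[u]$ is $\I^{1-\a}_{a+}[\Im[u]]=\I^1_{a+}[u]$ by Proposition~\ref{prop5}, i.e.\ the function $t\mapsto\int_a^t u(s)\,ds$, which is absolutely continuous since $u\in\L^1$. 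Hence $\I^{1-\a}_{a+}[x-x(a)]\in\AC$, so $x-x(a)\in\AC^\a_{a+}$ by Definition~\ref{defRLleft}, and therefore $x\in\CACm$ by Definition~\ref{defCaputoleft}. Feeding this into the ``only if'' part then upgrades the representation to $x(t)=x(a)+\Im[\CDm[x]](t)$ a.e., as required.

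The step I expect to be the genuine obstacle — and the only place where more than bookkeeping is needed — is the vanishing $z=0$ above: it is precisely the feature that distinguishes the Caputo setting from the Riemann--Liouville one, and it hinges on the continuity (equivalently, boundedness) of $x-x(a)$ together with the restriction $\a<1$, which is what allows Proposition~\ref{prop3} to be invoked at the complementary order $1-\a$. Everything else is a direct manipulation of the two structural results recalled above.
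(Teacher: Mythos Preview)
The paper does not include a proof of this proposition; it is simply cited from~\cite{Bourdin2018}. Your argument is correct and is the natural one: reduce to the Riemann--Liouville characterization of Proposition~\ref{propcompRL} applied to $g=x-x(a)$, with the only substantive step being $z=\I^{1-\a}_{a+}[g](a)=0$, which you rightly obtain from Proposition~\ref{prop3} at order $1-\a$ using $g\in\L^\infty$.
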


\begin{remark}[{\cite[Remark~2.14 p.6]{Bourdin2018}}]\label{rempropcompC2}
Let $0 < \a \leq 1$ and $ x(t) = y + \Im [u](t)$ for almost every $t \in [a,b]$, for some $(u,y) \in \L^1 \times \R^n$. It might be possible that $x \notin \C$ and then Proposition~\ref{propcompC} cannot be applied. From Proposition~\ref{prop3}, if moreover $u \in \L^\infty$, then $x \in \C$ with $x(a)=y$ and Proposition~\ref{propcompC} can be applied. In that situation one can also conclude that $u = {}_\cc \Dm [x]$.
\end{remark}

\begin{remark}[{\cite[Remark~2.15 p.6]{Bourdin2018}}]\label{rempropcompC}
Let $0 < \a \leq 1$ and $x \in \C$ such that $ x(t) = y + \Im [u](t)$ for almost every $t \in [a,b]$, for some $(u,y) \in \L^1 \times \R^n$. From Proposition~\ref{propcompC}, we know that $x \in {}_\cc \AC^\alpha_{a+}$ and that $ x(t) = x(a) + \Im [ {}_\cc \Dm [x] ](t)$ for almost every $t \in [a,b]$. However, without any additional assumption, one cannot assert that $u={}_\cc \Dm [x]$ and $y = x(a)$. From Proposition~\ref{prop3}, if moreover $u \in \L^\infty $, then one can conclude that $u={}_\cc \Dm [x]$ and $y = x(a)$.
\end{remark}

From the above definitions and propositions, one can easily recover the following well-known result.

\begin{proposition}[{\cite[Theorem~2.1 p.92]{Kilbas}}]\label{prop5bis}
Let $0 \leq \a \leq 1$. The inclusion $\AC \subset {}_\cc \AC^\alpha_{a+}$ holds true with $ \CDm [x] = \I^{1-\a}_{a+} [ \dot{x} ] $ for any $x \in \AC$.
\end{proposition}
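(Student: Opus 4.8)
The statement to prove is that for $0 \leq \a \leq 1$ one has $\AC \subset {}_\cc \AC^\alpha_{a+}$, with the explicit formula $\CDm[x] = \I^{1-\a}_{a+}[\dot x]$ for every $x \in \AC$. The plan is to reduce everything to the composition rule for RL fractional integrals (Proposition~\ref{prop5}) together with the compactness/integrability facts already recorded (Propositions~\ref{prop1} and~\ref{prop3}) and the characterizations in Propositions~\ref{propcompRL} and~\ref{propcompC}. The two extreme cases $\a=0$ and $\a=1$ are immediate from the remarks following Definitions~\ref{defRLleft} and~\ref{defCaputoleft}, so the real content is $0 < \a < 1$, and I would dispatch the endpoints in one sentence and then fix $0<\a<1$.

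First I would take $x \in \AC$ and write $x(t) = x(a) + \int_a^t \dot x(s)\,ds = x(a) + \I^1_{a+}[\dot x](t)$ for every $t \in [a,b]$, using that $\dot x \in \L^1$. The key step is then to insert the semigroup identity: by Proposition~\ref{prop5} applied with $\alpha_1 = 1-\a \geq 0$ and $\alpha_2 = \a$ (noting $1-\a+\a = 1$), we get
$$ \I^1_{a+}[\dot x] = \I^{1-\a}_{a+}\Big[ \I^{\a}_{a+}[\dot x] \Big]. $$
Hence $x(t) = x(a) + \I^{1-\a}_{a+}\big[ \I^{\a}_{a+}[\dot x]\big](t)$ for almost every $t \in [a,b]$; equivalently, setting $u := \I^{1-\a}_{a+}[\dot x]$, which belongs to $\L^1$ by Proposition~\ref{prop1}, we have $(x - x(a))(t) = \I^{1-\a}_{a+}[\,\dot x\,]$-form... more precisely $x - x(a)$ is, modulo the $(t-a)^{\alpha-1}$ term with $y=0$, of the form displayed in Proposition~\ref{propcompRL}. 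Indeed, $x(t) - x(a) = \dfrac{(t-a)^{\a-1}}{\Gamma(\a)}\cdot 0_{\R^n} + \I^{\a}_{a+}[\,\I^{1-\a}_{a+}[\dot x]\,](t)$ for a.e.\ $t$, so Proposition~\ref{propcompRL} yields $x - x(a) \in \AC^\alpha_{a+}$ with $\Dm[x-x(a)] = \I^{1-\a}_{a+}[\dot x]$ and $\I^{1-\a}_{a+}[x-x(a)](a) = 0_{\R^n}$. By Definition~\ref{defCaputoleft}, since $x \in \C$ and $x - x(a) \in \AC^\alpha_{a+}$, this is exactly the statement that $x \in {}_\cc \AC^\alpha_{a+}$ with $\CDm[x] = \Dm[x-x(a)] = \I^{1-\a}_{a+}[\dot x]$.

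The step I expect to require the most care is the bookkeeping with the $(t-a)^{\alpha-1}$ singular term and the uniqueness clause of Proposition~\ref{propcompRL}: one must verify that applying that proposition with $y = 0_{\R^n}$ is legitimate, i.e.\ that $\I^{\a}_{a+}[\I^{1-\a}_{a+}[\dot x]]$ is genuinely an $\I^{\a}_{a+}$ of an $\L^1$ function (which is clear from Proposition~\ref{prop1}) and that the representation is the one forcing $u = \Dm[x-x(a)]$. Everything else — the two endpoint cases, the rewriting of $x$ as $x(a)+\I^1_{a+}[\dot x]$, the invocation of the semigroup law — is routine. No delicate estimate is needed here because $\dot x \in \L^1$ already and no $\L^\infty$ hypothesis is in play, so the subtleties flagged in Remarks~\ref{rempropcompC2} and~\ref{rempropcompC} about continuity of $\I^{\a}_{a+}[u]$ do not arise: the continuity of $x$ is given from the start, being assumed $\AC$.
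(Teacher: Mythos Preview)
Your proposal is correct and is precisely what the paper intends: the text gives no detailed proof, merely asserting that the result ``can easily [be] recover[ed]'' from the preceding definitions and propositions, and your argument does exactly that via Proposition~\ref{prop5} (semigroup law), Proposition~\ref{prop1}, and Proposition~\ref{propcompRL}. One cosmetic point: you first write $\I^1_{a+}[\dot x] = \I^{1-\a}_{a+}\big[\I^{\a}_{a+}[\dot x]\big]$ and then silently switch to $\I^{\a}_{a+}\big[\I^{1-\a}_{a+}[\dot x]\big]$ when invoking Proposition~\ref{propcompRL}; both orderings are equal by Proposition~\ref{prop5}, but you should state the second form from the outset to keep the exposition clean.
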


\subsection{Right RL and Caputo fractional operators}

This section is devoted to the definitions of right fractional integrals and derivatives of RL and Caputo types.

\begin{definition}[Right RL fractional integral]\label{defRLright}
The right RL fractional integral $\Ip [x]$ of order $\a > 0$ of $x \in \L^1$ is defined on~$[a,b]$ by
$$ \Ip [x](t) := \int_t^b  \dfrac{(s-t)^{\a-1}}{\Gamma(\a)} x(s) \, ds, $$
provided that the right-hand side term exists. For $\alpha = 0$ we define $\I^0_{b-}[x] := x$.
\end{definition}

\begin{definition}[Right RL fractional derivative]
We say that $x \in \L^1$ possesses a right RL fractional derivative $\Dp [x]$ of order $0 \leq \a \leq 1$ if and only if $\I^{1-\a}_{b-} [x] \in \AC $. In that case~$\Dp [x] \in \L^1$ is defined by
$$ \Dp [x](t) := -\dfrac{d}{dt} \Big[ \I^{1-\a}_{b-} [x] \Big] (t) ,$$
for almost every $t \in [a,b]$. We denote by $\AC^\alpha_{b-} := \AC^\alpha_{b-}([a,b],\R^n)$ the set of all functions $x \in \L^1$ possessing a right RL fractional derivative $\Dp [x]$ of order $0 \leq \a \leq 1$.
\end{definition}

\begin{definition}[Right Caputo fractional derivative]\label{defCaputoright}
We say that $x \in \C$ possesses a right Caputo fractional derivative $\CDp [x]$ of order $0 \leq \a \leq 1$ if and only if $x-x(b) \in \AC^\a_{b-} $. In that case~$\CDp [x] \in \L^1$ is defined by
$$ \CDp [x](t) := \Dp [x-x(b)] (t) ,$$
for almost every $t \in [a,b]$. We denote by ${}_\cc \AC^\alpha_{b-} := {}_\cc \AC^\alpha_{b-}([a,b],\R^n)$ the set of all functions $x \in \C$ possessing a right Caputo fractional derivative $\CDp [x]$ of order $0 \leq \a \leq 1$.
\end{definition}

Each result stated in Section~\ref{secleft} (for left fractional operators) has a right-counterpart version. We refer the reader to~\cite{Kilbas,Samko} for details.

\section{Main results}\label{OP2}
In this work we focus on the constrained minimization problem
\begin{equation}\label{minproblem}\tag{P}
\argmin_{x \in \K} \; \LL(x),
\end{equation}
where $\LL$ is the general fractional functional of Bolza form given by
$$ \fonction{\LL}{{}_\cc \AC^{\a,\infty}_{a+}}{\R}{x}{\LL (x) := \varphi (x(a),x(b)) + \I^{\beta}_{a+} \left[ L(x,\CDm [x],\cdot) \right](b) ,} $$
where $0 < \a \leq 1$ and $\beta > 0$, where $\varphi : \R^n \times \R^n \to \R$ and $L : \R^n \times \R^n \times [a,b] \to \R$ are smooth of class~$\C^2$, where
$$ {}_\cc \AC^{\a,\infty}_{a+} := \{ x \in \CACm \mid \CDm [x] \in \L^\infty \}, $$
and where $\K \subset {}_\cc \AC^{\a,\infty}_{a+}$ stands for a general set of constraints. 

\begin{remark}
Consider the above definition of~$\LL(x)$ for some~$x \in {}_\cc \AC^{\a,\infty}_{a+}$. In this paper, in order to avoid heavy notations, we decided to use the notation~$\I^{\beta}_{a+} [ L(x,\CDm [x],\cdot) ](b)$ to stand for the value at~$t=b$ of the left~RL fractional integral of order~$\beta$ of the function~$t \mapsto L(x(t),\CDm [x](t),t)$. This notation has been preferred to the (heavier) notation~$\I^{\beta}_{a+} [ L(x(\cdot),\CDm [x](\cdot),\cdot) ](b)$, which can be found in the literature, in the purpose of lightening the notations used in the following statements, proofs and computations.
\end{remark}

\begin{remark}\label{remsuitable}
As opposed to $\AC^{\a}_{a+}$ (see Remark~\ref{remunsuitable}), note that the functional space $ {}_\cc \AC^{\a,\infty}_{a+}$ is suitable in order to define correctly the Bolza functional~$\LL$. Indeed, for all $x \in {}_\cc \AC^{\a,\infty}_{a+}$, the Mayer term $\varphi (x(a),x(b))$ is well-defined because $x \in \C$, and the Lagrange term $\I^{\beta}_{a+} [ L(x,\CDm [x],\cdot) ](b)$ is well-defined since $\I^{\beta}_{a+} [ L(x,\CDm [x],\cdot) ] \in \C$ because $L(x,\CDm [x],\cdot) \in \L^\infty$ (see Proposition~\ref{prop3}).
\end{remark}

\begin{remark}
Note that $\C^1 \subset \CACmi$ from Propositions~\ref{prop3} and~\ref{prop5bis}, while $\AC$ does not.
\end{remark}

Our aim in this paper is to derive first- and second-order necessary optimality conditions for Problem~\eqref{minproblem}. To this aim we first compute in Section~\ref{secgateaux} the first- and second-order G\^ateaux-differentials of~$\LL$. In Section~\ref{secwithout} we deal with the (easy) case where there is no constraint in Problem~\eqref{minproblem}, that is, the case $\K = {}_\cc \AC^{\a,\infty}_{a+}$ (see Theorem~\ref{thmmain1}). In Section~\ref{secobstruction} we discuss some obstructions arising in the fractional setting when considering final constraints in Problem~\eqref{minproblem} (such as fixed endpoint). In Section~\ref{secwith} we deal with the more general framework in which general mixed initial/final constraints are considered in Problem~\eqref{minproblem} (see Theorem~\ref{thm2}).

\medskip

Before coming to these points, we will use in the sequel the Lebesgue Dominated Convergence theorem at several occasions. It will be abbreviated by LDC theorem. Similarly, the Partial Converse of the Lebesgue Dominated Convergence theorem will be denoted by PCLDC theorem. 

\subsection{First- and second-order G\^ateaux-differentials of the Bolza functional}\label{secgateaux}
In this section we are concerned with the first- and second-order G\^ateaux-differentials of the Bolza functional~$\LL$.

\begin{definition}
The (first-order) \textit{G\^ateaux-differential} of~$\LL$ at some~$x \in {}_\cc \AC^{\a,\infty}_{a+}$ is defined by
$$ \mathcal{D}\LL (x)(\eta) := \lim\limits_{h \to 0} \dfrac{\LL(x+h\eta)-\LL(x)}{h}, $$
for all $\eta \in {}_\cc \AC^{\a,\infty}_{a+}$, provided that the right-hand side term exists. 
\end{definition}

\begin{proposition}\label{propfirstdifferential}
It holds that
\begin{multline}\label{eqfirstdifferential}
\mathcal{D}\LL (x)(\eta) = \partial_1 \varphi (x(a),x(b)) \cdot \eta (a) + \partial_2 \varphi (x(a),x(b)) \cdot \eta (b) \\[5pt]
+ \I^{\beta}_{a+} \Big[ \partial_1 L(x,\CDm [x],\cdot) \cdot \eta + \partial_2 L(x,\CDm [x],\cdot) \cdot \CDm [\eta] \Big](b),
\end{multline}
for all $x$, $\eta \in {}_\cc \AC^{\a,\infty}_{a+}$. 
\end{proposition}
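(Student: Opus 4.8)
The plan is to compute the difference quotient $h^{-1}\bigl(\LL(x+h\eta)-\LL(x)\bigr)$ explicitly, split it into its Mayer and Lagrange parts, and pass to the limit $h \to 0$ in each. First I would observe that $\CACmi$ is a vector space and that the Caputo operator $\CDm$ is linear, so that $x+h\eta \in \CACmi$ and $\CDm[x+h\eta] = \CDm[x] + h\,\CDm[\eta]$ for every $h \in \R$; in particular $\LL(x+h\eta)$ is well-defined (Remark~\ref{remsuitable}). For the Mayer term, since $\varphi$ is of class~$\C^2$ and hence differentiable, the chain rule immediately gives
$$\lim_{h \to 0} \frac{\varphi(x(a)+h\eta(a),\, x(b)+h\eta(b)) - \varphi(x(a),x(b))}{h} = \partial_1\varphi(x(a),x(b))\cdot\eta(a) + \partial_2\varphi(x(a),x(b))\cdot\eta(b).$$

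For the Lagrange term, the linearity of the RL fractional integral $\I^\beta_{a+}$ combined with the linearity of $\CDm$ yields, for $h \neq 0$,
$$\frac{\I^\beta_{a+}[L(x+h\eta,\CDm[x+h\eta],\cdot)](b) - \I^\beta_{a+}[L(x,\CDm[x],\cdot)](b)}{h} = \I^\beta_{a+}\bigl[\psi_h\bigr](b),$$
where $\psi_h(t) := h^{-1}\bigl(L(x(t)+h\eta(t),\, \CDm[x](t)+h\,\CDm[\eta](t),\, t) - L(x(t),\CDm[x](t),t)\bigr)$. Applying the fundamental theorem of calculus to $\theta \mapsto L(x(t)+h\theta\eta(t),\, \CDm[x](t)+h\theta\,\CDm[\eta](t),\, t)$ on $[0,1]$ rewrites $\psi_h(t) = \int_0^1 \bigl(\partial_1 L(\xi_h^\theta(t),t)\cdot\eta(t) + \partial_2 L(\xi_h^\theta(t),t)\cdot\CDm[\eta](t)\bigr)\,d\theta$, with $\xi_h^\theta(t) := (x(t)+h\theta\eta(t),\, \CDm[x](t)+h\theta\,\CDm[\eta](t))$. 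Since $x,\eta \in \C$ and $\CDm[x],\CDm[\eta] \in \L^\infty$, for $|h| \le 1$ the point $\xi_h^\theta(t)$ remains, for almost every $t \in [a,b]$ and every $\theta \in [0,1]$, inside a fixed compact set $\mathcal{K} \subset \R^n \times \R^n$ depending only on $\|x\|_\C + \|\eta\|_\C$ and $\|\CDm[x]\|_{\L^\infty} + \|\CDm[\eta]\|_{\L^\infty}$. As $\partial_1 L$ and $\partial_2 L$ are continuous, they are bounded on the compact set $\mathcal{K}\times[a,b]$, and therefore $\|\psi_h\|_{\L^\infty} \le C$ for some constant $C$ independent of $h$ with $|h| \le 1$.

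Finally I would pass to the limit inside the fractional integral evaluated at $t=b$. Writing $\I^\beta_{a+}[\psi_h](b) = \int_a^b \frac{(b-s)^{\beta-1}}{\Gamma(\beta)}\psi_h(s)\,ds$, the weight $s \mapsto \frac{(b-s)^{\beta-1}}{\Gamma(\beta)}$ lies in $\L^1([a,b])$ since $\beta > 0$, while the differentiability of $L$ gives the pointwise convergence $\psi_h(s) \to \partial_1 L(x(s),\CDm[x](s),s)\cdot\eta(s) + \partial_2 L(x(s),\CDm[x](s),s)\cdot\CDm[\eta](s)$ for almost every $s$ as $h \to 0$. Combined with the $h$-uniform bound $|\psi_h(s)| \le C$, the LDC theorem applies and gives $\I^\beta_{a+}[\psi_h](b) \to \I^\beta_{a+}[\partial_1 L(x,\CDm[x],\cdot)\cdot\eta + \partial_2 L(x,\CDm[x],\cdot)\cdot\CDm[\eta]](b)$. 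Adding the Mayer contribution produces exactly~\eqref{eqfirstdifferential}. The only genuine subtlety here is the construction of the $h$-uniform domination for $\psi_h$: this is precisely where the restriction to $\CACmi$ — i.e., $\CDm[x],\CDm[\eta] \in \L^\infty$ rather than merely $\L^1$ — is used, since otherwise the arguments of $\partial_i L$ need not stay in a compact set and the term $\partial_2 L(\xi_h^\theta(\cdot),\cdot)\cdot\CDm[\eta](\cdot)$ could fail to be dominated by an $\L^1$ function.
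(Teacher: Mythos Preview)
Your proof is correct and follows essentially the same approach as the paper's: the paper merely states that the result follows from a first-order Taylor expansion with integral rest of $L$ combined with the LDC theorem, and your integral representation $\psi_h(t) = \int_0^1(\cdots)\,d\theta$ together with the compactness-based domination argument is precisely a fleshed-out version of that sketch.
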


\begin{proof}
This result can be derived from a first-order Taylor expansion with integral rest of $L$ and from the LDC theorem.
\end{proof}

\begin{definition}
The \textit{second-order G\^ateaux-differential} of~$\LL$ at some~$x \in {}_\cc \AC^{\a,\infty}_{a+}$ is defined by
$$ \mathcal{D}^2\LL (x)(\eta) := \lim\limits_{h \to 0} \dfrac{\LL(x+h\eta)-\LL(x) - h \mathcal{D}\LL (x) ( \eta)}{ h^2 / 2 }, $$
for all $\eta \in {}_\cc \AC^{\a,\infty}_{a+}$, provided that the right-hand side term exists.
\end{definition}

\begin{proposition}
It holds that
\begin{multline}\label{eqseconddifferential}
\mathcal{D}\LL^2 (x)(\eta) = \eta (a)^\top \times A \times \eta (a) + 2 \eta (a)^\top \times B \times \eta (b)+ \eta(b)^\top \times C \times \eta (b) \\[5pt]
+ \I^{\beta}_{a+} \Big[ \eta^\top \times P \times \eta + 2 \eta^\top \times Q \times \CDm[\eta] + \CDm[\eta]^\top \times R \times \CDm[\eta] \Big](b),
\end{multline}
for all $x$, $\eta \in {}_\cc \AC^{\a,\infty}_{a+}$, where
\begin{eqnarray*}
A := \partial^2_{11} \varphi (x(a),x(b)), & B := \partial^2_{12} \varphi (x(a),x(b)), & C := \partial^2_{22} \varphi (x(a),x(b)), \\
P := \partial^2_{11} L (x, \CDm [x],\cdot), & Q := \partial^2_{12} L (x, \CDm [x],\cdot), & R := \partial^2_{22} L (x, \CDm [x],\cdot).
\end{eqnarray*}
\end{proposition}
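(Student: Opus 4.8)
The plan is to compute the second-order G\^ateaux-differential directly from its definition by performing a second-order Taylor expansion with integral remainder of the functions $\varphi$ and $L$ along the perturbation direction. First I would fix $x,\eta \in {}_\cc \AC^{\a,\infty}_{a+}$ and, for small $h \neq 0$, write the quantity $\LL(x+h\eta) - \LL(x) - h\,\mathcal{D}\LL(x)(\eta)$ by splitting it into the Mayer part and the Lagrange part. For the Mayer part, since $\varphi$ is of class $\C^2$, a second-order Taylor expansion of $\varphi$ at $(x(a),x(b))$ in the direction $(h\eta(a),h\eta(b))$ immediately produces the three quadratic-form terms involving $A = \partial^2_{11}\varphi$, $B = \partial^2_{12}\varphi$, $C = \partial^2_{22}\varphi$, times $h^2/2$, plus a remainder that is $o(h^2)$ once divided by $h^2/2$; this uses that $(x(a),x(b))$ is a fixed point of $\R^n \times \R^n$ and the continuity of the second derivatives of $\varphi$.

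Next, for the Lagrange term, I would use linearity of the RL fractional integral $\I^\beta_{a+}$ (which is legitimate because all integrands involved lie in $\L^\infty \subset \L^1$, so Proposition~\ref{prop1} and Proposition~\ref{prop3} apply and the value at $t=b$ is well-defined) to move the difference inside the integral, reducing the problem to a pointwise Taylor expansion of $t \mapsto L(x(t)+h\eta(t),\CDm[x](t)+h\CDm[\eta](t),t)$ with respect to $h$. Here I note that $\CDm[x+h\eta] = \CDm[x] + h\,\CDm[\eta]$ by linearity of the Caputo derivative, and that both $x$ and $\CDm[x]$ are in $\L^\infty$, as are $\eta$ and $\CDm[\eta]$, since everything lives in ${}_\cc \AC^{\a,\infty}_{a+}$. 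A second-order Taylor expansion with integral remainder of $L$ in its first two (vector) arguments yields, after subtracting the zeroth- and first-order terms that cancel against $\LL(x)$ and $h\,\mathcal{D}\LL(x)(\eta)$ respectively (using the formula~\eqref{eqfirstdifferential} from Proposition~\ref{propfirstdifferential}), exactly the $h^2/2$ multiple of the integrand $\eta^\top P \eta + 2\eta^\top Q \CDm[\eta] + \CDm[\eta]^\top R \CDm[\eta]$, plus a remainder integrand.

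The remaining work is to pass to the limit $h \to 0$. After dividing by $h^2/2$, the Mayer remainder tends to $0$ by smoothness of $\varphi$. For the Lagrange remainder, I would write the integral remainder of the Taylor expansion as an integral over $[0,1]$ of second derivatives of $L$ evaluated at intermediate points $(x(t)+\theta h\eta(t),\CDm[x](t)+\theta h\CDm[\eta](t),t)$ minus the values at $\theta = 0$, weighted by $\eta(t)$ and $\CDm[\eta](t)$; applying $\I^\beta_{a+}$ and evaluating at $b$, I would invoke the LDC theorem to conclude that this term vanishes as $h \to 0$. The domination is provided by the uniform bound on $L$ and its second derivatives on the compact set swept out by $\{(x(t)+\theta h\eta(t),\CDm[x](t)+\theta h\CDm[\eta](t),t) : t \in [a,b],\ \theta \in [0,1],\ |h| \le 1\}$ — which is bounded precisely because $x$, $\eta$, $\CDm[x]$, $\CDm[\eta]$ are all essentially bounded — together with the $\L^1$ (indeed $\L^\infty$) integrability of $|\eta|^2$, $|\eta||\CDm[\eta]|$ and $|\CDm[\eta]|^2$ against the kernel $(b-s)^{\beta-1}/\Gamma(\beta)$.

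The main obstacle is the bookkeeping in the Lagrange term: one must carefully verify that the zeroth- and first-order Taylor contributions, after the action of $\I^\beta_{a+}$, reproduce exactly $\LL(x)$ and $h\,\mathcal{D}\LL(x)(\eta)$ so that they cancel, and then confirm that the second-derivative terms assemble into the three matrices $P,Q,R$ with the stated symmetry (so that, e.g., the cross terms combine into $2\eta^\top Q \CDm[\eta]$). This is essentially the same mechanism as in the proof of Proposition~\ref{propfirstdifferential}, but carried one Taylor order further; the fractional aspect enters only through the linearity of $\I^\beta_{a+}$ and $\CDm$ and through the boundedness that makes the LDC theorem applicable, so no genuinely new fractional difficulty arises here beyond what was already handled for the first-order differential.
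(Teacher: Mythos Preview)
Your proposal is correct and follows exactly the same approach as the paper, which simply states that the result follows from a second-order Taylor expansion with integral remainder of $L$ together with the LDC theorem. You have supplied considerably more detail than the paper's one-line sketch, but the underlying strategy---Taylor-expand $\varphi$ and $L$ to second order, cancel the zeroth- and first-order contributions against $\LL(x)$ and $h\,\mathcal{D}\LL(x)(\eta)$, and justify the limit via dominated convergence using the $\L^\infty$ bounds afforded by ${}_\cc \AC^{\a,\infty}_{a+}$---is identical.
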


\begin{proof}
This result can be proved using a second-order Taylor expansion with integral rest of~$L$ and the LDC theorem.
\end{proof}

Equalities~\eqref{eqfirstdifferential} and~\eqref{eqseconddifferential} are required in order to prove our main result in Section~\ref{secwithout} (see the proof of Theorem~\ref{thmmain1}).

\subsection{The case without constraints}\label{secwithout}
This section is dedicated to the (easy) case where there is no constraint in Problem~\eqref{minproblem}, that is, the case $\K = {}_\cc \AC^{\a,\infty}_{a+}$. The main result of this section (see Theorem~\ref{thmmain1} below) is based on the following series of lemmas. Note that the first lemma (Lemma~\ref{lem1}) is a well-known result that can be found in the literature (see, e.g., \cite[Lemma 15.1 p.50]{hestenes}) under different presentations and/or under different names (such as \textit{Du Bois-Reymond lemma} or \textit{fundamental lemma of the calculus of variations}). The short proof of Lemma~\ref{lem1} is recalled here for the reader's convenience.

\begin{lemma}\label{lem1}
Let $x_1$, $x_2 \in \L^1$. If
$$ \I^1_{a+} \Big[ x_1 \cdot \eta +  x_2 \cdot \dot{\eta} \Big] (b) = 0,  $$
for all $\eta \in \C^\infty_\cc$, then $x_2 \in \AC$ (precisely, it admits an absolutely continuous representative) with~$\dot{x_2}(t)=x_1 (t)$ for almost every $t \in [a,b]$.
\end{lemma}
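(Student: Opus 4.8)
The plan is to prove this via the classical Du Bois-Reymond argument, adapted to the fractional integral notation. First I would unravel what the hypothesis actually says: by definition of $\I^1_{a+}$ at the point $b$, the condition
$$ \I^1_{a+}\Big[ x_1 \cdot \eta + x_2 \cdot \dot\eta \Big](b) = \int_a^b \big( x_1(s) \cdot \eta(s) + x_2(s) \cdot \dot\eta(s) \big)\, ds = 0 $$
for all $\eta \in \C^\infty_\cc$. So the statement is really the standard claim: if $\int_a^b (x_1 \cdot \eta + x_2 \cdot \dot\eta) = 0$ for every compactly supported smooth test function, then $x_2$ has an absolutely continuous representative with $\dot{x_2} = x_1$ a.e.

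The key steps, in order, would be: (1) Introduce a primitive of $x_1$, namely $X_1(t) := \I^1_{a+}[x_1](t) = \int_a^t x_1(s)\, ds$, which lies in $\AC$ with $\dot X_1 = x_1$ a.e. (2) Integrate by parts in the term $\int_a^b x_1 \cdot \eta$: since $\eta$ vanishes near $a$ and $b$, $\int_a^b x_1 \cdot \eta = [X_1 \cdot \eta]_a^b - \int_a^b X_1 \cdot \dot\eta = -\int_a^b X_1 \cdot \dot\eta$. Substituting back, the hypothesis becomes $\int_a^b (x_2 - X_1) \cdot \dot\eta \, ds = 0$ for all $\eta \in \C^\infty_\cc$. (3) Apply the classical lemma that a function $w \in \L^1$ with $\int_a^b w \cdot \dot\eta = 0$ for all $\eta \in \C^\infty_\cc$ must be a.e. equal to a constant vector $c \in \R^n$ — this is proved by choosing $\eta(t) = \int_a^t (\psi(s) - \lambda \phi(s))\, ds$ where $\phi \in \C^\infty_\cc$ is fixed with $\int \phi = 1$, $\psi$ ranges over $\C^\infty_\cc$, and $\lambda = \int \psi$, so that $\dot\eta = \psi - \lambda\phi$ is a legitimate test-function derivative with $\eta \in \C^\infty_\cc$; this forces $\int_a^b (w - (\int_a^b w \phi)) \psi = 0$ for all $\psi$, hence $w$ equals its weighted average a.e. (4) Conclude: $x_2 - X_1 = c$ a.e., so $x_2$ coincides a.e. with $X_1 + c \in \AC$, and $\dot{x_2}(t) = \dot X_1(t) = x_1(t)$ for almost every $t$.

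The only genuine subtlety — and the step I would be most careful about — is step (3), specifically checking that the constructed $\eta$ genuinely lies in $\C^\infty_\cc$: it is smooth because it is an integral of a smooth compactly supported function, it vanishes at $a$ by construction, and it vanishes at (and near) $b$ precisely because $\int_a^b \dot\eta = \int_a^b \psi - \lambda \int_a^b \phi = \lambda - \lambda = 0$ together with the fact that $\dot\eta$ is supported in a compact subset of $(a,b)$; this last point guarantees the support of $\eta$ itself is contained in a compact subset of $(a,b)$. Everything else is routine: the integration by parts in step (2) is valid because $X_1 \in \AC$ and $\eta \in \C^1$, and the boundary terms drop out since $\eta$ has compact support in $(a,b)$. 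No fractional-calculus machinery beyond the mere definition of $\I^1_{a+}$ is needed here, since $\I^1_{a+}[\cdot](b)$ is just the ordinary integral over $[a,b]$; the lemma is stated in this form only to match the notation used elsewhere in the paper.
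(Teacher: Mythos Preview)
Your proposal is correct and follows essentially the same route as the paper: introduce the primitive $X_1$, integrate by parts to reduce to $\int_a^b (x_2 - X_1)\cdot\dot\eta = 0$ for all $\eta\in\C^\infty_\cc$, and conclude that $x_2 - X_1$ is a.e.\ constant. The only difference is cosmetic: the paper compresses your step~(3) into a one-line appeal to ``classical distribution theory,'' whereas you spell out the standard Du Bois-Reymond construction of the test function explicitly.
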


\begin{proof}
A classical integration by parts formula leads to
$$ \I^1_{a+} \Big[ x_1 \cdot \eta +  x_2 \cdot \dot{\eta} \Big] (b) = \di \int_a^b x_1(s) \cdot \eta (s) + x_2 (s) \cdot \dot{\eta}(s) \, ds =  \di \int_a^b ( x_2 (s) - X_1(s) ) \cdot \dot{\eta} (s) \, ds = 0, $$
for all $\eta \in \C^\infty_\cc$, where $X_1 \in \AC$ is defined by $X_1(t) := \int_a^t x_1 (s) \, ds$ for all $t \in [a,b]$. From the classical distribution theory, we deduce that there exists a constant $y \in \R^n$ such that $x_2 (t) - X_1(t) = y$ for almost every $t \in [a,b]$ which concludes the proof.
\end{proof}

\begin{lemma}\label{lem2}
Let $0 < \a \leq 1$ and $\beta > 0$. Then:
\begin{enumerate}
\item[\rm{(i)}] The equality
$$ \I^{\beta}_{a+} [x](b) = \I^{\gamma}_{a+} \left[ \dfrac{\Gamma(\gamma)}{\Gamma(\beta)} (b-\cdot)^{\beta - \gamma} x \right](b), $$
is satisfied for all $x \in \L^\infty$ and all $0 < \gamma < \beta + 1$ (in particular for $\gamma = 1$);
\item[\rm{(ii)}] The fractional integration by parts
$$ \I^{\beta}_{a+} \left[ \I^{1-\a}_{a+} [x_1] \cdot x_2 \right](b) =  \I^{1}_{a+} \left[  x_1 \cdot \I^{1-\a}_{b-} \left[ \frac{(b-\cdot)^{\beta-1}}{\Gamma(\beta)}  x_2 \right] \right](b),  $$
holds true for all $x_1$, $x_2 \in \L^\infty$.
\end{enumerate}
\end{lemma}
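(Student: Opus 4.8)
The plan is to prove the two claims separately, each by reducing an iterated fractional integral evaluated at $t=b$ to an ordinary integral over $[a,b]$ and then recognizing the result.

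For item (i), I would start from the definition
$$ \I^{\beta}_{a+}[x](b) = \int_a^b \frac{(b-s)^{\beta-1}}{\Gamma(\beta)}\, x(s)\, ds. $$
On the right-hand side, write $\I^{\gamma}_{a+}\big[ \tfrac{\Gamma(\gamma)}{\Gamma(\beta)}(b-\cdot)^{\beta-\gamma} x\big](b) = \int_a^b \tfrac{(b-s)^{\gamma-1}}{\Gamma(\gamma)}\cdot \tfrac{\Gamma(\gamma)}{\Gamma(\beta)}(b-s)^{\beta-\gamma} x(s)\, ds = \int_a^b \tfrac{(b-s)^{\beta-1}}{\Gamma(\beta)} x(s)\, ds$, and these coincide. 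The only point requiring care is integrability: the exponent $\gamma-1$ of $(b-s)$ appearing inside $\I^{\gamma}_{a+}$ must be $>-1$, i.e. $\gamma>0$, and the combined exponent is $\beta-1>-1$; the bound $\gamma<\beta+1$ ensures $\beta-\gamma>-1$ so that $(b-\cdot)^{\beta-\gamma}x \in \L^1$ (indeed $\L^1$ suffices here, using $x\in\L^\infty$), so that $\I^{\gamma}_{a+}$ of it is well-defined by Proposition~\ref{prop1}. I would also note that $x\in\L^\infty$ guarantees all these integrals are absolutely convergent, and mention the special case $\gamma=1$ explicitly since that is what is used later.

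For item (ii), the strategy is to rewrite both sides as plain integrals and compare via Fubini. Using item (i) with $\gamma=1$ on the left-hand side,
$$ \I^{\beta}_{a+}\big[\I^{1-\a}_{a+}[x_1]\cdot x_2\big](b) = \int_a^b \frac{(b-s)^{\beta-1}}{\Gamma(\beta)}\Big(\I^{1-\a}_{a+}[x_1](s)\Big)\cdot x_2(s)\, ds = \I^1_{a+}\Big[\I^{1-\a}_{a+}[x_1]\cdot \tfrac{(b-\cdot)^{\beta-1}}{\Gamma(\beta)} x_2\Big](b). $$
Now I expand $\I^{1-\a}_{a+}[x_1](s) = \int_a^s \tfrac{(s-\tau)^{-\a}}{\Gamma(1-\a)} x_1(\tau)\, d\tau$ and apply Fubini's theorem on the triangle $a\le \tau \le s \le b$ to move the $x_1(\tau)$ integral outside; the inner integral over $s\in[\tau,b]$ of $\tfrac{(s-\tau)^{-\a}}{\Gamma(1-\a)}\cdot\tfrac{(b-s)^{\beta-1}}{\Gamma(\beta)} x_2(s)\, ds$ is exactly $\I^{1-\a}_{b-}\big[\tfrac{(b-\cdot)^{\beta-1}}{\Gamma(\beta)} x_2\big](\tau)$ by Definition~\ref{defRLright}. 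This yields $\int_a^b x_1(\tau)\cdot \I^{1-\a}_{b-}\big[\tfrac{(b-\cdot)^{\beta-1}}{\Gamma(\beta)} x_2\big](\tau)\, d\tau$, which is the claimed right-hand side $\I^1_{a+}\big[x_1 \cdot \I^{1-\a}_{b-}[\tfrac{(b-\cdot)^{\beta-1}}{\Gamma(\beta)} x_2]\big](b)$.

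The main obstacle is justifying the application of Fubini's theorem, i.e. the absolute integrability of $(\tau,s)\mapsto (s-\tau)^{-\a}(b-s)^{\beta-1} x_1(\tau) x_2(s)$ on the triangle. Since $x_1,x_2\in\L^\infty$, this reduces to checking $\int_a^b\int_a^s (s-\tau)^{-\a}(b-s)^{\beta-1}\, d\tau\, ds < \infty$; the inner integral equals $\tfrac{(s-a)^{1-\a}}{1-\a}$ (or is handled directly when $\a=1$), and then $\int_a^b (s-a)^{1-\a}(b-s)^{\beta-1}\, ds$ is a Beta-type integral that is finite because $1-\a > -1$ and $\beta-1 > -1$. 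I would also remark that the edge case $\a=1$ makes $\I^{1-\a}_{a+}$ and $\I^{1-\a}_{b-}$ the identity, so the formula degenerates to the classical integration by parts identity $\I^{\beta}_{a+}[x_1\cdot x_2](b) = \I^1_{a+}[x_1\cdot \tfrac{(b-\cdot)^{\beta-1}}{\Gamma(\beta)} x_2](b)$, which is item (i) again, so no separate argument is needed there.
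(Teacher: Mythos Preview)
Your proposal is correct and follows exactly the approach of the paper, which simply states that item~(i) is obvious and that item~(ii) is a consequence of the classical Fubini theorem. You have merely expanded these two sentences into a full computation, including the integrability checks needed to justify Fubini and the well-definedness of~$\I^{\gamma}_{a+}$ applied to $(b-\cdot)^{\beta-\gamma}x$, which is appropriate and entirely in line with the paper's intent.
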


\begin{proof}
The first item is obvious. The second item is a simple consequence of the classical Fubini theorem.
\end{proof}

\begin{lemma}\label{lem30}
Let $0 < \a \leq 1$. The inequality
\begin{equation*}
0 \leq ( s_2^\alpha - s_1^\alpha )^2 \leq \alpha (s_2-s_1)^{\alpha+1} s_1^{\alpha-1},
\end{equation*}
holds true for all $0 < s_1 \leq s_2$.
\end{lemma}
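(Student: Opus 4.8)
The plan is to dispose of the left inequality at once — it is the square of a real number — and to reduce the right inequality to a one-variable statement by homogeneity. Setting $r := (s_2-s_1)/s_1 \ge 0$ and dividing the claimed inequality through by $s_1^{2\alpha}$, one checks that, since $s_2^\alpha/s_1^\alpha = (s_2/s_1)^\alpha$ and $s_1^{\alpha-1}(s_2-s_1)^{\alpha+1}/s_1^{2\alpha} = r^{\alpha+1}$, the right inequality is equivalent to
$$ \big( (1+r)^\alpha - 1 \big)^2 \le \alpha\, r^{\alpha+1} \qquad \text{for all } r \ge 0. $$
So it suffices to prove this scalar inequality, and I would phrase the rest in terms of the function $g(r) := \big((1+r)^\alpha-1\big)^2$.

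Next I would record two elementary upper bounds for $s_2^\alpha - s_1^\alpha$, which is nonnegative by monotonicity of $s \mapsto s^\alpha$. First, concavity of $s \mapsto s^\alpha$ on $(0,\infty)$ puts the curve below its tangent at $s_1$, giving $s_2^\alpha - s_1^\alpha \le \alpha s_1^{\alpha-1}(s_2-s_1)$, hence $(s_2^\alpha-s_1^\alpha)^2 \le \alpha^2 s_1^{2\alpha-2}(s_2-s_1)^2$, i.e.\ $g(r) \le \alpha^2 r^2$ after normalization. Second, concavity together with $0^\alpha = 0$ makes $s \mapsto s^\alpha$ subadditive, so $s_2^\alpha = \big(s_1+(s_2-s_1)\big)^\alpha \le s_1^\alpha + (s_2-s_1)^\alpha$, giving $s_2^\alpha - s_1^\alpha \le (s_2-s_1)^\alpha$ and thus $(s_2^\alpha-s_1^\alpha)^2 \le (s_2-s_1)^{2\alpha}$, i.e.\ $g(r) \le r^{2\alpha}$.

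Then I would combine the two bounds by a case split. The first bound yields the target $g(r) \le \alpha r^{\alpha+1}$ precisely when $\alpha r^{1-\alpha} \le 1$, i.e.\ (since $1-\alpha > 0$) when $r \le r_\star := \alpha^{-1/(1-\alpha)}$; the second bound yields it precisely when $r^{\alpha-1} \le \alpha$, i.e.\ (since $\alpha-1 < 0$) when $r \ge r_\star$, the very same threshold. Hence for every $r \ge 0$ at least one of the two bounds applies, which proves the scalar inequality for $0 < \alpha < 1$. The borderline case $\alpha = 1$ is immediate, since there the tangent-line bound is an equality: $(s_2-s_1)^2 = 1\cdot(s_2-s_1)^2$.

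The only genuine subtlety is that neither elementary bound is strong enough by itself — the tangent-line bound is too weak for large $r$, the subadditivity bound too weak for small $r$ — so the whole argument hinges on the pleasant coincidence that the two ranges of validity, $r \le r_\star$ and $r \ge r_\star$, meet exactly at the common value $r_\star = \alpha^{-1/(1-\alpha)}$ and therefore exhaust $[0,\infty)$. Carefully verifying this threshold computation, keeping track of the signs of $1-\alpha$ and $\alpha-1$ and of the monotonicity directions they impose, is the one place where attention is required.
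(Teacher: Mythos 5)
Your proof is correct, and it rests on exactly the same two elementary bounds as the paper's: the subadditivity estimate $s_2^\alpha - s_1^\alpha \leq (s_2-s_1)^\alpha$ and the tangent/mean-value estimate $s_2^\alpha - s_1^\alpha \leq \alpha (s_2-s_1) s_1^{\alpha-1}$. The difference is in how you combine them. You square each bound separately and then run a case split on $r=(s_2-s_1)/s_1$, which forces you to verify that the two ranges of validity meet exactly at $r_\star = \alpha^{-1/(1-\alpha)}$. The paper instead writes the square as a product of the two \emph{distinct} upper bounds on the same nonnegative quantity,
$$ ( s_2^\alpha - s_1^\alpha )^2 \leq (s_2-s_1)^\alpha \cdot \alpha (s_2-s_1) s_1^{\alpha-1} = \alpha (s_2-s_1)^{\alpha+1} s_1^{\alpha-1}, $$
which finishes in one line with no threshold computation. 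This also demystifies the ``pleasant coincidence'' you flag at the end: the target $\alpha r^{\alpha+1}$ is precisely the geometric mean of your two squared bounds $\alpha^2 r^2$ and $r^{2\alpha}$, so the minimum of the two always lies below it and the two validity intervals are guaranteed to abut. Your argument is a valid (if slightly longer) route to the same conclusion; the only point needing care, as you note, is the sign bookkeeping in the threshold computation, and you have handled it correctly, including the degenerate case $\alpha=1$.
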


\begin{proof}
Lemma~\ref{lem30} directly follows from the inequalities
\begin{equation*}
0 \leq s_2^\alpha - s_1^\alpha \leq (s_2-s_1)^\alpha \quad \text{and} \quad 0 \leq s_2^\alpha - s_1^\alpha \leq \alpha (s_2-s_1) s_1^{\alpha-1},
\end{equation*}
both holding for all $0 < s_1 \leq s_2$. The first inequality can be easily obtained by studying the real function $f(s) := (s-1)^{\a} -s^\a + 1$ for $s \geq 1$. The second inequality can be obtained by the mean value theorem.
\end{proof}

\begin{lemma}\label{lem3}
Let $0 < \a \leq 1$ and $\beta > 0$. Let $A$, $B$, $C \in \R^{n \times n}$ be three matrices and let $P$, $Q$, $R \in \L^\infty([a,b],\R^{n \times n})$ be three essentially bounded matrix functions. If it holds that
\begin{multline*}
\eta (a)^\top \times A \times \eta (a) + 2 \eta (a)^\top \times B \times \eta (b)+ \eta(b)^\top \times C \times \eta (b) \\[5pt]
+ \I^{\beta}_{a+} \Big[ \eta^\top \times P \times \eta + 2 \eta^\top \times Q \times \CDm[\eta] + \CDm[\eta]^\top \times R \times \CDm[\eta] \Big](b) \geq 0,
\end{multline*}
for all $\eta \in {}_\cc \AC^{\a,\infty}_{a+}$, then $\frac{(b-t)^{\beta-1}}{\Gamma(\beta)} R(t)$ is positive semi-definite for almost every $t \in [a,b]$.
\end{lemma}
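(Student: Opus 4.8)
The plan is to isolate the coefficient $R(t)$ by testing the quadratic form against a cleverly chosen one-parameter family of variations $\eta$ that are highly oscillatory and localized near a chosen point $\tau \in (a,b)$, so that in the limit only the term involving $\CDm[\eta]^\top \times R \times \CDm[\eta]$ survives and the boundary terms and the lower-order terms $P$, $Q$ disappear. Concretely, fix a Lebesgue point $\tau$ of the locally integrable matrix function $t \mapsto \frac{(b-t)^{\beta-1}}{\Gamma(\beta)} R(t)$ (away from $a$ and $b$), fix an arbitrary vector $v \in \R^n$, and for small $\eps > 0$ consider a scaled "triangular" or "sawtooth" profile supported in $[\tau, \tau+\eps]$: something like $\eta_\eps(t) = v\, \psi((t-\tau)/\eps)$ for a fixed Lipschitz $\psi$ supported in $[0,1]$, possibly with an additional amplitude normalization. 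The key point is that for $\alpha < 1$ the Caputo derivative of such a profile does not scale the way a classical derivative does, so the correct normalization has to be worked out using Lemma~\ref{lem30}.

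First I would reduce to estimating the three integral terms. For the $\CDm[\eta]^\top \times R \times \CDm[\eta]$ term I would use Lemma~\ref{lem2}(i) to rewrite $\I^\beta_{a+}[\,\cdot\,](b)$ as $\I^1_{a+}\big[\frac{(b-\cdot)^{\beta-1}}{\Gamma(\beta)}(\cdot)\big](b)$, i.e. as an ordinary integral $\int_a^b \frac{(b-s)^{\beta-1}}{\Gamma(\beta)}\CDm[\eta](s)^\top R(s) \CDm[\eta](s)\,ds$, and then exploit the Lebesgue-point property of $\frac{(b-s)^{\beta-1}}{\Gamma(\beta)}R(s)$ at $\tau$: on a shrinking interval $[\tau,\tau+\eps]$ this coefficient is essentially the constant matrix $\frac{(b-\tau)^{\beta-1}}{\Gamma(\beta)}R(\tau)$. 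So the main integral term behaves like $\big(\frac{(b-\tau)^{\beta-1}}{\Gamma(\beta)}\big)\,v^\top R(\tau) v$ times $\int_\tau^{\tau+\eps} |\CDm[\eta_\eps](s)|^2\,ds$ (up to scalar factors), and I need this last quantity to dominate the other terms after normalization. For the cross term $2\eta^\top \times Q \times \CDm[\eta]$ and the term $\eta^\top \times P \times \eta$ I would bound them by $C\int |\eta_\eps||\CDm[\eta_\eps]| $ and $C\int|\eta_\eps|^2$ respectively (using $P,Q \in \L^\infty$ and the boundedness of $(b-\cdot)^{\beta-1}$ near $\tau$), and show these are of strictly smaller order than $\int|\CDm[\eta_\eps]|^2$ as $\eps \to 0$; since $\eta_\eps$ is uniformly $O(\|\psi\|_\infty)$ in sup norm and supported on an interval of length $\eps$, the $P$-term is $O(\eps)$ while I will arrange the $\CDm[\eta]$-term to be bounded below by a positive constant. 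The boundary terms vanish because $\tau > a$ forces $\eta_\eps(a) = 0$ and (for $\eps$ small) $\eta_\eps(b) = 0$.

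The hard part, and the reason Lemma~\ref{lem30} is invoked, will be the quantitative control of $\CDm[\eta_\eps]$ for a compactly-supported-away-from-$a$ bump — two-sided: I need $\int_\tau^{\tau+\eps}|\CDm[\eta_\eps]|^2$ bounded below uniformly (so the $R$-term does not also vanish) while simultaneously controlling the contribution of $\CDm[\eta_\eps]$ outside $[\tau,\tau+\eps]$, where the Caputo derivative of a compactly supported bump has a nonzero tail (this is precisely the nonlocality phenomenon that underlies Proposition~\ref{prop765}). Here Lemma~\ref{lem30} gives the pointwise bound $(s_2^\alpha - s_1^\alpha)^2 \le \alpha(s_2-s_1)^{\alpha+1}s_1^{\alpha-1}$, which is exactly what is needed to estimate $|\I^{1-\alpha}_{a+}[\dot\eta_\eps]|$-type expressions and to show that the tail of $\CDm[\eta_\eps]$ contributes a lower-order amount to $\int_a^b \frac{(b-s)^{\beta-1}}{\Gamma(\beta)}|\CDm[\eta_\eps](s)|^2 ds$ compared to the main part on $[\tau,\tau+\eps]$. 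I would therefore choose $\eta_\eps$ of the form $\eta_\eps = \Im[u_\eps]\cdot v$ (using Proposition~\ref{propcompC}, so that $\CDm[\eta_\eps] = u_\eps v$ exactly) with $u_\eps$ a rescaled fixed $\L^\infty$ profile $u_\eps(s) = \varrho((s-\tau)/\eps)$ supported in $[\tau,\tau+\eps]$ — this way $\CDm[\eta_\eps]$ IS the nice bump and the delicate object becomes $\eta_\eps$ itself, whose sup norm I must show is small (of order a positive power of $\eps$) using the explicit formula $\eta_\eps(t) = \int_\tau^{\min(t,\tau+\eps)} \frac{(t-s)^{\alpha-1}}{\Gamma(\alpha)}\varrho((s-\tau)/\eps)\,v\,ds$; a direct estimate gives $\|\eta_\eps\|_\C = O(\eps^\alpha)$. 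Putting the pieces together: dividing the whole inequality by $\int_a^b \frac{(b-s)^{\beta-1}}{\Gamma(\beta)}|u_\eps(s)|^2 ds \asymp \eps$ (with the fixed direction $v$) and letting $\eps \to 0$ leaves exactly $\frac{(b-\tau)^{\beta-1}}{\Gamma(\beta)} v^\top R(\tau) v \ge 0$; since $\tau$ ranges over a full-measure set and $v$ is arbitrary, this yields the claim.
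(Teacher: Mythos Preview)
Your approach is essentially the paper's: fix a Lebesgue point~$\tau$ of $\frac{(b-\cdot)^{\beta-1}}{\Gamma(\beta)}R$, take $\eta_\eps=\Im[u_\eps]\,v$ with $u_\eps$ a bump supported in $[\tau,\tau+\eps]$ so that $\CDm[\eta_\eps]=u_\eps v$ exactly, divide by~$\eps$, and pass to the limit. But the argument as written has a genuine gap.

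The problem is your treatment of the terms involving $\eta_\eps$ away from $[\tau,\tau+\eps]$. You assert that $\eta_\eps(b)=0$ and that $\eta_\eps$ is supported on an interval of length~$\eps$; both are false for the choice $\eta_\eps=\Im[u_\eps]\,v$ (indeed Proposition~\ref{prop765} rules out compact support of both $\eta_\eps$ and $\CDm[\eta_\eps]$). What you actually have is the explicit tail
\[
\eta_\eps(t)=\frac{v}{\Gamma(1+\alpha)}\Big((t-\tau)^\alpha-(t-(\tau+\eps))^\alpha\Big),\qquad t\in[\tau+\eps,b],
\]
and the only global estimate you record is $\|\eta_\eps\|_\C=O(\eps^\alpha)$. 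Plugging that into the boundary term $\eta_\eps(b)^\top C\,\eta_\eps(b)$ and into $\int_{\tau+\eps}^{b}\frac{(b-s)^{\beta-1}}{\Gamma(\beta)}\eta_\eps^\top P\,\eta_\eps$ gives, after dividing by~$\eps$, only $O(\eps^{2\alpha-1})$, which does \emph{not} tend to zero when $0<\alpha\le\tfrac12$. So the proof breaks exactly in the genuinely fractional regime.

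The fix is to use Lemma~\ref{lem30} on the tail of $\eta_\eps$ itself (not on the tail of $\CDm[\eta_\eps]$, which in your final construction does not exist). Lemma~\ref{lem30} gives
\[
\Big((t-\tau)^\alpha-(t-(\tau+\eps))^\alpha\Big)^2\le \alpha\,\eps^{\alpha+1}\,(t-(\tau+\eps))^{\alpha-1},
\]
so that $|\eta_\eps(b)|^2/\eps=O(\eps^\alpha)\to0$, and
\[
\frac{1}{\eps}\int_{\tau+\eps}^{b}\frac{(b-s)^{\beta-1}}{\Gamma(\beta)}|\eta_\eps(s)|^2\,ds
\;\lesssim\;\eps^\alpha\int_{\tau+\eps}^{b}(b-s)^{\beta-1}(s-(\tau+\eps))^{\alpha-1}\,ds
= \eps^\alpha\,\frac{\Gamma(\alpha)}{\Gamma(\alpha+\beta)}(b-(\tau+\eps))^{\alpha+\beta-1}\to 0.
\]
With these two sharper estimates in place (together with your correct treatment of the $Q$-term and the $P$-integral on $[\tau,\tau+\eps]$), the limit argument goes through for all $0<\alpha\le1$, exactly as in the paper. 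In short: you identified the right variation and the right lemma, but applied the lemma to the wrong object; redirect it to the tail of $\eta_\eps$ and the gap closes.
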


\begin{proof}
Let us consider a Lebesgue point $\t \in (a,b)$ of the matrix function $\frac{(b-\cdot)^{\beta-1}}{\Gamma(\beta)} R \in \L^1([a,b],\R^{n \times n})$ and let~$v \in \R^n$. Our aim is to prove that $ \frac{(b-\t)^{\beta-1}}{\Gamma(\beta)} v^\top \times R(\t) \times v \geq 0$. To this aim we consider~$\eta := \Im [ \nu ]$ where~$\nu \in \L^\infty$ is defined by
$$ \nu (s) := \left\lbrace 
\begin{array}{lcl}
0_{\R^n} & \text{on} & [a,\t), \\
v & \text{on} & [\t,\t+h), \\
0_{\R^n} & \text{on} & [\t+h,b], 
\end{array}
\right. $$
for all $s \in [a,b]$ and for some small $0 < h \leq b-\t$. One can easily see that $\eta \in {}_\cc \AC^{\a,\infty}_{a+}$ with~$\CDm [\eta] = \nu$. Moreover it can be computed that
\begin{equation}\label{eqguo}
\eta (t)  := \left\lbrace 
\begin{array}{lcl}
0_{\R^n} & \text{on} & [a,\t], \\[8pt]
\dfrac{1}{\Gamma(1+\a)} (t-\t)^\a v & \text{on} & [\t,\t+h], \\[8pt]
\dfrac{1}{\Gamma(1+\a)} \Big( (t-\t)^\a - (t-(\t+h))^\a \Big) v & \text{on} & [\t+h,b] ,
\end{array} \right.
\end{equation}
for all $t \in [a,b]$.  We get from the inequality hypothesis that
\begin{multline*}
\dfrac{1}{h} \left( \eta(b)^\top \times C \times \eta (b) + \di \int_{\t}^{\t+h} \dfrac{(b-s)^{\beta-1}}{\Gamma(\beta)} v^\top \times R(s) \times v \, ds  \right. \\[5pt]
+ \int_{\t}^{\t+h} \dfrac{(b-s)^{\beta-1}}{\Gamma(\beta)} 2 \eta(s)^\top \times Q(s) \times v \, ds  \\[5pt]
\left. +  \int_{\t}^{\t+h} \dfrac{(b-s)^{\beta-1}}{\Gamma(\beta)} \eta(s)^\top \times P(s) \times \eta(s) \, ds + \int_{\t+h}^b \dfrac{(b-s)^{\beta-1}}{\Gamma(\beta)} \eta(s)^\top \times P(s) \times \eta(s) \, ds \right) \geq 0.
\end{multline*}
In the sequel we are concerned with the limit of the five above terms when $h \to 0^+$. From Lemma~\ref{lem30}, the first term can be bounded as follows
\begin{multline*}
\left\vert \dfrac{1}{h} \eta(b)^\top \times C \times \eta (b) \right\vert \leq \dfrac{\Vert C \Vert_{\R^{n \times n}} \Vert v \Vert^2_{\R^n} }{\Gamma(1+\a)^2} \dfrac{1}{h} \Big( (b-\t)^\a - (b-(\t+h))^{\a} \Big)^2 \\[5pt] 
 \leq   \dfrac{\Vert C \Vert_{\R^{n \times n}} \Vert v \Vert^2_{\R^n}}{\Gamma(1+\a)^2} \alpha h^\alpha (b-(\t+h))^{\a-1} , 
\end{multline*}
which tends to zero when $h \to 0^+$. Since $\t$ is a Lebesgue point of $\frac{(b-\cdot)^{\beta-1}}{\Gamma(\beta)} R$, the second limit
$$ \lim\limits_{h \to 0^+} \dfrac{1}{h} \di \int_{\t}^{\t+h} \dfrac{(b-s)^{\beta-1}}{\Gamma(\beta)} v^\top \times R(s) \times v \, ds = \dfrac{(b-\t)^{\beta-1}}{\Gamma(\beta)} v^\top \times R(\t) \times v, $$
holds true. Denoting by $M_h := \max (  (b-(\t+h))^{\beta-1},(b-\t)^{\beta-1} )$ (depending on $\beta < 1$ or $\beta \geq 1$), the third term can be bounded as follows
\begin{multline*}
\left\vert \dfrac{1}{h} \di \int_{\t}^{\t+h} \dfrac{(b-s)^{\beta-1}}{\Gamma(\beta)} 2  \eta(s)^\top \times Q(s) \times v \, ds  \right\vert \\[5pt]
 \leq  \dfrac{2 \Vert Q \Vert_{\L^\infty} \Vert v \Vert^2_{\R^n} M_h}{\Gamma(\beta) \Gamma(1+\a)} \dfrac{1}{h} \int_\t^{\t+h} (s-\t)^{\a} \, ds
= \dfrac{2 \Vert Q \Vert_{\L^\infty} \Vert v \Vert^2_{\R^n} M_h}{\Gamma(\beta) \Gamma(2+\a)} h^\alpha,
\end{multline*}
which tends to zero when $h \to 0^+$. The fourth term can be bounded as follows
\begin{multline*}
 \left\vert \dfrac{1}{h} \int_{\t}^{\t+h} \dfrac{(b-s)^{\beta-1}}{\Gamma(\beta)} \eta(s)^\top \times P(s) \times \eta(s) \, ds \right\vert \\[5pt]
 \leq  \dfrac{\Vert P \Vert_{\L^\infty} \Vert v \Vert^2_{\R^n} M_h}{\Gamma(\beta) \Gamma(1+\a)^2} \dfrac{1}{h} \int_\t^{\t+h} (s-\t)^{2\a} \, ds =  \dfrac{\Vert P \Vert_{\L^\infty} \Vert v \Vert^2_{\R^n} M_h}{\Gamma(\beta) (2\a+1) \Gamma(1+\a)^2}h^{2\alpha},
 \end{multline*}
which tends to zero when $h \to 0^+$. Finally, using Lemma~\ref{lem30}, the fifth term can be bounded as follows
\begin{multline*}
\left\vert \dfrac{1}{h} \int_{\t+h}^b \dfrac{(b-s)^{\beta-1}}{\Gamma(\beta)} \eta(s)^\top \times P(s) \times \eta(s) \, ds \right\vert \\[5pt]
\leq \dfrac{ \Vert P \Vert_{\L^\infty} \Vert v \Vert^2_{\R^n}}{\Gamma (\beta) \Gamma(1+\a)^2} \dfrac{1}{h} \int_{\t+h}^{b} (b-s)^{\beta-1} \Big( (s-\t)^\a - (s-(\t+h))^\a \Big)^2 \, ds \\[5pt]
 \leq \dfrac{ \Vert P \Vert_{\L^\infty} \Vert v \Vert^2_{\R^n} }{\Gamma (\beta) \Gamma(1+\a)^2} \alpha h^\alpha \int_{\t+h}^{b} (b-s)^{\beta-1} (s-(\t+h))^{\a -1} \, ds \\[5pt]
= \dfrac{ \Vert P \Vert_{\L^\infty} \Vert v \Vert^2_{\R^n} }{ \Gamma (\alpha+\beta) \Gamma(1+\a)} h^\alpha (b-(\t+h))^{\alpha+\beta-1}, 
\end{multline*}
which tends to zero when $h \to 0^+$. Thus we have obtained that $ \frac{(b-\t)^{\beta-1}}{\Gamma(\beta)} v^\top \times R(\t) \times v \geq 0$ and the proof is complete.
\end{proof}

We are now in a position to state and prove the main result of this section.

\begin{theorem}\label{thmmain1}
Let us assume that there is no constraint in Problem~\eqref{minproblem}, that is, $\K = {}_\cc \AC^{\a,\infty}_{a+}$. If $x \in {}_\cc \AC^{\a,\infty}_{a+}$ is a solution to Problem~\eqref{minproblem}, then:
\begin{enumerate}
\item[\rm{(i)}] \textbf{Euler-Lagrange equation}: the function $\frac{(b-\cdot)^{\beta-1}}{\Gamma (\beta)} \partial_2 L(x,\CDm [x],\cdot) \in \AC^\a_{b+}$ with
\begin{equation*}\label{eqeuler}
\Dp \left[ \dfrac{(b-\cdot)^{\beta-1}}{\Gamma (\beta)} \partial_2 L(x,\CDm [x],\cdot)  \right](t) = - \dfrac{(b-t)^{\beta-1}}{\Gamma (\beta)} \partial_1 L(x(t),\CDm [x](t),t),
\end{equation*}
for almost every $t \in [a,b]$;
\item[\rm{(ii)}] \textbf{Transversality conditions}: the equalities
\begin{eqnarray}
\I^{1-\a}_{b-}  \left[ \dfrac{(b-\cdot)^{\beta-1}}{\Gamma (\beta)} \partial_2 L(x,\CDm [x],\cdot)  \right] (a) & = &  \partial_1 \varphi ( x(a) , x(b)),\label{eqtransv1} \\
 - \I^{1-\a}_{b-}  \left[ \dfrac{(b-\cdot)^{\beta-1}}{\Gamma (\beta)} \partial_2 L(x,\CDm [x],\cdot)  \right] (b) & = & \partial_2 \varphi ( x(a) , x(b)), \label{eqtransv2}
\end{eqnarray}
are both satisfied;
\item[\rm{(iii)}] \textbf{Legendre condition}: the matrix~$\frac{(b-t)^{\beta-1}}{\Gamma(\beta)} \partial^2_{22} L(x(t),\CDm [x](t),t) \in \R^{n \times n}$ is positive semi-definite for almost every $t \in [a,b]$.
\end{enumerate} 
\end{theorem}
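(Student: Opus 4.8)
The plan is to exploit that, because $\K = {}_\cc \AC^{\a,\infty}_{a+}$ is a vector space, any solution $x$ is an \emph{unconstrained} minimizer: for each fixed $\eta \in {}_\cc \AC^{\a,\infty}_{a+}$ the scalar map $h \mapsto \LL(x+h\eta)$ has a global minimum at $h=0$, so $\mathcal{D}\LL(x)(\eta) = 0$ and $\mathcal{D}^2\LL(x)(\eta) \geq 0$. The three items are then read off from these two relations by plugging in well-chosen classes of variations $\eta$ and using Proposition~\ref{propfirstdifferential}, the second-differential identity~\eqref{eqseconddifferential}, and Lemmas~\ref{lem1}, \ref{lem2}, \ref{lem3}. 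Throughout, one uses that $x \in \C$, $\CDm[x] \in \L^\infty$ and $L \in \C^2$ to guarantee that $\partial_1 L(x,\CDm[x],\cdot)$, $\partial_2 L(x,\CDm[x],\cdot)$ and the matrix functions $P$, $Q$, $R$ of~\eqref{eqseconddifferential} lie in $\L^\infty$, while $\frac{(b-\cdot)^{\beta-1}}{\Gamma(\beta)}\,\partial_1 L(x,\CDm[x],\cdot)$ lies (only) in $\L^1$ when $\beta<1$.

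For the Euler--Lagrange equation I would first test with $\eta \in \C^\infty_\cc$, so that $\eta(a)=\eta(b)=0_{\R^n}$ kills the Mayer terms in~\eqref{eqfirstdifferential} and, since $\C^\infty_\cc \subset \AC$, Proposition~\ref{prop5bis} gives $\CDm[\eta] = \I^{1-\a}_{a+}[\dot\eta]$. Then Lemma~\ref{lem2}(i) with $\gamma=1$ rewrites $\I^\beta_{a+}[\,\cdot\,](b)$ as $\I^1_{a+}[\frac{(b-\cdot)^{\beta-1}}{\Gamma(\beta)}\,\cdot\,](b)$, and Lemma~\ref{lem2}(ii) (applied to the $\L^\infty$ data $x_1 = \dot\eta$ and $x_2 = \partial_2 L(x,\CDm[x],\cdot)$) shifts the operator $\I^{1-\a}_{a+}$ off $\dot\eta$. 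The relation $\mathcal{D}\LL(x)(\eta)=0$ then takes exactly the form required by Lemma~\ref{lem1}, with $x_1 = \frac{(b-\cdot)^{\beta-1}}{\Gamma(\beta)}\partial_1 L(x,\CDm[x],\cdot) \in \L^1$ and $x_2 = \I^{1-\a}_{b-}[\frac{(b-\cdot)^{\beta-1}}{\Gamma(\beta)}\partial_2 L(x,\CDm[x],\cdot)] \in \L^1$. Lemma~\ref{lem1} gives $x_2 \in \AC$ with $\dot{x_2}=x_1$ a.e., which, unwinding the definition of the right RL fractional derivative, is precisely item~(i).

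For the transversality conditions I would now test with $\eta \in \C^1$; this is legitimate since $\C^1 \subset {}_\cc \AC^{\a,\infty}_{a+}$, and it leaves the boundary values $\eta(a)$, $\eta(b)$ free. Running the same two manipulations via Lemma~\ref{lem2} but retaining the Mayer terms, and then using item~(i) to identify $\frac{(b-\cdot)^{\beta-1}}{\Gamma(\beta)}\partial_1 L(x,\CDm[x],\cdot)$ as the classical almost-everywhere derivative of the \emph{absolutely continuous} function $\Lambda := \I^{1-\a}_{b-}[\frac{(b-\cdot)^{\beta-1}}{\Gamma(\beta)}\partial_2 L(x,\CDm[x],\cdot)]$, the relation $\mathcal{D}\LL(x)(\eta)=0$ becomes
\[
\partial_1\varphi(x(a),x(b))\cdot\eta(a) + \partial_2\varphi(x(a),x(b))\cdot\eta(b) + \int_a^b \big( \dot\Lambda \cdot \eta + \Lambda \cdot \dot\eta \big)\, ds = 0 .
\]
Since $\Lambda \in \AC$ and $\eta \in \C^1$, an ordinary integration by parts reduces the integral to $\Lambda(b)\cdot\eta(b) - \Lambda(a)\cdot\eta(a)$; choosing $\eta$ affine with arbitrary prescribed endpoint values then forces $\Lambda(a) = \partial_1\varphi(x(a),x(b))$ and $\Lambda(b) = -\partial_2\varphi(x(a),x(b))$, i.e.\ identities~\eqref{eqtransv1} and~\eqref{eqtransv2}.

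For the Legendre condition I would use $\mathcal{D}^2\LL(x)(\eta) \geq 0$: by~\eqref{eqseconddifferential} the left-hand side is exactly the quantity appearing in the hypothesis of Lemma~\ref{lem3}, with $A$, $B$, $C$ the Hessian blocks of $\varphi$ at $(x(a),x(b))$ and $P$, $Q$, $R \in \L^\infty$ the corresponding Hessian blocks of $L$ along $(x,\CDm[x],\cdot)$, so Lemma~\ref{lem3} applies verbatim and yields item~(iii). The genuine difficulty in this whole argument is not any single step but the bookkeeping that keeps every manipulation inside the hypotheses of Lemmas~\ref{lem1}--\ref{lem3}: one must apply the fractional integration by parts of Lemma~\ref{lem2} only to $\L^\infty$ data (the weight $\frac{(b-\cdot)^{\beta-1}}{\Gamma(\beta)}$ being merely integrable near $b$ when $\beta<1$), and --- crucially for item~(ii) --- one must restrict the transversality variations to $\C^1$ rather than the full space ${}_\cc \AC^{\a,\infty}_{a+}$, whose elements need not be absolutely continuous, so that the classical integration by parts is available; the already-proved Euler--Lagrange equation is exactly what turns the remaining fractional expression into such a classical integral.
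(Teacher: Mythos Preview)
Your proposal is correct and follows essentially the same approach as the paper's own proof: vanishing of $\mathcal{D}\LL(x)(\eta)$ on $\C^\infty_\cc$ combined with Lemma~\ref{lem2} and Lemma~\ref{lem1} for the Euler--Lagrange equation, enlargement of the variation class to recover the boundary terms for transversality, and direct application of Lemma~\ref{lem3} to $\mathcal{D}^2\LL(x)(\eta)\geq 0$ for the Legendre condition. The only cosmetic difference is that the paper enlarges to $\eta\in\C^\infty$ (rather than your $\C^1$) for the transversality step and phrases the integration by parts as a ``simple integration'' after invoking the already-proved Euler--Lagrange identity; your explicit choice of affine $\eta$ to isolate the endpoint values is an equivalent way to conclude.
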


\begin{proof}
Since there is no constraint in Problem~\eqref{minproblem} and since $x \in {}_\cc \AC^{\a,\infty}_{a+}$ is a solution to Problem~\eqref{minproblem}, one can easily see that~$\mathcal{D}\LL(x)(\eta) = 0 $ and $\mathcal{D}^2 \LL(x)(\eta) \geq 0 $ for all $\eta \in {}_\cc \AC^{\a,\infty}_{a+}$. In particular, from Equality~\eqref{eqfirstdifferential}, it holds that 
$$  \I^{\beta}_{a+} \Big[ \partial_1 L(x,\CDm [x],\cdot) \cdot \eta + \partial_2 L(x,\CDm [x],\cdot) \cdot  \I^{1-\alpha}_{a+} [\dot{\eta}] \Big](b) = 0, $$
for all $\eta \in \C^\infty_\cc \subset  {}_\cc \AC^{\a,\infty}_{a+}$, since $\eta(a)=\eta(b)=0_{\R^n}$ and $\CDm [\eta] = \I^{1-\alpha}_{a+} [\dot{\eta}]$. Using the first item of Lemma~\ref{lem2} with $\gamma = 1$ on the first above term and the second item on the second above term, we get that 
$$ \I^{1}_{a+} \left[ \dfrac{(b-\cdot)^{\beta-1}}{\Gamma(\beta)} \partial_1 L(x,\CDm [x],\cdot) \cdot \eta + \I^{1-\alpha}_{b-} \left[ \dfrac{(b-\cdot)^{\beta-1}}{\Gamma(\beta)} \partial_2 L(x,\CDm [x],\cdot) \right] \cdot \dot{\eta} \right](b) = 0, $$
for all $\eta \in \C^\infty_\cc$. From Lemma~\ref{lem1}, we conclude that 
$$ \I^{1-\alpha}_{b-} \left[ \dfrac{(b-\cdot)^{\beta-1}}{\Gamma(\beta)} \partial_2 L(x,\CDm [x],\cdot) \right] \in \AC ,$$
which corresponds exactly to $\frac{(b-\cdot)^{\beta-1}}{\Gamma (\beta)} \partial_2 L(x,\CDm [x],\cdot) \in \AC^\a_{b+}$, with
\begin{equation*}
\frac{d}{dt} \left[ \I^{1-\alpha}_{b-} \left[ \dfrac{(b-\cdot)^{\beta-1}}{\Gamma(\beta)} \partial_2 L(x,\CDm [x],\cdot) \right] \right](t) = \dfrac{(b-t)^{\beta-1}}{\Gamma (\beta)} \partial_1 L(x(t),\CDm [x](t),t),
\end{equation*}
for almost every $t \in [a,b]$, which exactly coincides with the Euler-Lagrange equation stated in Theorem~\ref{thmmain1}. In order to derive the transversality conditions, we follow the same strategy but with variations $\eta$ in the larger space $\C^\infty$ and we get that
\begin{multline*}
\partial_1 \varphi (x(a),x(b)) \cdot \eta (a) + \partial_2 \varphi (x(a),x(b)) \cdot \eta (b) \\[5pt]
+ \I^{1}_{a+} \left[ \dfrac{(b-\cdot)^{\beta-1}}{\Gamma(\beta)} \partial_1 L(x,\CDm [x],\cdot) \cdot \eta + \I^{1-\alpha}_{b-} \left[ \dfrac{(b-\cdot)^{\beta-1}}{\Gamma(\beta)} \partial_2 L(x,\CDm [x],\cdot) \right] \cdot \dot{\eta} \right](b) = 0,
\end{multline*}
for all $\eta \in \C^\infty$. From the Euler-Lagrange equation and with a simple integration, we get that
\begin{multline*}
\left( \partial_1 \varphi (x(a),x(b)) - \I^{1-\alpha}_{b-} \left[ \dfrac{(b-\cdot)^{\beta-1}}{\Gamma(\beta)} \partial_2 L(x,\CDm [x],\cdot) \right](a) \right) \cdot \eta (a) \\[5pt]
+ \left( \partial_2 \varphi (x(a),x(b)) + \I^{1-\alpha}_{b-} \left[ \dfrac{(b-\cdot)^{\beta-1}}{\Gamma(\beta)} \partial_2 L(x,\CDm [x],\cdot) \right](b) \right) \cdot \eta (b)  = 0,
\end{multline*}
for all $\eta \in \C^\infty$, which concludes the proof of the transversality conditions. Finally, the Legendre condition is a direct consequence of Equality~\eqref{eqseconddifferential} and Lemma~\ref{lem3}.
\end{proof}

A list of comments is in order.

\begin{remark}
In the literature, the Euler-Lagrange equation and the transversality conditions are known to be \textit{first-order necessary optimality conditions} (because they are derived from the vanishing of the first-order G\^ateaux-differential of $\LL$ at $x$). In constrast, the Legendre condition is known to be a \textit{second-order necessary optimality condition} (because it is derived from the nonnegativeness of the second-order G\^ateaux-differential of $\LL$ at $x$).
\end{remark}

\begin{remark}
In order to derive the (second-order) Legendre condition in Theorem~\ref{thmmain1}, we assumed in this paper that $\varphi$ and $L$ are smooth of class $\C^2$. Actually, note that the derivation of the first-order necessary optimality conditions in Theorem~\ref{thmmain1} only requires that $\varphi$ and $L$ are smooth of class $\C^1$. 
\end{remark}

\begin{remark}
For the sake of simplicity, we choose to enunciate Theorem~\ref{thmmain1} for (global) minimizers of the Bolza functional $\LL$. Nevertheless one can easily see that Theorem~\ref{thmmain1} can also be derived for local minimizers (in a sense to precise). Actually the first-order necessary optimality conditions in Theorem~\ref{thmmain1} are even valid for (only) critical points of the Bolza functional $\LL$.
\end{remark}

\begin{remark}\label{remcondfixee}
The proof of the first-order necessary optimality conditions in Theorem~\ref{thmmain1} can easily be adapted to the three following constrained cases:
\begin{enumerate}
\item[\rm{(i)}] fixed initial condition to some $x_a \in \R^n$ and free final condition, that is, if $\K = \{ x \in \CACmi \mid x(a) = x_a \}$; however, in that case, the transversality condition~\eqref{eqtransv1} cannot be derived in general;
\item[\rm{(ii)}] free initial condition and fixed final condition to some $x_b \in \R^n$, that is, if $\K = \{ x \in \CACmi \mid x(b) = x_b \}$; however, in that case, the transversality condition~\eqref{eqtransv2} cannot be derived in general;
\item[\rm{(iii)}] fixed initial condition to some $x_a \in \R^n$ and fixed final condition to some $x_b \in \R^n$, that is, if $\K = \{ x \in \CACmi \mid x(a) = x_a \text{ and } x(b) = x_b \}$; however, in that case, none of the transversality conditions is valid in general.
\end{enumerate}
On the contrary, the proof of the (second-order) Legendre condition can only be adapted to the first above constrained case. In Section~\ref{secobstruction} a detailed discussion is given about the obstructions encountered in deriving the Legendre condition in the fractional setting with final constraints (such as in the two last above constrained cases). In Section~\ref{secwith} we elaborate a new strategy in order to prove all the necessary optimality conditions of Theorem~\ref{thmmain1} (including the Legendre condition) in a general framework of mixed initial/final constraints (encompassing in particular  all the three above constrained cases).
\end{remark}

\begin{remark}\label{remcont}
This remark is devoted to the related problem of minimizing the restriction~$\LL_{|\CCC} : \CCC \to \R$ where $ \CCC := \{ x \in \CACm \mid \CDm[x] \in \C \}$. Note that $\C^1 \subset \CCC \subset \CACmi$ from Propositions~\ref{prop3} and~\ref{prop5bis}. Let $x \in \CCC$ be a minimizer of~$\LL_{|\CCC}$. Our aim is to prove that $x$ is then a minimizer of~$\LL$. Let $X \in \CACmi$, let us denote by~$u := \CDm[X] \in \L^\infty$ and consider some~$p > \frac{1}{\alpha}$ being fixed. From a standard density result (see, e.g.,~\cite[Corollary~4.23 p.109]{brezis}), there exists a sequence $(u_k)_{k \in \N} \subset \C^\infty_\cc$ such that $(u_k)_{k \in \N}$ converges to $u$ in $\L^p$ and pointwisely almost everywhere on $[a,b]$ (up to a subsequence that we do not relabel, from the PCLDC theorem). Moreover, based on the construction by convolution product in~\cite[Corollary~4.23 p.109]{brezis}, the uniform bound~$\Vert u_k \Vert_{\L^\infty} \leq \Vert u \Vert_{\L^\infty}$ holds true for all $k \in \N$. Then we define $X_k := X(a)+\Im [u_k] $ which satisfies $X_k \in \CCC$ with~$\CDm[X_k]=u_k$ for all~$k \in \N$. In particular it holds that $\LL(x) \leq \LL(X_k)$ for all~$k \in \N$. Since~$p > \frac{1}{\a}$, one can easily deduce from~\cite[Property~4 p.242]{bourdin2013} that the sequence~$(X_k)_{k \in \N}$ uniformly converges on $[a,b]$ to~$X$. From the LDC theorem, one can get that~$\LL (X_k)$ tends to $\LL(X) $ when~$k \to \infty$. Finally we have proved that $\LL(x) \leq \LL(X)$ for all $X \in \CACmi$. Hence, if $x \in \CCC$ is a minimizer of~$\LL_{|\CCC}$, then $x$ is a minimizer of~$\LL$. As a consequence, the first- and second-order necessary optimality conditions derived in Theorem~\ref{thmmain1} are also valid.
\end{remark}

\begin{remark}\label{rem465}
In the case where $0 < \alpha < 1 \leq \beta$ (in particular for $\beta = 1$), note that the transversality condition~\eqref{eqtransv2} implies that $\partial_2 \varphi ( x(a) , x(b)) = 0_{\R^n}$ from the right counterpart of Proposition~\ref{prop3}. This remark allows us to conclude that a (quite) large class of purely fractional variational problems has no solution. Precisely, if one considers $0 < \alpha < 1 \leq \beta$ and some Mayer function $\varphi$ such that~$\partial_2 \varphi (x_1,x_2) \neq 0_{\R^n}$ for all $(x_1,x_2) \in \R^n \times \R^n$, then we can directly conclude from Theorem~\ref{thmmain1} that the corresponding Bolza functional~$\LL$ has no minimizer in $\CACmi$. This specific feature of the fractional setting is illustrated in Example~\ref{ex1} below.
\end{remark}

\begin{example}\label{ex1}
Let us consider the one-dimensional ($n=1$) Bolza functional given by
$$ \LL(x) := x (b) + \I^\beta_{a+} \left[ \dfrac{1}{2} \Big( x^2 + \CDm[x]^2 \Big) \right] ( b), $$
for all $x \in \CACmi$, where $0 < \alpha \leq 1$ and $\beta=1$, $a=0$ and $b=1$. In the classical case $\alpha=1$, it can be proved that $\LL$ admits a minimizer in ${}_\cc \AC^{1,\infty}_{a+}$ given by $x(t) = \frac{4e}{1-e^2} \cosh (t) $ for all~$t \in [a,b]$. Indeed one has to solve the Euler-Lagrange equation together with the transversality conditions in order to determinate the above candidate, and then prove that this candidate is optimal from the convexity of the Lagrange cost (following for example the strategy proposed in~\cite[p.258]{Brunt2004}). In contrast, we can directly conclude from Remark~\ref{rem465} that~$\LL$ has no minimizer in $\CACmi$ in the purely fractional case~$0 < \alpha < 1$.
\end{example}

\begin{remark}\label{rem464}
From~\cite[Property~4 p.242]{bourdin2013}, the content of Remark~\ref{rem465} and Example~\ref{ex1} can even be generalized to the case where $0 < \alpha < 1$ and $\beta > \alpha$.
\end{remark}

\begin{remark}\label{rem466}
In the spirit of Remark~\ref{rem465}, we refer to the paper~\cite{Ferreira2018} in which a detailed discussion is provided about the generalization of the calculus of variations to the fractional setting by preserving (or not) the existence of solutions.
\end{remark}

\subsection{Obstructions for the Legendre condition in the fractional setting with final constraints}\label{secobstruction}

As we mentioned in Remark~\ref{remcondfixee}, the first-order necessary optimality conditions in Theorem~\ref{thmmain1} can be similarly derived even by considering a fixed endpoint in Problem~\eqref{minproblem}. In this section our aim is to discuss in detail the obstructions that we encounter in deriving the Legendre condition in the fractional setting with final constraints. 

\medskip

Roughly speaking the standard proof of the Legendre condition in the classical case ($\alpha = 1$) is based on the existence of nontrivial variations $\eta \in {}_\cc \AC^{1,\infty}_{a+}$ such that:
\begin{enumerate}
\item[\rm{(i)}] $\eta$ and $ \dot{\eta}$ are compactly supported in a small interval $[\t,\t+h] \subset (a,b)$;
\item[\rm{(ii)}] $\dot{\eta}$ ``dominates" $\eta$ on $[\t,\t+h]$ in a sense to precise (see the discussion in~\cite[p.60]{Liberzon} for details).
\end{enumerate}
In particular, since the variation $\eta$ is compactly supported, it holds that $\eta(a)=\eta(b) = 0_{\R^n}$ which does not perturb the initial and final points (and thus, they can be considered as fixed). Several different families of variations have been considered in the literature (see, e.g., \cite[proof of Lemma p.103]{Fomin}, \cite[proof of Theorem~10.3.1 p.228]{Brunt2004} or~\cite[proof of Theorem~1.3 p.26]{Zelikin}). 

\medskip

In a first attempt to derive a fractional version of the Legendre condition (with fixed initial and final points), the authors of~\cite{Lazo2014} followed the same classical strategy as above. Unfortunately, one can easily check that the variation $\eta$ considered in \cite[Equality~(11)]{Lazo2014} does not satisfy all of the above properties. Precisely, while $\eta$ is indeed compactly supported, $\CDm [\eta]$ is clearly not (in contrary to what is claimed in \cite[Equality~(12)]{Lazo2014}). Surprisingly the same mistake has been disseminated in a series of papers (see~\cite{Almeida2017,Almeida2018,AlmeidaMorgado}). This discovery was the starting point of the present work. Actually, due to the very well-known \emph{memory skill} of the fractional derivative~$\CDm$, we conjectured that Property~(i) could not be satisfied by any nontrivial variation $\eta$ in the purely fractional case $0 < \alpha < 1$. This is exactly the content of the following novel result, which condemns for good the exact adaptation of the classical approach for the Legendre condition to the purely fractional case with final constraints.

\begin{proposition}\label{prop765}
Let $0 < \a <1$ and $x \in \CACm$. If there exist two real numbers $a \leq c < d \leq b$ such that:
\begin{enumerate}
\item[\rm{(i)}] $x(t)=0_{\R^n}$ for all $t \in [c,d]$;
\item[\rm{(ii)}] $\CDm [x](t) = 0_{\R^n}$ for almost every $t \in [c,d]$;
\end{enumerate}
then $x(t)=0_{\R^n}$ for all $t \in [a,d]$.
\end{proposition}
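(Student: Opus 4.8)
The plan is to exploit the representation in Proposition~\ref{propcompC}: since $x \in \CACm$, there is $y \in \R^n$ with $x(t) = y + \Im[\CDm x](t)$ for a.e.\ $t \in [a,b]$. Write $u := \CDm[x] \in \L^1$. Since $x$ is continuous and $x \equiv 0$ on $[c,d]$, evaluating (in the continuous-representative sense) at $t = c$ gives $y = -\Im[u](c)$; more usefully, for any $t \in [c,d]$ we have $0 = x(t) = y + \Im[u](t)$, hence $\Im[u](t) = \Im[u](c)$ is constant on $[c,d]$. Because $u \equiv 0$ on $[c,d]$ by hypothesis~(ii), splitting the integral defining $\Im[u](t)$ at $s=c$ shows
\begin{equation*}
\Im[u](t) = \int_a^t \frac{(t-s)^{\a-1}}{\Gamma(\a)} u(s)\,ds = \int_a^c \frac{(t-s)^{\a-1}}{\Gamma(\a)} u(s)\,ds =: g(t)
\end{equation*}
for all $t \in [c,d]$, and this quantity must be constant in $t$ on $[c,d]$. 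So the heart of the matter is: if $u \in \L^1([a,c],\R^n)$ is such that the function $t \mapsto \int_a^c (t-s)^{\a-1} u(s)\,ds$ is constant on an interval $[c,d]$ with $c<d$, then $u = 0_{\R^n}$ a.e.\ on $[a,c]$; once this is known, $x \equiv 0$ on $[a,c]$ follows, and together with $x\equiv 0$ on $[c,d]$ we get $x \equiv 0$ on $[a,d]$.

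To prove that claim I would use analyticity. For $t > c$ the integrand $(t-s)^{\a-1}$ is real-analytic in $t$ on $(c,\infty)$ for each fixed $s \in [a,c]$ (since $t - s \geq t - c > 0$), and one can differentiate under the integral sign arbitrarily many times, the dominating bounds being uniform for $t$ in compact subsets of $(c,\infty)$. Hence $g$ extends to a real-analytic function on $(c,\infty)$. If $g$ is constant on $[c,d]$ then by the identity theorem for analytic functions $g$ is constant, equal to that same constant vector $K$, on all of $(c,\infty)$; in particular $g(t) \to 0$ as $t \to \infty$ (each component is $O(t^{\a-1}\|u\|_{\L^1([a,c])})$ since $t-s \le t$ and $\a - 1 < 0$ give $(t-s)^{\a-1} \le (t-c)^{\a-1} \to 0$ — wait, one must be slightly careful: $(t-s)^{\a-1}$ with $\a<1$ is decreasing in $t-s$, so $(t-s)^{\a-1} \le (t-c)^{\a-1}$ for $s \le c < t$, giving $\|g(t)\| \le (t-c)^{\a-1}\|u\|_{\L^1([a,c])}/\Gamma(\a) \to 0$). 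Therefore $K = 0$, i.e.\ $\int_a^c (t-s)^{\a-1}u(s)\,ds = 0$ for all $t > c$. Substituting and recognising this as a right RL fractional integral, $\I^{\a}_{c-}[u\mathbf{1}_{[a,c]}](t) = 0$ for $t$ in a right-neighbourhood of... — more cleanly, it says the left RL integral of order $\a$ of $u\mathbf 1_{[a,c]}$ vanishes on $(c,\infty)$, and since $\Im$ is injective on $\L^1$ (as $\I^{1-\a}_{a+}\Im = \I^1_{a+}$ by Proposition~\ref{prop5}, and $\I^1_{a+}[w] = 0$ forces $w=0$ a.e.) we conclude $u = 0_{\R^n}$ a.e.\ on $[a,c]$.

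I expect the main obstacle to be making the analytic-continuation step fully rigorous — specifically, justifying that $g$ is genuinely analytic on $(c,\infty)$ (differentiation under the integral with locally uniform domination) and handling the endpoint behaviour at $t = c$ where $(t-s)^{\a-1}$ blows up as $s \to t^-$; this is exactly the kind of technicality the authors flag as making the result "quite long and technical" and relegate to Appendix~\ref{app09}. A secondary subtlety is the passage between the a.e.\ identity $x(t) = y + \Im[u](t)$ and pointwise statements on $[c,d]$, which is legitimate because $x$ is continuous and, $u$ being in $\L^1$, the identity holds a.e., so the constant value of $\Im[u]$ on $[c,d]$ is well-defined up to the continuous representative; Proposition~\ref{propcompC} already packages what is needed. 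Once $u \equiv 0$ on $[a,c]$ is established, injectivity of $\Im$ (equivalently, that $x$ is recovered from $x(a)$ and $\CDm[x]$) and continuity of $x$ give $x \equiv 0$ on $[a,c]$, completing the argument.
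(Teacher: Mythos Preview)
Your overall strategy --- show that $g(t) := \int_a^c (t-s)^{\alpha-1} u(s)\,ds/\Gamma(\alpha)$ is real-analytic on $(c,\infty)$, use the identity theorem to propagate the constancy from $(c,d)$ to all of $(c,\infty)$, and then use the decay $\Vert g(t) \Vert \leq (t-c)^{\alpha-1}\Vert u \Vert_{\L^1}/\Gamma(\alpha) \to 0$ to force that constant to be zero --- is genuinely different from the paper's argument. The paper never invokes analytic continuation; instead it differentiates $\Psi(t)=\int_a^c(t-s)^{\alpha-1}u(s)\,ds$ enough times on a compact subinterval $I\subset(c,d)$, then expands $(t-s)^{\alpha-1-r}=(t-a)^{\alpha-1-r}\sum_k \rho_k((s-a)/(t-a))^k$ via the generalized binomial series (Lemma~\ref{lem22}) and concludes that all moments $\int_a^c(s-a)^k u(s)\,ds$ vanish (Lemma~\ref{lem11}). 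Your route is cleaner conceptually and even yields the stronger intermediate conclusion $g\equiv 0$ on all of $(c,\infty)$.

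However, your last step is a real gap. You claim that from $\Im[u\mathbf{1}_{[a,c]}]=0$ on $(c,\infty)$ one can invoke injectivity of $\Im$ on $\L^1$ to get $u=0$ on $[a,c]$. But the injectivity you quote (via $\I^{1-\alpha}_{a+}\Im=\I^1_{a+}$) requires $\Im[w]=0$ almost everywhere on the \emph{entire} base interval, whereas for $t\in[a,c]$ you have $\Im[u\mathbf{1}_{[a,c]}](t)=\Im[u](t)=x(t)-y$, and showing that this vanishes is precisely the statement you are trying to prove. In other words, the implication
\[
\int_a^c (t-s)^{\alpha-1} u(s)\,ds = 0 \quad \text{for all } t>c \quad \Longrightarrow \quad u=0 \ \text{a.e.\ on } [a,c]
\]
is not a formal consequence of the $\L^1$-injectivity of $\Im$; it is exactly the nontrivial content of the paper's Lemma~\ref{lem22}, and this is where the substance lies. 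From your position you can close the gap --- for instance, differentiate the identity $g\equiv 0$ on $(c,\infty)$ to obtain $\int_a^c(t-s)^{\alpha-1-k}u(s)\,ds=0$ for every $k\geq 0$, fix any $t_0>c$, expand via the binomial series as the paper does, and deduce that all moments $\int_a^c(s-a)^k u(s)\,ds$ vanish; or alternatively pass through the Laplace transform using $(\tau+\sigma)^{\alpha-1}=\Gamma(1-\alpha)^{-1}\int_0^\infty e^{-(\tau+\sigma)r}r^{-\alpha}\,dr$ --- but some such argument is required, and it cannot be replaced by the bare injectivity of $\Im$.
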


\begin{proof}
Since the proof of Proposition~\ref{prop765} is not trivial, we took the decision to move it to Appendix~\ref{app09}.
\end{proof}

\begin{remark}
We would like to emphasize that Proposition~\ref{prop765} is an intrinsic result of fractional calculus, in the sense that it is clearly not true for $\alpha = 1$.
\end{remark}

\begin{remark}\label{remfracdiff}
Together with the lemmas constituting its proof, Proposition~\ref{prop765} should be of independent interest for other researchers, in particular in the field of fractional differential equations. Indeed, it is well-known in classical differential equations that two different initial conditions yield two different solutions that cannot intersect each other. The preservation (or not) of this fundamental property to the purely fractional case was discussed in~\cite{cong,Diethelm}. In particular the authors of~\cite{cong} prove that this property is preserved in the one-dimensional setting, while it does not in the higher-dimensional case (a counter-example is provided). Note that Proposition~\ref{prop765} allows to contribute to this discussion. Precisely we can deduce that, even in the higher-dimensional case, two different initial conditions of a Caputo fractional differential equation of order $0 < \alpha < 1$ yield two different solutions that cannot coincide on an interval with a nonempty interior.
\end{remark}

The authors of~\cite{Lazo2014} considered a compactly supported variation $\eta$, believing that its fractional derivative~$\CDm [\eta]$ is also compactly supported (which is not). During our bibliographical search, we found the article~\cite{Guo2013} in which the Legendre condition is proved in the more general framework of fractional optimal control theory (with free endpoint). In contrast to~\cite{Lazo2014}, the author of~\cite{Guo2013} provides a ``reverse" strategy, in the sense that he considers first a compactly supported fractional derivative~$\CDm [\eta]$ and then obtains the corresponding variation $\eta$ by fixing $\eta (a)=0_{\R^n}$. With this approach, it is clear that $\eta$ is not compactly supported and, in particular, $\eta (b) \neq 0_{\R^n}$ (which has no repercussion since the endpoint is considered to be free in~\cite{Guo2013}). In the present work, our proof of the Legendre condition in Theorem~\ref{thmmain1} is based on the same approach than in~\cite{Guo2013} (see the definition of the variation~$\eta$ in~\eqref{eqguo}). 

\medskip

One may conclude that, if we do not consider final constraints in Problem~\eqref{minproblem}, then the strategy developed by~\cite{Guo2013} allows to prove the Legendre condition. However, showing that the Legendre condition in the fractional setting still holds considering final constraints remains an open challenge in the literature. Our aim in the next section is to fill this gap by considering the even more general framework of mixed initial/final constraints.

\begin{remark}\label{remguo1}
Though the variation considered in~\cite{Guo2013} allows to prove the Legendre condition in the fractional setting without final constraint, the proof developed in~\cite[Theorem~4.1 Step~B]{Guo2013} unfortunately contains mistakes in some estimations. Indeed, one may check that Equalities~(13) and~(14) of \cite[p.122]{Guo2013} are not correct. Nevertheless, these mistakes are not fatal and can be corrected (for example, one has to replace $O(\eps_1^2)$ by $O(\eps_1^{\alpha+1})$ in~(13), etc.). The proof of Lemma~\ref{lem3} in the present paper provides a corrected version of these estimations.
\end{remark}

\begin{remark}\label{remguo2}
In a second part of~\cite{Guo2013}, the author considers a fractional optimal control problem with final constraints. It is intriguing that the author considers the proof of this case to be similar to the one of the free endpoint case, omitting it in his work. It is by no means obvious for us how the author came to such desideratum. In the next section we provide a complete and detailed proof of the Legendre condition in the fractional setting with final constraints.
\end{remark}

\subsection{The case with general mixed initial/final constraints}\label{secwith}
In this section our aim is to deal with general mixed initial/final constraints in Problem~\eqref{minproblem}. Precisely we consider the functional constraint set
\begin{equation}\label{eqF}
\K := \{ x \in {}_\cc \AC^{\a,\infty}_{a+} \mid g(x(a),x(b)) \in \S \}, 
\end{equation}
where $j \geq 1$ is a positive integer, $g : \R^n \times \R^n \to \R^j$ is smooth of class $\C^1$ and $\S \subset \R^j $ is a nonempty closed convex subset of $\R^j$. We describe some typical situations of constraints in Remark~\ref{remarkconditionsterminales}. Let us recall the two following notions:
\begin{enumerate}
\item[\rm{(i)}] The map $g : \R^n \times \R^n \to \R^j$ is said to be \textit{regular} (or \textit{submersive}) at $(x_1,x_2) \in \R^n \times \R^n$ if its differential~$\mathcal{D} g(x_1,x_2)$ at this point is surjective;
\item[\rm{(ii)}] The \textit{normal cone} to $\S$ at a point $z \in\S$ is defined as the set 
$$ \mathrm{N}_\S[z] :=\{ z' \in\R^j \mid \forall z'' \in \S, \;  z' \cdot  (z'' -z)  \leq 0 \} .$$
For example, if $\S = \R^j$ is the entire space, then $\mathrm{N}_\S[z] = \{ 0_{\R^j} \}$ is reduced to the origin singleton for all $z \in \R^j$. On the other hand, if $\S = \{ \bar{z} \}$ is reduced to a singleton, then $\mathrm{N}_\S[\bar{z}] = \R^j$ is the entire space. 
\end{enumerate}
We are now in a position to state the main result of the present paper.

\begin{theorem}\label{thm2}
Let us assume that $\K$ is given by~\eqref{eqF}. If $x \in \K$ is a solution to Problem~\eqref{minproblem} and~$g$ is regular at $(x(a),x(b))$, then:
\begin{enumerate}
\item[\rm{(i)}] \textbf{Euler-Lagrange equation}: the function $\frac{(b-\cdot)^{\beta-1}}{\Gamma (\beta)} \partial_2 L(x,\CDm [x],\cdot) \in \AC^\a_{b+}$ with
\begin{equation*}\label{eqeuler-2}
\Dp \left[ \dfrac{(b-\cdot)^{\beta-1}}{\Gamma (\beta)} \partial_2 L(x,\CDm [x],\cdot)  \right](t) = - \dfrac{(b-t)^{\beta-1}}{\Gamma (\beta)} \partial_1 L(x(t),\CDm [x](t),t),
\end{equation*}
for almost every $t \in [a,b]$;
\item[\rm{(ii)}] \textbf{Transversality conditions}: the equalities
\begin{eqnarray}
\I^{1-\a}_{b-}  \left[ \dfrac{(b-\cdot)^{\beta-1}}{\Gamma (\beta)} \partial_2 L(x,\CDm [x],\cdot)  \right] (a) & = &  \partial_1 \varphi ( x(a) , x(b)) - \partial_1 g ( x(a),x(b) )^\top \times \psi ,\label{eqtransv1-2} \\
 - \I^{1-\a}_{b-}  \left[ \dfrac{(b-\cdot)^{\beta-1}}{\Gamma (\beta)} \partial_2 L(x,\CDm [x],\cdot)  \right] (b) & = & \partial_2 \varphi ( x(a) , x(b)) - \partial_2 g ( x(a),x(b) )^\top \times \psi,  \label{eqtransv2-2}
\end{eqnarray}
are both satisfied, where $\psi \in \R^j$ is such that $ - \psi \in \mathrm{N}_\S [ g(x(a),x(b)) ]$;
\item[\rm{(iii)}] \textbf{Legendre condition}: the matrix~$\frac{(b-t)^{\beta-1}}{\Gamma(\beta)} \partial^2_{22} L(x(t),\CDm [x](t),t) \in \R^{n \times n}$ is positive semi-definite for almost every $t \in [a,b]$.
\end{enumerate} 
\end{theorem}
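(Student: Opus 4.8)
The plan is to adapt the classical Ekeland-variational-principle proof of the Pontryagin maximum principle to the fractional Bolza setting, exactly as announced in the introduction. First I would reformulate Problem~\eqref{minproblem} with the constraint $g(x(a),x(b)) \in \S$ as an unconstrained penalized problem: given the solution $x$, introduce a state variable written through its Caputo derivative $u := \CDm[x] \in \L^\infty$ and initial value $x(a)$, so that by Proposition~\ref{propcompC} any admissible competitor is parametrized by $(u,\xi) \in \L^\infty \times \R^n$ via $X_{u,\xi} = \xi + \Im[u]$. On the metric space of controls $u$ (endowed with a suitable $\L^p$-type distance with $p > 1/\alpha$, so that $u \mapsto X_{u,\xi}$ is uniformly continuous by~\cite[Property~4 p.242]{bourdin2013}) together with the Euclidean factor for $\xi$, I would define for $\varepsilon > 0$ the penalized functional $J_\varepsilon(u,\xi) := \sqrt{ \big( (\LL(X_{u,\xi}) - \LL(x) + \varepsilon)^+ \big)^2 + \mathrm{dist}_\S\big( g(X_{u,\xi}(a),X_{u,\xi}(b)) \big)^2 }$, which is continuous, nonnegative, and satisfies $J_\varepsilon(x) = \varepsilon \leq \inf J_\varepsilon + \varepsilon$. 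Ekeland's variational principle then furnishes $(u_\varepsilon,\xi_\varepsilon)$ close to $(u,\xi)$ (distance $O(\sqrt\varepsilon)$) that minimizes $J_\varepsilon(\cdot) + \sqrt\varepsilon\, d(\cdot,(u_\varepsilon,\xi_\varepsilon))$.

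Next I would exploit this approximate minimality by making needle-type (spike) variations in the control and ordinary variations in $\xi$. Concretely, for a Lebesgue point $\tau$ and a value $w \in \R^n$, perturb $u_\varepsilon$ on $[\tau,\tau+h)$ to $w$; divide the Ekeland inequality by $h$ and let $h \to 0^+$, using the LDC theorem and Proposition~\ref{prop3} (Lagrange term continuity) to pass to the limit. This yields, with normalized multipliers $(\lambda_\varepsilon,\psi_\varepsilon) \in \R_{\geq 0} \times \R^j$ on the unit sphere (coming from the derivative of the square root in $J_\varepsilon$), a variational inequality; a compactness argument lets $\varepsilon \to 0$ along a subsequence so that $(\lambda_\varepsilon,\psi_\varepsilon) \to (\lambda,\psi)$ on the unit sphere, $u_\varepsilon \to u$, $\xi_\varepsilon \to \xi$, and the penalty term $\mathrm{dist}_\S(\cdot)$ forces $-\psi \in \mathrm{N}_\S[g(x(a),x(b))]$. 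The regularity (submersivity) of $g$ at $(x(a),x(b))$ is what rules out the abnormal case $\lambda = 0$: if $\lambda = 0$ one derives $\partial_1 g^\top \psi = \partial_2 g^\top \psi = 0$ with $\psi \neq 0$, contradicting surjectivity of $\mathcal{D}g(x(a),x(b))$; hence we may normalize $\lambda = 1$. The resulting first-order condition, after applying the two items of Lemma~\ref{lem2} (to move $\I^\beta_{a+}$ onto $\I^1_{a+}$ and to perform the fractional integration by parts) and Lemma~\ref{lem1} (Du Bois-Reymond), gives precisely the Euler-Lagrange equation and, taking also free variations of $\xi$ and free $\eta(b)$ via the adjoint, the transversality conditions~\eqref{eqtransv1-2}--\eqref{eqtransv2-2}.

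For the Legendre condition (item~(iii)), I would not re-run the Ekeland machinery but instead observe that once the first-order conditions hold, the solution $x$ is in particular a critical point of $\LL$ restricted to the affine subspace of competitors respecting the (now linearized and, by regularity, surjectively constrained) endpoint constraint; one can then produce, for almost every Lebesgue point $\tau$ of $\frac{(b-\cdot)^{\beta-1}}{\Gamma(\beta)} R$ (where $R = \partial^2_{22}L$), admissible second-order variations of the type~\eqref{eqguo} — that is, $\eta = \Im[\nu]$ with $\nu$ a small needle of height $v$ on $[\tau,\tau+h)$, corrected by a higher-order term in the endpoint directions so as to stay tangent to the constraint manifold — plug them into $\mathcal{D}^2\LL(x)(\eta) \geq 0$, and carry out exactly the estimates of the proof of Lemma~\ref{lem3}. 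The correction term is $O(h^\alpha)$ in $\eta(b)$ and thus contributes only lower-order terms after dividing by $h$, so the limit $h \to 0^+$ again isolates $\frac{(b-\tau)^{\beta-1}}{\Gamma(\beta)} v^\top R(\tau) v \geq 0$, i.e. the Legendre condition; alternatively, this is already contained in Theorem~\ref{thmmain1}(iii) applied after the endpoint has been frozen at the optimal value, since along~\eqref{eqguo} one has $\eta(a) = 0$ and the constraint is respected to first order.

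The main obstacle I anticipate is the passage to the limit in the spike-variation step and the verification that the normal-cone inclusion survives $\varepsilon \to 0$: one must control $\LL(X_{u_\varepsilon,\xi_\varepsilon})$ and $g(X_{u_\varepsilon,\xi_\varepsilon}(a),X_{u_\varepsilon,\xi_\varepsilon}(b))$ as $\varepsilon \to 0$ using the uniform $\L^\infty$ bound on the perturbed controls (so that Proposition~\ref{prop3} keeps the Lagrange integrand essentially bounded and its RL integral continuous up to $t=b$) and the uniform-convergence estimate from~\cite[Property~4 p.242]{bourdin2013}; simultaneously one must show the penalty multiplier $\psi_\varepsilon$ does not degenerate and converges to a genuine element of $-\mathrm{N}_\S[g(x(a),x(b))]$, which is where the closedness and convexity of $\S$ together with the regularity of $g$ are essential. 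A secondary technical point, already flagged in Remark~\ref{remguo1}, is getting the fractional-order estimates right (powers of $h$ of order $h^\alpha$, $h^{2\alpha}$, $h^{\alpha+\beta-1}$ rather than naive $h^2$); the proof of Lemma~\ref{lem3}, relying on Lemma~\ref{lem30}, already supplies the correct bookkeeping, so this part is routine once Lemma~\ref{lem3} is in hand. The full detailed argument is long, which is why — following the paper's own organization — I would relegate it to the appendix.
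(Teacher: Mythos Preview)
Your Ekeland setup, penalized functional, and normalization via submersivity of $g$ are essentially the paper's approach, and your handling of the normal-cone limit is correct. Two minor deviations (the paper uses the $\L^1$-distance on the ball $\L^\infty_R$ rather than an $\L^p$-metric, and it reads off the Euler--Lagrange equation by directly introducing an adjoint $p$ from the needle-variation inequality rather than via Lemma~\ref{lem1}) are harmless.

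The real gap is in your derivation of the Legendre condition. The paper does \emph{not} return to second-order variations of the type~\eqref{eqguo}; the whole point of Section~\ref{secobstruction} and Proposition~\ref{prop765} is that this route is obstructed once a final constraint is present. Your ``alternative'' --- freezing the endpoint and invoking Theorem~\ref{thmmain1}(iii) --- is exactly the case the paper says fails (see Remark~\ref{remcondfixee}: the proof of Lemma~\ref{lem3} adapts only when the final point is free). Your main proposal, a higher-order endpoint correction to~\eqref{eqguo}, would need a genuine second-order admissibility argument (the correction must kill $\eta(b)$, which is of order $h$, while contributing $o(h)$ to the second variation after division by $h$); you have not supplied this, and it is far from routine in the fractional setting.

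What the paper actually does is simpler and is the payoff of the PMP route: the needle-variation step yields, for almost every $\t$, that $u(\t)$ maximizes the Hamiltonian $v \mapsto p(\t)\cdot v - \frac{(b-\t)^{\beta-1}}{\Gamma(\beta)} L(x(\t),v,\t)$ over the closed ball $\BB_{\R^n}(0,R)$. Since $R=\Vert u\Vert_{\L^\infty}+1$, the maximizer $u(\t)$ is interior, so the finite-dimensional second-order necessary condition at an interior maximum gives directly that $\frac{(b-\t)^{\beta-1}}{\Gamma(\beta)}\partial^2_{22}L(x(\t),u(\t),\t)$ is positive semi-definite. That is the missing idea: the Legendre condition comes from the Hamiltonian maximization, not from $\mathcal{D}^2\LL(x)(\eta)\ge 0$.
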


\begin{proof}
As we mentioned before, the standard proof of the Legendre condition in the classical case~$\alpha = 1$ cannot be adapted to the fractional case $0< \alpha < 1$ when dealing with final constraints. We refer to Section~\ref{secobstruction} for details on the obstructions. In order to overcome this difficulty, our idea is to look for an alternative proof of the Legendre condition in the classical case $\alpha = \beta = 1$. It is well-known that optimal control theory can be seen as a generalization of the calculus of variations. Precisely, the Pontryagin maximum principle (in short, PMP), when applied to a calculus of variations problem, allows to derive the corresponding Euler-Lagrange equation, the transversality conditions and also the Legendre condition. As a consequence, our idea is to adapt the well-known proof of the PMP based on the Ekeland variational principle~\cite{Ekeland1974} to Problem~\eqref{minproblem}. We mention here that a PMP in the more general framework of fractional optimal control theory, containing additional functional constraints, has already been derived in~\cite{Bourdin2018} following the same strategy (with the convenient but unnecessary assumption $\beta \geq \alpha$). Since the detailed proof of Theorem~\ref{thm2} is quite long and technical, we took the decision to move it to Appendix~\ref{secproof}.
\end{proof}

A list of comments is in order.

\begin{remark}\label{remarkconditionsterminales}
Let us give the description of some typical situations of mixed initial/final constraints~$g(x(a),x(b)) \in \S$ in Problem~\eqref{minproblem}, and of the corresponding transversality conditions in Theorem~\ref{thm2}:
\begin{itemize}
\item[--] If the initial and final points are free in Problem~\eqref{minproblem}, one may consider $g$ as the identity function and $\S = \R^n \times \R^n$. In that case, the transversality conditions in Theorem~\ref{thm2} are given by
\begin{eqnarray*}
\I^{1-\a}_{b-}  \left[ \dfrac{(b-\cdot)^{\beta-1}}{\Gamma (\beta)} \partial_2 L(x,\CDm [x],\cdot)  \right] (a) & = &  \partial_1 \varphi ( x(a) , x(b)) , \\
 - \I^{1-\a}_{b-}  \left[ \dfrac{(b-\cdot)^{\beta-1}}{\Gamma (\beta)} \partial_2 L(x,\CDm [x],\cdot)  \right] (b) & = & \partial_2 \varphi ( x(a) , x(b)) . 
\end{eqnarray*}
\item[--] If the initial point is fixed to some $x_a \in \R^{n}$ and the final point is free in Problem~\eqref{minproblem}, one may consider $g$ as the identity function and $\S = \{ x_a \} \times \R^n$. In that case, the transversality condition~\eqref{eqtransv1-2} does not provide any additional information, while the transversality condition~\eqref{eqtransv2-2} gives 
$$ - \I^{1-\a}_{b-}  \left[ \dfrac{(b-\cdot)^{\beta-1}}{\Gamma (\beta)} \partial_2 L(x,\CDm [x],\cdot)  \right] (b) =  \partial_2 \varphi (x(a),x(b)) . $$
\item[--] If the initial and final points are fixed respectively to $x_a \in \R^{n}$ and $x_b \in \R^{n}$ in Problem~\eqref{minproblem}, one may consider $g$ as the identity function and $\S = \{ x_a \} \times \{ x_b \}$. In that case, the transversality conditions in Theorem~\ref{thm2} do not provide any additional information.
\item[--] If the initial point is fixed to some $x_a \in \R^n$ and the final point is subject to inequality constraints $G_i (x(b)) \leq 0$ for $i=1,\ldots,q$ for some $q \geq 1$, one may consider $g : \R^n \times \R^n \to \R^{n+q}$ defined by $g(x_1,x_2) : = (x_1,G(x_2))$ for all $x_1$, $x_2 \in \R^n$, where $G = (G_1,\ldots,G_q) : \R^n \rightarrow \R^q$ and $\S = \{ x_a \} \times (\R_-)^q$. If~$G$ is of class $\C^1$ and is regular at any point $x_2 \in G^{-1} ((\R_-)^q)$, then the transversality condition~\eqref{eqtransv1-2} does not provide any additional information, while the transversality condition~\eqref{eqtransv2-2} can be written as 
$$ - \I^{1-\a}_{b-}  \left[ \dfrac{(b-\cdot)^{\beta-1}}{\Gamma (\beta)} \partial_2 L(x,\CDm [x],\cdot)  \right] (b) = \partial_2 \varphi (x(a),x(b)) + \di \sum_{i=1}^q \lambda_i \nabla G_i (x(b)), $$
for some $\lambda_i \geq 0$ for all $i=1,\ldots,q$.
\item[--] If the periodic constraint $x(a)=x(b)$ is considered in Problem~\eqref{minproblem}, one may consider $g : \R^n \times \R^n \to \R^n$ defined by $g(x_1,x_2) := x_2 - x_1$ for all $x_1$, $x_2 \in \R^n$, and $\S = \{ 0_{\R^n} \}$. If moreover there is no Mayer cost in Problem~\eqref{minproblem} (that is, $\varphi = 0$), then the transversality conditions~\eqref{eqtransv1-2} and~\eqref{eqtransv2-2} provide the periodic equality
$$ \I^{1-\a}_{b-}  \left[ \dfrac{(b-\cdot)^{\beta-1}}{\Gamma (\beta)} \partial_2 L(x,\CDm [x],\cdot)  \right] (a) = \I^{1-\a}_{b-}  \left[ \dfrac{(b-\cdot)^{\beta-1}}{\Gamma (\beta)} \partial_2 L(x,\CDm [x],\cdot)  \right] (b). $$
\end{itemize}
In all above situations, we assert that $g$ is regular and then Theorem~\ref{thm2} can be applied.
\end{remark}

\begin{remark}
From Remark~\ref{remarkconditionsterminales}, one can easily see that Theorem~\ref{thm2} encompasses Theorem~\ref{thmmain1} and the related versions discussed in Remark~\ref{remcondfixee}. However, it is worth to note that Theorem~\ref{thmmain1} is derived from standard techniques of calculus of variations, while Theorem~\ref{thm2} (due to the presence of final constraints) requires a different strategy based on the adaptation of a more difficult and quite technical proof from optimal control theory based on the Ekeland variational principle (see Appendix~\ref{secproof}). 
\end{remark}

\begin{remark}
The proof of Theorem~\ref{thm2} only requires that $\varphi$ and $L$ are smooth of class $\C^1$ in order to derive the first-order necessary optimality conditions, and requires moreover that $L$ is twice-differentiable in order to derive the (second-order) Legendre condition.
\end{remark}

\begin{remark}
We emphasize here that the regularity assumption is not restrictive in Theorem~\ref{thm2}. Precisely, let $x \in \K$ be a solution to Problem~\eqref{minproblem}. If $g$ is not regular at $(x(a),x(b))$, one can replace $\K$ by $\overline{\K} := \{ X \in \CACmi \mid X(a)=x(a) \text{ and } X(b)=x(b) \}$. With this new set of constraints, it is clear that $x \in \overline{\K}$ and that $x$ is also a solution to the minimization problem associated to $\overline{\K}$ for which the regularity assumption is obviously satisfied (see Remark~\ref{remarkconditionsterminales}). With this strategy, Theorem~\ref{thm2} can be applied (but the transversality conditions do not provide any additional information, see Remark~\ref{remarkconditionsterminales}).
\end{remark}

\section{Conclusion}

This work provides a rigorous and thoughtful study of a constrained minimization problem of a general Bolza functional, depending on a Mayer cost, as well as on a fractional Lagrange cost of order~$\beta > 0$ involving a Caputo fractional derivative of order $0 < \alpha \leq 1$, under general mixed initial/final constraints. Motivated by our findings of some flaws within proofs of fractional versions of the second-order Legendre necessary optimality condition in previous works in the literature, the main contribution of the present article is to provide a (correct) proof based on the Ekeland variational principle of first- and second-order necessary optimality conditions, namely the Euler-Lagrange equation, the transversality conditions and, of course, the Legendre condition.

\medskip

Furthermore the present paper devotes an entire section (Section~\ref{secobstruction}) to discuss in detail the obstructions that are encountered to derive the Legendre condition in the fractional context with final constraints (which are not encountered in the classical context) and which are at the origin of the errors made in previous works in the literature. These obstructions led us to consider an alternative strategy based on the Ekeland variational principle, and moreover to derive new (nontrivial) results (see Proposition \ref{prop765} and Remark \ref{remfracdiff}) which may be of broad interest for the community of researchers in fractional calculus (not only in fractional calculus of variations), particularly within the fractional differential equations theory.

\medskip

Now a second-order necessary optimality condition in the field of fractional calculus of variations has been (correctly) derived in Theorem~\ref{thmmain1}, the most natural next perspective concerns the obtention of corresponding sufficient optimality conditions. Some papers in that direction have already been published (see, e.g., \cite{Herzallah2012,torres2011} and references therein). Numerous other challenges remain open in fractional calculus of variations. One of the most well-known problems, that still seems to resist the work of researchers, concerns the statement of fractional versions of \textit{constants of motion}. For example several attempts to derive fractional versions of the classical Noether's theorem have been provided in~\cite{Atan,Bourd2013,Frederico2010,Frederico2013,bale}. Unfortunately the results obtained are not fully satisfactory (for several reasons but essentially because they do not provide explicit and/or true constants of motion) and furthermore, like for the fractional extension of the Legendre condition considered in this paper, several papers contain fatal errors which invalidate the results therein (see, e.g., \cite{Ferreira2015} for counterexample and detailed discussion). A similar open challenge can be found in the field of fractional optimal control theory. Indeed it is well-known that the Hamiltonian function associated to a classical autonomous optimal control problem remains constant when evaluated over extremals but, as explained in~\cite[Section~5.1]{Bourdin2018}, the preservation of this property at the fractional level remains an open question. Finally, due to the nonlocal nature of fractional operators, several challenges have to be carried out in order to provide efficient numerical algorithms allowing to solve fractional variational problems (see~\cite[End of Section~5.1]{Bourdin2018} for more details). Note that several works in that direction have already been published (see, e.g., \cite{agrawal2,agrawal3,jelicic,ricardo2,ricardo}).

\section*{Acknowledgements}
Rui A. C. Ferreira was supported by the ``Funda\c{c}\~{a}o para a Ci\^encia e a Tecnologia (FCT)" through the program ``Stimulus of Scientific Employment, Individual Support-2017 Call" with reference CEECIND/00640/2017.

\appendix

\section{Proof of Proposition~\ref{prop765}}\label{app09}
In this section we provide a detailed proof of Proposition~\ref{prop765}. In order to accomplish it, recall that, for all~$\rho \in \R$, there exists a real sequence $(\rho_k)_{k \in \N}$ such that
\begin{equation}\label{eq1}
(1-\xi)^\rho = \sum_{k \in \N} \rho_k \xi^k,
\end{equation}
for all $\xi \in (-1,1)$. We also recall that the series convergence is uniform on all compact subsets included in $(-1,1)$. Moreover, if $\rho \in \R \backslash \N$, the terms $\rho_k$ are all different from zero.

\begin{lemma}\label{lem11}
Let $c > a$ be a real number and $u \in \mathrm{L}^1([a,c],\mathbb{R})$. If
\begin{equation*}
\int_a^c (s-a)^k u(s) \, ds = 0,
\end{equation*}
for all $k \in \N$, then $u=0$.
\end{lemma}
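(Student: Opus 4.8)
This is the classical statement that an $\L^1$ function on a finite interval all of whose moments vanish must be zero. The natural tool is the Weierstrass approximation theorem. A direct attack—approximating $\mathrm{sgn}(u)$ by continuous functions to force $\int_a^c |u| = 0$—would require Lusin's theorem; I prefer to sidestep this by passing to the primitive of $u$, which is continuous, so that the argument can be closed by testing the function against itself.

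\textbf{Step 1: Transfer the moment conditions to the primitive.} Set $U(t) := \int_a^t u(s)\,ds$ for $t \in [a,c]$, so that $U \in \AC([a,c],\R)$ with $U(a) = 0$ and $\dot U = u$ almost everywhere. For $k = 0$ the hypothesis gives $\int_a^c u(s)\,ds = U(c) = 0$. For $k \geq 1$, an integration by parts yields
$$ 0 = \int_a^c (s-a)^k u(s)\,ds = \Big[ (s-a)^k U(s) \Big]_a^c - k\int_a^c (s-a)^{k-1} U(s)\,ds = (c-a)^k U(c) - k\int_a^c (s-a)^{k-1} U(s)\,ds, $$
and since $U(c) = 0$ this forces $\int_a^c (s-a)^{k-1} U(s)\,ds = 0$. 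Hence $\int_a^c (s-a)^m U(s)\,ds = 0$ for every $m \in \N$.

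\textbf{Step 2: From moments to all continuous test functions.} By linearity, $\int_a^c p(s) U(s)\,ds = 0$ for every polynomial $p$ (polynomials in $s-a$ and polynomials in $s$ span the same space on $[a,c]$). Given any $f \in \C([a,c],\R)$, the Weierstrass approximation theorem provides polynomials $p_n \to f$ uniformly on $[a,c]$; since $U$ is bounded (it is continuous on a compact interval),
$$ \left| \int_a^c f(s) U(s)\,ds \right| = \left| \int_a^c \big(f(s) - p_n(s)\big) U(s)\,ds \right| \leq (c-a)\,\|f - p_n\|_\C\,\|U\|_\C \xrightarrow[n\to\infty]{} 0, $$
so $\int_a^c f(s) U(s)\,ds = 0$ for every $f \in \C([a,c],\R)$.

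\textbf{Step 3: Conclusion.} Taking $f = U$, which is admissible since $U$ is continuous, gives $\int_a^c U(s)^2\,ds = 0$, hence $U^2 = 0$ almost everywhere and, by continuity, $U \equiv 0$ on $[a,c]$. Therefore $u = \dot U = 0$ almost everywhere on $[a,c]$, which is the claim. The only genuinely nontrivial ingredient is the Weierstrass theorem; everything else is routine, and the passage to the primitive $U$ is precisely what makes the final "test against itself" step legitimate without invoking any deeper measure-theoretic density result.
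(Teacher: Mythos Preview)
Your proof is correct, and it takes a genuinely different route from the paper's. The paper dispatches the lemma in one line: by Weierstrass density of polynomials in $\C([a,c],\R)$, the hypothesis yields $\int_a^c f(s)u(s)\,ds = 0$ for every continuous $f$ (this passage is legitimate since $u \in \L^1$ and $p_n \to f$ uniformly), and then invokes the standard result \cite[Corollary~4.24]{brezis} that an $\L^1$ function annihilating all continuous test functions must vanish a.e. You instead transfer the moment conditions to the absolutely continuous primitive $U$, apply Weierstrass to $U$ rather than to $u$, and close by testing $U$ against itself. The gain is that your argument is entirely self-contained: the step ``$\int fu = 0$ for all $f \in \C$ implies $u = 0$ in $\L^1$'' that the paper outsources to Brezis is itself typically proved via Lusin's theorem or an $\L^1$-density argument, and you bypass it completely by working with a continuous object from the start. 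The cost is the extra integration-by-parts bookkeeping in Step~1. Both arguments rest on the same nontrivial ingredient (Weierstrass), so neither is deeper than the other; yours is simply more elementary at the expense of a few more lines.
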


\begin{proof}
This result easily follows from the density of polynomial functions in $\C([a,c],\mathbb{R})$ and from~\cite[Corollary~4.24 p.110]{brezis}.
\end{proof}

\begin{lemma}\label{lem22}
Let $c > a$ be a real number and $u \in \mathrm{L}^1([a,c],\mathbb{R})$. Let us consider the function
$$ \fonction{\Psi}{(c,+\infty)}{\mathbb{R}}{t}{\Psi(t) := \di \int_a^c (t-s)^\mu u(s) \, ds ,} $$
where $\mu \in \R \backslash \N$. If the function $\Psi$ is polynomial over a subinterval $I \subset (c,+\infty)$ with a nonempty interior, then $u = 0$.
\end{lemma}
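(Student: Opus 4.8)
The plan is to reduce the statement to Lemma~\ref{lem11} by expanding the kernel $(t-s)^\mu$ in a power series and exploiting the assumed polynomial behaviour of $\Psi$ on the subinterval $I$. First I would fix a point $t_0 \in (c,+\infty)$ lying in the interior of $I$ and write, for $s \in [a,c]$,
$$ (t-s)^\mu = (t_0-s)^\mu \left( 1 - \frac{t_0-t}{t_0-s} \right)^\mu = (t_0 - s)^\mu \sum_{k \in \N} \mu_k \left( \frac{t_0-t}{t_0-s} \right)^k, $$
using the expansion~\eqref{eq1} with $\rho = \mu$; this is legitimate for $t$ close enough to $t_0$, since then $\left| \frac{t_0-t}{t_0-s} \right| \leq \frac{|t_0-t|}{t_0-c} < 1$ uniformly in $s \in [a,c]$, and the convergence is uniform on the relevant compact set. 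Substituting into the definition of $\Psi$ and interchanging sum and integral (justified by the uniform convergence together with $u \in \mathrm{L}^1$) gives, for $t$ in a neighbourhood of $t_0$,
$$ \Psi(t) = \sum_{k \in \N} \mu_k (t_0-t)^k \int_a^c (t_0-s)^{\mu-k} u(s) \, ds = \sum_{k \in \N} c_k (t_0 - t)^k, $$
where $c_k := \mu_k \int_a^c (t_0-s)^{\mu-k} u(s) \, ds$. Thus $\Psi$ is represented by a convergent power series in the variable $(t_0-t)$ near $t_0$.

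Next I would use the hypothesis that $\Psi$ coincides with a polynomial on $I$. A polynomial of degree $N$, re-expanded around $t_0$, is itself a polynomial in $(t_0-t)$ of degree $N$; since a convergent power series representation is unique, we must have $c_k = 0$ for all $k > N$. Because $\mu \in \R \backslash \N$, every coefficient $\mu_k$ in~\eqref{eq1} is nonzero, so this forces
$$ \int_a^c (t_0 - s)^{\mu - k} u(s) \, ds = 0 \quad \text{for all } k > N. $$
Here I would make a change of viewpoint: set $v(s) := (t_0-s)^{\mu - N - 1} u(s)$, which is again in $\mathrm{L}^1([a,c],\R)$ (the weight $(t_0-s)^{\mu-N-1}$ is bounded on $[a,c]$ since $t_0 > c$), and observe that the displayed identities say $\int_a^c (t_0-s)^{j} v(s)\, ds = 0$ for all $j \in \N$. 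Since $(t_0-s) = (t_0-a) - (s-a)$, the monomials $(t_0-s)^j$ span the same space as the monomials $(s-a)^j$, so $\int_a^c (s-a)^j v(s)\, ds = 0$ for all $j \in \N$ as well. Lemma~\ref{lem11} then yields $v = 0$, hence $u = 0$ since the weight does not vanish on $[a,c]$.

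The main obstacle I anticipate is handling the interchange of summation and integration together with the domain-of-validity bookkeeping: the binomial expansion of $(t-s)^\mu$ only converges for $t$ sufficiently close to $t_0$ (not on all of $I$), so I must be careful to choose a small enough neighbourhood of an interior point $t_0$ of $I$ on which both the uniform convergence in $s$ holds and $\Psi$ is still polynomial, and then argue that polynomiality on that small neighbourhood already suffices. A secondary point requiring care is the passage from "$c_k = 0$ for $k > N$" to the conclusion: one should note that it is not necessary to control the finitely many low-order coefficients, since killing all but finitely many moments of $v$ is still enough to run Lemma~\ref{lem11} after the reindexing $s \mapsto (t_0-s)^{\mu-N-1}u(s)$. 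Everything else is routine.
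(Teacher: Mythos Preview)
Your overall strategy is sound and close in spirit to the paper's argument, but there is a concrete algebraic slip in the final reduction. With $v(s) = (t_0-s)^{\mu-N-1} u(s)$ one has $(t_0-s)^{\mu-k} u(s) = (t_0-s)^{N+1-k} v(s)$, and for $k > N$ the exponent $N+1-k$ is \emph{nonpositive}, not nonnegative. So the identities you actually obtain are $\int_a^c (t_0-s)^{-j} v(s)\,ds = 0$ for all $j \in \N$, not $\int_a^c (t_0-s)^{j} v(s)\,ds = 0$; no fixed choice of weight in front of $u$ can flip this, since the exponents $\mu-k$ run to $-\infty$. Consequently the appeal to Lemma~\ref{lem11} via the monomials $(s-a)^j$ does not go through as written. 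The gap is reparable: since $t_0 > c$, the map $s \mapsto \frac{1}{t_0-s}$ is a smooth change of variable on $[a,c]$, and the vanishing of all moments in $\frac{1}{t_0-s}$ gives $v=0$ by the same density-of-polynomials argument underlying Lemma~\ref{lem11} (or by Stone--Weierstrass on the algebra generated by $1$ and $\frac{1}{t_0-s}$). But this extra step must be spelled out.

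For comparison, the paper avoids this detour by two devices. First it differentiates $\Psi$ enough times on $I$ to kill the polynomial part outright, obtaining $\int_a^c (t-s)^{\mu-r} u(s)\,ds = 0$ for all $t \in I$ with $r$ larger than the degree. Second, and this is the key point, it factors $(t-s)^{\mu-r} = (t-a)^{\mu-r}\bigl(1 - \frac{s-a}{t-a}\bigr)^{\mu-r}$, expanding in the variable $\frac{s-a}{t-a}$ rather than in $\frac{t_0-t}{t_0-s}$. This makes the moments $\int_a^c (s-a)^k u(s)\,ds$ appear directly as coefficients of a power series in $T=\frac{1}{t-a}$ that vanishes on an interval, so Lemma~\ref{lem11} applies without any further change of variable. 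Your expansion around $t_0$ in the $t$-variable is legitimate, but it produces integrals $\int_a^c (t_0-s)^{\mu-k} u(s)\,ds$ with exponents decreasing to $-\infty$, which is precisely why the additional step is needed.
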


\begin{proof}
Without loss of generality, we can assume that $I = [c_1,c_2]$ is compact with $c < c_1 < c_2$. From the LDC theorem, one can easily see that $\Psi$ is of class~$\mathrm{C}^\infty$ with 
$$ \Psi^{(r)}(t) = \mu (\mu - 1) \ldots (\mu - r+1) \di \int_a^c (t-s)^{\mu-r} u(s) \, ds, $$
for all $t > c$ and all $r \geq 1$. In the sequel we fix some $r \geq 1$ larger than the degree of $\Psi$ (polynomial over $I$) plus one. Since~$\mu \in \R \backslash \N$, we get that
$$ \di \int_a^c (t-s)^{\mu-r} u(s) \, ds = 0, $$
and thus
$$ \di \int_a^c \left( 1 - \dfrac{s-a}{t-a} \right)^{\mu-r} u(s) \, ds = 0, $$
for all $t \in I$. From Equality~\eqref{eq1} (with $\rho := \mu-r \in \R \backslash \N$) and the uniform convergence of the power series (since~$0 \leq \frac{s-a}{t-a} \leq \frac{c-a}{c_1 - c } <1$ for all~$(t,s) \in I \times [a,c]$), we get that
$$ \di \sum_{k \in \N} \dfrac{\rho_k}{(t-a)^k} \int_a^c (s-a)^k u(s) \, ds = 0, $$
for all $t \in I$. Finally, using the change of variable $T=\frac{1}{t-a}$, we can write that $ \sum_{k \in \N} \lambda_k T^k = 0$ for all $T \in [\frac{1}{c_2 - a},\frac{1}{c_1-a}]$, where $ \lambda_k := \rho_k \int_a^c (s-a)^k u(s) \, ds $ for all~$k \in \N$. Since the zeros of a nonzero power series are isolated, we deduce that $\lambda_k = 0$ for all $k \in \N$. Since all~$\rho_k$ are different from zero, Lemma~\ref{lem11} concludes the proof.
\end{proof}

We are now in a position to prove Proposition~\ref{prop765}. Actually we can even prove the more general following statement.

\begin{proposition}\label{prop765bis}
Let $0 < \a <1$ and $x \in \CACm$. If there exist two real numbers $a \leq c < d \leq b$ such that:
\begin{enumerate}
\item[\rm{(i)}] $x$ is polynomial over $[c,d]$;
\item[\rm{(ii)}] $\CDm [x](t) = 0_{\R^n}$ for almost every $t \in [c,d]$;
\end{enumerate}
then $x$ is constant over $[a,d]$.
\end{proposition}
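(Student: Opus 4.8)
The plan is to combine the integral representation of Caputo differentiable functions (Proposition~\ref{propcompC}) with Lemma~\ref{lem22}, the genuine analytic work being already carried out in the latter. Set $u := \CDm[x] \in \L^1$. By Proposition~\ref{propcompC} we have $x(t) = x(a) + \Im[u](t)$ for a.e.\ $t \in [a,b]$, that is,
$$ x(t) = x(a) + \frac{1}{\Gamma(\a)} \int_a^t (t-s)^{\a-1} u(s) \, \d s \qquad \text{for a.e. } t \in [a,b]. $$
For $t \in (c,d]$ I would split this integral at $s = c$: by hypothesis~{\rm (ii)}, $u = 0_{\R^n}$ a.e.\ on $[c,d] \supseteq [c,t]$, so the contribution of $[c,t]$ vanishes and
$$ x(t) = x(a) + \frac{1}{\Gamma(\a)} \int_a^c (t-s)^{\a-1} u(s) \, \d s \qquad \text{for a.e. } t \in (c,d). $$

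Next comes the use of hypothesis~{\rm (i)}. For each component $i \in \{1,\dots,n\}$, define $\Psi_i(t) := \int_a^c (t-s)^{\a-1} u_i(s)\, \d s$, which is continuous (even of class $\C^\infty$, as in the proof of Lemma~\ref{lem22}) on $(c,+\infty)$ by the LDC theorem. The last display reads $x(t) = x(a) + \frac{1}{\Gamma(\a)}\Psi(t)$ for a.e.\ $t \in (c,d)$; since $x$ is continuous on $[a,b]$ and both sides are continuous on the interval $(c,d)$, the identity in fact holds for \emph{every} $t \in (c,d)$. As $x$ is polynomial on $[c,d]$ by~{\rm (i)}, each $\Psi_i$ therefore coincides with a polynomial on $(c,d)$, a subinterval of $(c,+\infty)$ with nonempty interior. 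Applying Lemma~\ref{lem22} with $\mu := \a - 1$ — which lies in $\R \setminus \N$ precisely because $0 < \a < 1$ — to each $u_i \in \L^1([a,c],\R)$ yields $u_i = 0$ on $[a,c]$, hence $u = 0_{\R^n}$ a.e.\ on $[a,c]$. Combined with~{\rm (ii)}, this gives $u = 0_{\R^n}$ a.e.\ on $[a,d]$.

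Finally, substituting $u = 0_{\R^n}$ a.e.\ on $[a,d]$ back into the integral representation gives $x(t) = x(a)$ for a.e.\ $t \in [a,d]$, and by continuity of $x$ for every $t \in [a,d]$; thus $x$ is constant over $[a,d]$, which is Proposition~\ref{prop765bis}. Proposition~\ref{prop765} then follows as the special case where $x$ vanishes on $[c,d]$ (i.e.\ is the zero polynomial there), so that the constant value equals $x(d) = 0_{\R^n}$ and $x \equiv 0_{\R^n}$ on $[a,d]$.

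The substantive difficulty is not in this reduction: it is entirely concentrated in Lemma~\ref{lem22}, which rests on the power-series expansion~\eqref{eq1} of $(1-\xi)^{\mu}$, the nonvanishing of its coefficients when $\mu \notin \N$, the isolation of zeros of a nonzero power series, and Lemma~\ref{lem11}. In the reduction above the only points that require care are the legitimacy of splitting the integral at $c$ — which hinges on $[c,t]$ lying inside the set $[c,d]$ where $u$ vanishes — and the upgrade from an almost-everywhere identity to a genuine pointwise one, which is exactly what licenses the appeal to Lemma~\ref{lem22}.
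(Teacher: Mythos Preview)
Your proof is correct and follows essentially the same route as the paper's: both use Proposition~\ref{propcompC} to write $x(t) = x(a) + \Im[\CDm[x]](t)$, drop the $[c,t]$-portion of the integral via hypothesis~(ii), and then invoke Lemma~\ref{lem22} with $\mu = \alpha - 1$ to conclude that $\CDm[x]$ vanishes on $[a,c]$. The only cosmetic differences are that the paper reduces to $n=1$ whereas you argue componentwise, and you are slightly more explicit about upgrading the a.e.\ identity to a pointwise one before applying Lemma~\ref{lem22}.
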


\begin{proof}
Without loss of generality, we assume in this proof that $n=1$. From Proposition~\ref{propcompC}, it holds that $ x(t) = x(a) + \Im [ \CDm [x] ](t)$ for all $t \in [a,b]$, and thus
$$ x(t) = x(a) + \di \int_a^c \dfrac{(t-s)^{\a-1}}{\Gamma(\a)}  \CDm [x](s) \, ds, $$
for all $t \in [c,d]$. Let us denote by $u \in \L^1([a,c],\R)$ the restriction of $\CDm [x]$ over $[a,c]$. From the hypothesis, one can easily deduce that the function
$$ \fonction{\Psi}{(c,+\infty)}{\mathbb{R}}{t}{\Psi(t) := \di \int_{a}^{c} (t-s)^{\alpha-1} u(s) \, ds ,} $$
is polynomial over $(c,d]$. From Lemma~\ref{lem22}, we deduce that $u=0$ and thus $\CDm [x](t) = 0$ for almost every $t \in [a,d]$. We deduce that $x(t)=x(a)$ for all $t \in [a,d]$ which completes the proof.
\end{proof}

\section{Proof of Theorem~\ref{thm2}}\label{secproof}
Our strategy of proof is based on the Ekeland variational principle~\cite{Ekeland1974}. Let us enunciate hereafter a simplified version (but sufficient for our purposes).

\begin{proposition}[Ekeland variational principle]\label{propekeland}
Let $(\E,\d_\E)$ be a complete metric space and $\J : \E \rightarrow \R^+$ be a continuous nonnegative map. Let $\eps>0$ and $\lambda \in \E$ such that $\J(\lambda) = \eps$. Then there exists $\lambda_\eps \in \E$ such that $\d_\E ( \lambda_\eps , \lambda ) \leq \sqrt{\eps}$ and $-\sqrt{\eps} \; \d_\E ( \lambda' , \lambda_\eps ) \leq \J(\lambda')-\J(\lambda_\eps)$ for all $\lambda' \in \E$.
\end{proposition}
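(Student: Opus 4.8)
This is the classical Ekeland variational principle, stated here in a simplified form that suffices for our purposes. Since $\J$ is nonnegative, its infimum over $\E$ is $\geq 0$, so the hypothesis $\J(\lambda)=\eps$ automatically makes $\lambda$ an $\eps$-approximate minimizer of $\J$ over $\E$; the assertion is thus the instance of Ekeland's theorem in which the perturbation parameter is chosen equal to $\sqrt{\eps}$. My plan is to construct recursively a decreasing sequence $S_0 \supseteq S_1 \supseteq \cdots$ of nonempty closed subsets of $\E$ whose diameters tend to $0$, so that completeness of $(\E,\d_\E)$ produces a unique point $\lambda_\eps \in \bigcap_{n} S_n$, from which the three asserted properties will be read off using the membership $\lambda_\eps \in S_n$ for all $n$.

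For the construction I would associate with each $\mu \in \E$ the slice
\[
S(\mu) := \big\{ \nu \in \E \mid \J(\nu) + \sqrt{\eps}\,\d_\E(\mu,\nu) \leq \J(\mu) \big\},
\]
which contains $\mu$ and is closed because $\J$ and $\d_\E$ are continuous. Then I would set $\lambda_0 := \lambda$ and $S_0 := S(\lambda_0)$ and, inductively, given $\lambda_n$ with $S_n := S(\lambda_n)$, choose $\lambda_{n+1} \in S_n$ such that
\[
\J(\lambda_{n+1}) \leq \inf_{S_n} \J + \tfrac{1}{2}\Big( \J(\lambda_n) - \inf_{S_n} \J \Big),
\]
which is possible because $\lambda_n \in S_n$, and put $S_{n+1} := S(\lambda_{n+1})$.

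Next I would verify three facts. (a) The nesting $S_{n+1} \subseteq S_n$ follows by adding the defining inequality of $\nu \in S_{n+1}$ to that of $\lambda_{n+1} \in S_n$ and applying the triangle inequality for $\d_\E$. (b) The diameters shrink: for any $\nu \in S_n$ one has $\sqrt{\eps}\,\d_\E(\lambda_n,\nu) \leq \J(\lambda_n) - \J(\nu) \leq \J(\lambda_n) - \inf_{S_n}\J$, and a short induction using the halving rule, the inclusion $S_{n+1} \subseteq S_n$ (hence $\inf_{S_{n+1}}\J \geq \inf_{S_n}\J$) and $\inf_{\E}\J \geq 0$ gives $\J(\lambda_n) - \inf_{S_n}\J \leq 2^{-n}\eps$, so $\mathrm{diam}(S_n) \leq 2^{1-n}\sqrt{\eps} \to 0$. (c) Being a nested sequence of nonempty closed sets of vanishing diameter in the complete space $(\E,\d_\E)$, the $S_n$ have a one-point intersection $\bigcap_{n} S_n = \{\lambda_\eps\}$.

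Finally I would read off the conclusions. From $\lambda_\eps \in S_0 = S(\lambda)$ and $\J(\lambda_\eps) \geq 0$ one gets $\sqrt{\eps}\,\d_\E(\lambda,\lambda_\eps) \leq \J(\lambda) - \J(\lambda_\eps) \leq \eps$, hence $\d_\E(\lambda_\eps,\lambda) \leq \sqrt{\eps}$. For the variational inequality I would argue by contradiction: if some $\lambda' \in \E$ satisfied $\J(\lambda') + \sqrt{\eps}\,\d_\E(\lambda',\lambda_\eps) < \J(\lambda_\eps)$, then $\lambda' \in S(\lambda_\eps)$ and $\lambda' \neq \lambda_\eps$, and since $\lambda_\eps \in S_n$ for every $n$ the same adding-and-triangle-inequality trick as in (a) would give $\lambda' \in S_n$ for all $n$, so $\lambda' \in \bigcap_{n} S_n = \{\lambda_\eps\}$, a contradiction; therefore $-\sqrt{\eps}\,\d_\E(\lambda',\lambda_\eps) \leq \J(\lambda') - \J(\lambda_\eps)$ for every $\lambda' \in \E$ (in fact with strict inequality whenever $\lambda' \neq \lambda_\eps$). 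I expect the only genuinely delicate point to be calibrating the recursion so that the gaps $\J(\lambda_n) - \inf_{S_n}\J$ decay geometrically, since this is exactly what forces $\mathrm{diam}(S_n) \to 0$; everything else is triangle-inequality bookkeeping together with an appeal to completeness. A non-constructive alternative would be to obtain $\lambda_\eps$ as a maximal element of $\E$ for the partial order defined by $\nu \preceq \mu \Leftrightarrow \J(\nu) + \sqrt{\eps}\,\d_\E(\mu,\nu) \leq \J(\mu)$ via Zorn's lemma, but the iterative argument above is more transparent.
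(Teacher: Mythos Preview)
Your argument is correct: it is precisely the standard iterative construction of the Ekeland point via nested closed ``slices'' $S(\mu)$, with the halving rule producing geometric decay of the defects and hence vanishing diameters, after which Cantor's intersection theorem and the triangle inequality yield the two asserted conclusions. Every step checks out as written.

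The paper, however, does not prove this proposition at all: it merely states this simplified form of the Ekeland variational principle and refers to Ekeland's original article~\cite{Ekeland1974}, treating the result as a classical black box to be applied in the proof of Theorem~\ref{thm2}. So your contribution here is a genuine, self-contained addition rather than a rederivation of something the authors worked out. This is perfectly reasonable if your goal is to make the manuscript self-contained; just be aware that, from the authors' point of view, the statement is quoted rather than proved.
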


In Section~\ref{secconvex} we give some recalls about convex analysis. In Section~\ref{secsens} we investigate the sensitivity analysis of the Bolza functional~$\LL$. Finally the proof of Theorem~\ref{thm2} is detailed in Section~\ref{secproofproof} by applying the Ekeland variational principle on a penalized functional.

\subsection{Basics of convex analysis}\label{secconvex}
Let $\d_\S : \R^j \to \R_{+}$ denote the standard distance function to the nonempty closed convex subset $\S \subset \R^j$ defined by $\d_\S(z):= \inf_{z' \in \S} \Vert z-z' \Vert_{\R^j}$ for all $z \in \R^j$. We recall that, for all $z \in \R^j$, there exists a unique element $\PP_\S(z) \in \S$ (called the \textit{projection} of~$z$ onto~$\S$) such that $\d_\S(z)=\Vert z-\PP_\S(z) \Vert_{\R^j}$. It can easily be shown that the map $\PP_\S:\R^j \rightarrow \S$ is $1$-Lipschitz continuous. Moreover it holds that~$( z- \PP_\S(z)) \cdot (z'-\PP_\S(z)) \leq0$ for all $z'\in\S$, that is, $z-\PP_\S(z)\in \mathrm{N}_\S[\PP_\S(z)]$ for all $z \in \R^j$. Let us recall the two following required lemmas, whose proofs are detailed for the reader's convenience.

\begin{lemma}\label{lemmeprojection}
Let $(z_k)_{k\in\N}$ be a sequence in $\R^j$ converging to some point $z\in\S$ and let $(\zeta_k)_{k\in\N}$ be a positive real sequence. If $\zeta_k(z_k-\PP_\S(z_k))$ converges to some $\overline{z} \in \R^j$, then $\overline{z} \in \mathrm{N}_\S[z]$.
\end{lemma}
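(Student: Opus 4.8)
The plan is to prove Lemma~\ref{lemmeprojection} directly from the defining inequality of the projection, passing to the limit. Recall that for every $k \in \N$ and every $z'' \in \S$ we have the obtuse-angle inequality $(z_k - \PP_\S(z_k)) \cdot (z'' - \PP_\S(z_k)) \leq 0$. Multiplying by the positive scalar $\zeta_k$ gives $\zeta_k (z_k - \PP_\S(z_k)) \cdot (z'' - \PP_\S(z_k)) \leq 0$ for all $z'' \in \S$.

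The key step is then to let $k \to \infty$ in this inequality with $z''$ fixed. The first factor $\zeta_k (z_k - \PP_\S(z_k))$ converges to $\overline{z}$ by hypothesis. For the second factor, I would use the $1$-Lipschitz continuity of $\PP_\S$ together with the fact that $z \in \S$, so $\PP_\S(z) = z$: this yields $\Vert \PP_\S(z_k) - z \Vert_{\R^j} = \Vert \PP_\S(z_k) - \PP_\S(z) \Vert_{\R^j} \leq \Vert z_k - z \Vert_{\R^j} \to 0$, hence $\PP_\S(z_k) \to z$. Therefore $z'' - \PP_\S(z_k) \to z'' - z$, and the product of the two factors converges to $\overline{z} \cdot (z'' - z)$. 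Since each term of the sequence is $\leq 0$, the limit satisfies $\overline{z} \cdot (z'' - z) \leq 0$.

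Finally, since $z'' \in \S$ was arbitrary, the inequality $\overline{z} \cdot (z'' - z) \leq 0$ holds for all $z'' \in \S$, which is exactly the definition of $\overline{z} \in \mathrm{N}_\S[z]$. This completes the proof.

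I do not expect any real obstacle here: the only subtlety is recognizing that one must invoke continuity of $\PP_\S$ (so that $\PP_\S(z_k) \to \PP_\S(z) = z$) rather than trying to control $\PP_\S(z_k)$ directly, and that the positivity of $\zeta_k$ is needed only to preserve the sign of the inequality when multiplying through. The argument is a routine limiting argument once the obtuse-angle characterization of the projection, recalled just above the lemma, is in hand.
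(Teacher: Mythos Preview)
Your proof is correct and follows essentially the same approach as the paper: start from the obtuse-angle inequality $(z_k-\PP_\S(z_k))\cdot(z''-\PP_\S(z_k))\leq 0$, multiply by $\zeta_k>0$, and pass to the limit using $\PP_\S(z_k)\to z$. The paper's version is terser (it simply says ``since $z\in\S$'' when passing to the limit), while you spell out the use of the $1$-Lipschitz continuity of $\PP_\S$ and $\PP_\S(z)=z$; but the argument is the same.
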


\begin{proof}
Since~$z_k - \PP_\S(z_k) \in \mathrm{N}_\S[\PP_\S(z_k)]$ and~$\zeta_k > 0$ for all~$k \in \N$, we obtain that~$ \zeta_k (z_k - \PP_\S(z_k)) \cdot (z' - \PP_\S(z_k)) \leq 0$ for all~$z' \in \S$ and all~$k \in \N$. Passing to the limit~$k \to \infty$, and since~$z \in \S$, we obtain that~$ \overline{z} \cdot (z' - z) \leq 0$ for all~$z' \in \S$ which exactly means that~$\overline{z} \in \mathrm{N}_\S[z]$.
\end{proof}

\begin{lemma}\label{lemmenormalcone}
The map 
$$ \fonction{\d^2_\S}{\R^j}{\R_+}{z}{\d^2_\S(z) := \d_\S(z)^2 ,}$$ 
is Fr\'echet-differentiable on $\R^j$, and its differential $\mathcal{D}\d^2_\S(z)$ at every $z \in \R^j$ can be expressed as 
$$ \mathcal{D}\d^2_\S(z)(z') = 2  ( z-\PP_\S(z) ) \cdot z'  , $$ 
for all $z' \in\R^j$.
\end{lemma}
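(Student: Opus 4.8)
The plan is to prove, for every $z$, $z' \in \R^j$, the two-sided estimate
$$ -\Vert z' \Vert_{\R^j}^2 \;\leq\; \d^2_\S(z+z') - \d^2_\S(z) - 2 ( z - \PP_\S(z) ) \cdot z' \;\leq\; \Vert z' \Vert_{\R^j}^2 , $$
from which the claim follows at once: the displayed remainder is $O(\Vert z' \Vert_{\R^j}^2) = o(\Vert z' \Vert_{\R^j})$ as $z' \to 0_{\R^j}$, so $\d^2_\S$ is Fr\'echet-differentiable at $z$ with $\mathcal{D}\d^2_\S(z)(z') = 2 ( z - \PP_\S(z) ) \cdot z'$, and the map $z' \mapsto 2 ( z - \PP_\S(z) ) \cdot z'$ is manifestly linear and continuous on $\R^j$.

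For the upper inequality I would use that $\PP_\S(z) \in \S$ is an admissible competitor for the infimum defining $\d_\S(z+z')$, hence $\d_\S(z+z') \leq \Vert z + z' - \PP_\S(z) \Vert_{\R^j}$; squaring and expanding $\Vert z + z' - \PP_\S(z) \Vert_{\R^j}^2 = \d^2_\S(z) + 2 ( z - \PP_\S(z) ) \cdot z' + \Vert z' \Vert_{\R^j}^2$ gives the right-hand bound directly.

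For the lower inequality I would symmetrically use that $\PP_\S(z+z') \in \S$ is admissible for $\d_\S(z)$, so $\d^2_\S(z) \leq \Vert z - \PP_\S(z+z') \Vert_{\R^j}^2$. Writing $w := (z+z') - \PP_\S(z+z')$, which satisfies $\Vert w \Vert_{\R^j} = \d_\S(z+z')$, one has $z - \PP_\S(z+z') = w - z'$, and expanding the square yields $\d^2_\S(z) \leq \d^2_\S(z+z') - 2 w \cdot z' + \Vert z' \Vert_{\R^j}^2$. I would then decompose $w = ( z - \PP_\S(z) ) + z' + ( \PP_\S(z) - \PP_\S(z+z') )$ and invoke the $1$-Lipschitz continuity of $\PP_\S$ (recalled just before the statement) to bound $\vert ( \PP_\S(z) - \PP_\S(z+z') ) \cdot z' \vert \leq \Vert z' \Vert_{\R^j}^2$; substituting and rearranging produces the left-hand bound.

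The only mildly delicate point is the bookkeeping of the cross terms in this last computation; everything else is routine expansion of squared norms, and the sole structural inputs used are the existence and uniqueness of the projection $\PP_\S$ together with its $1$-Lipschitz continuity, both recalled in the paragraph preceding the lemma. In particular, convexity of $\S$ enters only through these two facts.
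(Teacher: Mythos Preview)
Your proposal is correct and coincides with the paper's own proof: both obtain the upper bound by using $\PP_\S(z)$ as a competitor for $\d_\S(z+z')$ and expanding, and the lower bound by using $\PP_\S(z+z')$ as a competitor for $\d_\S(z)$, then absorbing the discrepancy $\PP_\S(z)-\PP_\S(z+z')$ via Cauchy--Schwarz and the $1$-Lipschitz continuity of $\PP_\S$. Your introduction of $w$ is merely a notational variant of the same computation.
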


\begin{proof}
Let~$z \in \R^j$ and let us prove that $\d^2_\S$ is Fr\'echet-differentiable at~$z$ with $ \mathcal{D}\d^2_\S(z)(z') = 2  ( z-\PP_\S(z) ) \cdot z'$. One has
$$
\d^2_\S(z+z') - \d^2_\S(z) 
 \leq \Vert z+z' - \PP_\S(z) \Vert^2 - \Vert z - \PP_\S (z) \Vert^2 =  2  ( z-\PP_\S(z) ) \cdot z' + \Vert z' \Vert^2, 
$$
and, from Cauchy-Schwarz inequality and~$1$-Lipschitz continuity of~$\PP_\S$, one gets
\begin{multline*}
\d^2_\S(z) - \d^2_\S(z+z') 
 \leq \Vert z - \PP_\S(z+z') \Vert^2 - \Vert z+z' - \PP_\S (z+z') \Vert^2 \\ =  -2  ( z-\PP_\S(z+z') ) \cdot z' - \Vert z' \Vert^2 = -2  ( z-\PP_\S(z) ) \cdot z' + 2 ( \PP_\S(z+z')-\PP_\S(z)) \cdot z' - \Vert z' \Vert^2 \\
  \leq -2  ( z-\PP_\S(z) ) \cdot z' + \Vert z' \Vert^2 ,
\end{multline*}
for all~$z' \in \R^j$. Using both inequalities, the proof is complete.
\end{proof}

\subsection{Sensitivity analysis of the Bolza functional}\label{secsens}
In the proof of Theorem~\ref{thm2} (see Section~\ref{secproofproof} below), we denote:
\begin{itemize}
\item[--] by $r_\alpha$ some real number satisfying $r_\alpha > \frac{1}{\alpha}$ and by $r'_\alpha := \frac{r_\alpha}{r_\alpha - 1}$ the classical conjugate of $r_\alpha$ satisfying $\frac{1}{r_\alpha} + \frac{1}{r'_\alpha} = 1$;
\item[--] and, for all $(u,y) \in \L^\infty \times \R^n$, by $x(\cdot,u,y) \in {}_\cc \AC^{\a,\infty}_{a+}$ the function defined by
$$ x(t,u,y) := y + \Im [u](t), $$
for all $t \in [a,b]$;
\item[--] and, for all $(u,y) \in \L^\infty \times \R^n$, by $\PPP(u,y)$ the set of Lebesgue points $\t \in (a,b)$ of both the functions~$u$ and $L(x(\cdot,u,y),u,\cdot)$.
\end{itemize}

\begin{remark}
Note that $r'_\alpha (\alpha-1) +1 > 0$.
\end{remark}

\begin{remark}
Let $x \in \L^1$. From Proposition~\ref{propcompC} and Remark~\ref{rempropcompC2}, note that $x \in {}_\cc \AC^{\a,\infty}_{a+}$ if and only if there exists $(u,y) \in \L^\infty \times \R^n$ such that $x=x(\cdot,u,y)$. In that case, the couple $(u,y)$ is unique and is given by $u = \CDm[x]$ and~$y=x(a)$.
\end{remark}

\begin{remark}
For all $(u,y) \in \L^\infty \times \R^n$, note that the set $\PPP(u,y)$ is of full measure in $[a,b]$.
\end{remark}

We introduce the set
$$ \L^\infty_R := \L^\infty \Big( [a,b] , \BB_{\R^n}(0_{\R^n},R ) \Big), $$
for all $R \geq 0$, where $\BB_{\R^n}(0_{\R^n},R )$ denotes the standard closed ball of $\R^n$ centered at the origin $0_{\R^n}$ with radius $R \geq 0$. We endow the set $\L^\infty_R \times \R^n$ with the distance
$$ \d_{ \L^\infty_R \times \R^n } \Big( (u_2,y_2) , (u_1 , y_1) \Big) := \Vert u_2 - u_1 \Vert_{\L^1} + \Vert y_2 - y_1 \Vert_{\R^n}, $$
for all $(u_1,y_1)$, $(u_2,y_2) \in \L^\infty_R \times \R^n$.

\begin{lemma}\label{lemcomplete}
Let $R \geq 0$. The following assertions are true:
\begin{enumerate}
\item[\rm{(i)}] The metric space $( \L^\infty_R \times \R^n , \d_{ \L^\infty_R \times \R^n } )$ is complete;
\item[\rm{(ii)}] The map $(u,y) \in \L^\infty_R \times \R^n \longmapsto x(\cdot,u,y) \in \C $ is continuous;
\item[\rm{(iii)}] The map
$$ \fonction{\Phi_R}{\L^\infty_R \times \R^n}{\R}{(u,y)}{\Phi_R (u,y) := \LL (x(\cdot,u,y)),} $$
is continuous.
\end{enumerate}
\end{lemma}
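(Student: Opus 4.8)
\textbf{Proof plan for Lemma~\ref{lemcomplete}.}

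The plan is to establish the three assertions in order, since completeness feeds naturally into the continuity statements. For \rm{(i)}, I would use the fact that $(\R^n, \Vert\cdot\Vert_{\R^n})$ is complete and that $\L^\infty_R$, viewed as a subset of $\L^1$, is complete for the $\L^1$-distance. The latter is the only slightly delicate point: given a Cauchy sequence $(u_k)$ in $\L^\infty_R$ for $\Vert\cdot\Vert_{\L^1}$, completeness of $\L^1$ provides a limit $u \in \L^1$, and one passes to a subsequence converging pointwise almost everywhere (PCLDC theorem); since each $u_k$ takes values in the closed ball $\BB_{\R^n}(0_{\R^n},R)$, so does the pointwise limit almost everywhere, hence $u \in \L^\infty_R$. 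A Cauchy sequence in a metric space that has a convergent subsequence is itself convergent, so $( \L^\infty_R \times \R^n , \d_{ \L^\infty_R \times \R^n } )$ is complete.

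For \rm{(ii)}, take $(u_k,y_k) \to (u,y)$ in $\L^\infty_R \times \R^n$, i.e. $\Vert u_k - u\Vert_{\L^1} \to 0$ and $y_k \to y$. For every $t \in [a,b]$ we have
$$ \Vert x(t,u_k,y_k) - x(t,u,y) \Vert_{\R^n} \leq \Vert y_k - y \Vert_{\R^n} + \Vert \Im[u_k - u](t) \Vert_{\R^n}. $$
Here I would bound $\Vert \Im[u_k - u](t)\Vert_{\R^n}$ uniformly in $t$. A crude estimate already suffices: using $\Vert u_k - u\Vert_{\L^\infty} \leq 2R$ together with a H\"older estimate of the type exploited via $r_\alpha > \frac1\alpha$ (so that $r'_\alpha(\alpha-1)+1 > 0$), one gets $\Vert \Im[u_k-u]\Vert_\C \leq \C$-constant$\cdot \Vert u_k - u\Vert_{\L^{r_\alpha}}^{\theta}$ for a suitable power $\theta \in (0,1]$ obtained by interpolating between the $\L^1$-convergence and the uniform $\L^\infty$-bound; alternatively, and more simply, since the $u_k$ are uniformly bounded in $\L^\infty$ and converge in $\L^1$, they converge in every $\L^p$ with $p < \infty$, in particular in $\L^{r_\alpha}$, and then \cite[Property~4 p.242]{bourdin2013} (already invoked in Remark~\ref{remcont}) gives uniform convergence of $\Im[u_k]$ to $\Im[u]$ on $[a,b]$. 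Either way $\Vert x(\cdot,u_k,y_k) - x(\cdot,u,y)\Vert_\C \to 0$, proving \rm{(ii)}.

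For \rm{(iii)}, combine \rm{(ii)} with the continuity properties of $\LL$. Writing $\LL(x) = \varphi(x(a),x(b)) + \I^{\beta}_{a+}[L(x,\CDm[x],\cdot)](b)$ and noting $\CDm[x(\cdot,u,y)] = u$, continuity of the Mayer term follows from $x(\cdot,u_k,y_k) \to x(\cdot,u,y)$ in $\C$ and continuity of $\varphi$. For the Lagrange term, $L(x(\cdot,u_k,y_k), u_k, \cdot) \to L(x(\cdot,u,y), u, \cdot)$ pointwise almost everywhere (along a subsequence, using $u_k \to u$ a.e.\ and $x(\cdot,u_k,y_k) \to x(\cdot,u,y)$ uniformly and continuity of $L$), with a uniform $\L^\infty$ bound coming from the uniform bound $\Vert u_k\Vert_{\L^\infty} \leq R$, the uniform bound on $x(\cdot,u_k,y_k)$ in $\C$, and continuity of $L$ on the resulting compact set; the LDC theorem then passes the limit through $\I^{\beta}_{a+}[\cdot](b)$. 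A subsequence argument upgrades this to convergence of the whole sequence, so $\Phi_R$ is continuous. The main obstacle is the uniform-in-$t$ control of $\Im[u_k - u]$ in step \rm{(ii)}: this is where the restriction $r_\alpha > \frac1\alpha$ is genuinely used, and it is the reason one cannot simply equip $\L^\infty_R$ with the $\L^\infty$-norm and why the $\L^1$-distance (for which one has completeness and for which the Ekeland principle will later be applied) is the right choice.
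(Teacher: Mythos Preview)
Your proposal is correct and follows essentially the same approach as the paper: PCLDC for completeness in (i), a H\"older estimate with exponents $r_\alpha$, $r'_\alpha$ (interpolating the $\L^1$-convergence against the uniform $\L^\infty$-bound $2R$) for the uniform control in (ii), and a contradiction/subsequence argument combining LDC and PCLDC for (iii). The paper in fact records the explicit inequality
\[
\Vert x(t,u_2,y_2) - x(t,u_1,y_1) \Vert_{\R^n} \leq \Vert y_2 - y_1 \Vert_{\R^n} + \dfrac{1}{\Gamma(\alpha)} \left( \dfrac{ R (b-a)^{r'_\alpha (\alpha-1) +1}}{r'_\alpha (\alpha-1) +1 } \right)^{1/r'_\alpha} \Vert u_2 - u_1 \Vert^{1/r_\alpha}_{\L^1},
\]
which is precisely the quantitative form of your first option in step (ii).
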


\begin{proof}
The first item can be easily derived from the PCLDC theorem. Secondly, it can be proved from the classical H\"older inequality that
$$ \Vert x(t,u_2,y_2) -  x(t,u_1,y_1) \Vert_{\R^n} \leq \Vert y_2 - y_1 \Vert_{\R^n} +\dfrac{1}{\Gamma(\alpha)} \left( \dfrac{ R (b-a)^{r'_\alpha (\alpha-1) +1}}{r'_\alpha (\alpha-1) +1 } \right)^{1/r'_\alpha} \Vert u_2 - u_1 \Vert^{1/r_\alpha}_{\L^1} ,$$
for all $t \in [a,b]$ and all $(u_1,y_1)$, $(u_2,y_2) \in \L^\infty_R \times \R^n$, which concludes the proof of the second item. The third item can be derived by contradiction and by using the LDC and PCLDC theorems.
\end{proof}

The rest of this section is devoted to the sensitivity analysis of the Bolza functional $\Phi_R$ under perturbations of the couple $(u,y)$ (see Propositions~\ref{propneedle} and~\ref{propcont}). Before coming to these points, we first introduce the following notion of needle-perturbation of~$u$.

\begin{definition}[Needle-perturbation of $u$]
Let $R \geq 0$ and let $(u,y) \in \L^\infty_R \times \R^n$. A needle-perturbation of $u$ associated to $(\t,v) \in \PPP(u,y) \times \BB_{\R^n}(0_{\R^n},R)$ and $0 < h \leq b-\t$ is the function~$u^{(\t,v)}(\cdot,h) \in  \L^\infty_R$ defined by
$$ u^{(\t,v)}(t,h) := \left\lbrace \begin{array}{lcl}
v & \text{if} & t \in [\t,\t+h) , \\
u(t) & \text{if} & t \notin [\t,\t+h) , 
\end{array}
\right. $$
for almost every $t \in [a,b]$.
\end{definition}

\begin{lemma}\label{lemneedletraj}
Let $R \geq 0$ and let $(u,y) \in \L^\infty_R \times \R^n$. It holds that
\begin{equation}\label{ineq891}
\left\Vert x \Big( t ,u^{(\t,v)}(\cdot,h),y \Big)- x(t,u,y) \right\Vert_{\R^n} \leq \dfrac{2R}{\Gamma(\a+1)} h^\alpha,
\end{equation}
for all $t \in [a,b]$ and
\begin{multline}\label{ineq890}
\left\Vert \dfrac{ x \Big( t , u^{(\t,v)}(\cdot,h),y \Big)- x(t,u,y)}{h} - \dfrac{(t-\t)^{\alpha-1}}{\Gamma(\alpha)}  (v-u(\t)) \right\Vert_{\R^n} \\
\leq \dfrac{( t - \t )^{\a-1}}{\Gamma(\a)}  \left\Vert \dfrac{1}{h} \int_\t^{\t+h} u(s) \; ds - u(\t) \right\Vert_{\R^n} 
+\dfrac{2R}{\Gamma(\a)} \Bigg( \Big( t - (\t+h) \Big)^{\a-1} - (t-\t)^{\a-1} \Bigg) ,
\end{multline}
for all $t \in (\t+h,b]$, all $(\t,v) \in \PPP(u,y) \times \BB_{\R^n}(0_{\R^n},R)$ and all $0 < h \leq b-\t$.
\end{lemma}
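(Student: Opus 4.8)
The plan is to compute the difference $x(t,u^{(\t,v)}(\cdot,h),y) - x(t,u,y)$ explicitly using the definition $x(t,u,y) = y + \Im[u](t)$ and the fact that $u^{(\t,v)}(\cdot,h)$ differs from $u$ only on $[\t,\t+h)$. Since the constant $y$ and the values of $u$ outside $[\t,\t+h)$ cancel, one gets
$$
x\Big(t,u^{(\t,v)}(\cdot,h),y\Big) - x(t,u,y) = \frac{1}{\Gamma(\a)} \int_{[\t,\t+h)\cap[a,t]} (t-s)^{\a-1}\big(v - u(s)\big)\, ds .
$$
For the first inequality~\eqref{ineq891}, I would bound $\Vert v - u(s)\Vert_{\R^n} \leq 2R$ (both $v$ and $u(s)$ lie in $\BB_{\R^n}(0_{\R^n},R)$), pull this constant out, and estimate $\int (t-s)^{\a-1}\, ds$ over an interval of length at most $h$. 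The worst case for $\int_{\t}^{\min(t,\t+h)} (t-s)^{\a-1}\, ds$ (with $\a \leq 1$, so $(t-s)^{\a-1}$ is largest near $s=t$) is when $t \in [\t,\t+h]$, giving $\int_\t^t (t-s)^{\a-1}\, ds = (t-\t)^\a/\a \leq h^\a/\a$; for $t > \t+h$ the integral is $\big((t-\t)^\a - (t-\t-h)^\a\big)/\a \leq h^\a/\a$ by subadditivity of $s\mapsto s^\a$. Either way the bound is $h^\a/\a = h^\a \Gamma(\a)/\Gamma(\a+1)$, and combined with the $2R$ factor and the $1/\Gamma(\a)$ prefactor this yields exactly $\tfrac{2R}{\Gamma(\a+1)} h^\a$.

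For the second inequality~\eqref{ineq890}, which concerns $t > \t+h$ (so the full interval $[\t,\t+h)$ lies inside $[a,t]$), I would split the difference quotient as
$$
\frac{x\big(t,u^{(\t,v)}(\cdot,h),y\big) - x(t,u,y)}{h} = \frac{1}{\Gamma(\a)}\,\frac{1}{h}\int_\t^{\t+h}(t-s)^{\a-1}\big(v-u(s)\big)\, ds ,
$$
and compare it term by term with the target $\tfrac{(t-\t)^{\a-1}}{\Gamma(\a)}(v - u(\t))$. The natural decomposition is: (a) replace $(t-s)^{\a-1}$ by $(t-\t)^{\a-1}$ inside the integral, controlling the error by $\sup_{s\in[\t,\t+h]}\big|(t-s)^{\a-1}-(t-\t)^{\a-1}\big|\leq (t-\t-h)^{\a-1}-(t-\t)^{\a-1}$ (monotonicity of $s\mapsto(t-s)^{\a-1}$, again using $\a-1\leq 0$) times $\tfrac{1}{h}\int_\t^{\t+h}\Vert v - u(s)\Vert\, ds \leq 2R$, which accounts for the second summand on the right-hand side; and (b) with the kernel now frozen at $(t-\t)^{\a-1}$, compare $\tfrac{1}{h}\int_\t^{\t+h}(v-u(s))\,ds$ with $v-u(\t)$, whose difference is $\tfrac{1}{h}\int_\t^{\t+h}\big(u(\t)-u(s)\big)\,ds$ — exactly the first summand on the right-hand side (since $v$ is constant and cancels). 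Assembling (a) and (b) with the triangle inequality gives~\eqref{ineq890}.

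The routine parts are the $\Gamma$-function bookkeeping and the elementary inequalities $0\leq s_2^\a - s_1^\a \leq (s_2-s_1)^\a$ and the monotonicity of $s\mapsto s^{\a-1}$ for $0<\a\leq 1$. The only point requiring a little care — and the main (minor) obstacle — is the case analysis in~\eqref{ineq891} according to whether $t$ falls inside or beyond $[\t,\t+h]$, i.e. making sure the crude bound $h^\a/\a$ genuinely dominates $\int (t-s)^{\a-1}\,ds$ uniformly in $t\in[a,b]$; this is settled by the subadditivity estimate noted above. Note that the Lebesgue-point hypothesis $\t\in\PPP(u,y)$ is not actually needed for this lemma (it only guarantees $\tfrac1h\int_\t^{\t+h}u\to u(\t)$, which matters when one later lets $h\to0^+$), so the two inequalities hold as stated for every $\t\in(a,b)$ and every admissible $h$.
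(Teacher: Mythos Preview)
Your proposal is correct and follows essentially the same approach as the paper's own proof: the same explicit computation of the difference as a fractional integral over $[\t,\t+h)\cap[a,t]$, the same case split for~\eqref{ineq891} with the subadditivity bound $(t-\t)^\a-(t-(\t+h))^\a\le h^\a$, and the identical two-term decomposition for~\eqref{ineq890} (freeze the kernel, then compare averages). Your closing observation that the Lebesgue-point hypothesis is not actually used here is also correct.
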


\begin{proof}
Let~$(\t,v) \in \PPP(u,y) \times \BB_{\R^n}(0_{\R^n},R)$ and $0 < h \leq b-\t$ being fixed for the whole proof. It holds that
\begin{multline*}
 x \Big( t ,u^{(\t,v)}(\cdot,h),y \Big)- x(t,u,y)  =  \I^\alpha_{a+} [ u^{(\t,v)}(\cdot,h) -u ](t) \\
 = \int_a^t \dfrac{(t-s)^{\alpha - 1}}{\Gamma (\alpha)} \Big( u^{(\t,v)}(s,h) -u(s) \Big) \; ds 
 = \left\lbrace \begin{array}{lcl}
 0_{\R^n} & \text{if} & a \leq t < \tau , \\[10pt]
\di \int_{\tau}^t \dfrac{(t-s)^{\alpha - 1}}{\Gamma (\alpha)} ( v -u(s) ) \; ds & \text{if} &  \tau \leq t \leq \tau + h , \\[15pt]
\di \int_{\tau}^{\tau + h} \dfrac{(t-s)^{\alpha - 1}}{\Gamma (\alpha)} ( v -u(s) ) \; ds & \text{if} &   t > \tau + h ,
 \end{array}
  \right.
\end{multline*}
for all~$t \in [a,b]$. Since~$u \in \L^\infty_R$ and~$v \in \BB_{\R^n}(0_{\R^n},R)$, we obtain that
\begin{multline*}
\left\Vert x \Big( t ,u^{(\t,v)}(\cdot,h),y \Big)- x(t,u,y) \right\Vert_{\R^n} 
 \leq  \left\lbrace \begin{array}{lcl}
 0 & \text{if} & a \leq t < \tau , \\[10pt]
2 R \di \int_{\tau}^t \dfrac{(t-s)^{\alpha - 1}}{\Gamma (\alpha)} \; ds & \text{if} &  \tau \leq t \leq \tau + h , \\[15pt]
2R \di \int_{\tau}^{\tau + h} \dfrac{(t-s)^{\alpha - 1}}{\Gamma (\alpha)}  \; ds & \text{if} &   t > \tau + h ,
 \end{array}
  \right.
  \\[10pt]
  =  \left\lbrace \begin{array}{lcl}
 0 & \text{if} & a \leq t < \tau , \\[10pt]
2 R \dfrac{(t-\tau)^\alpha}{\Gamma(\alpha+1)} & \text{if} &  \tau \leq t \leq \tau + h , \\[15pt]
2R \dfrac{(t-\tau)^\alpha - (t-(\tau+h))^\alpha}{\Gamma(\alpha+1)} & \text{if} &   t > \tau + h ,
 \end{array}
  \right.
\end{multline*}
for all~$t \in [a,b]$. To prove Inequality~\eqref{ineq891}, one has just to see that, in all of the three above cases, the right-hand side term is less than~$\frac{2R}{\Gamma(\alpha+1)}h^\alpha$. For the last case, one has just to invoke the basic inequality~$\chi_2^\alpha - \chi_1^\alpha \leq (\chi_2 - \chi_1 )^\alpha$ which is satisfied for all~$0 \leq \chi_1 \leq \chi_2$ (see the proof of Lemma~\ref{lem30} for some details). Now let us prove Inequality~\eqref{ineq890}. Using similar arguments, one has
\begin{multline*}
\dfrac{ x \Big( t , u^{(\t,v)}(\cdot,h),y \Big)- x(t,u,y)}{h} - \dfrac{(t-\t)^{\alpha-1}}{\Gamma(\alpha)}  (v-u(\t)) \\
= \dfrac{1}{h} \int_\t^{\t+h} \dfrac{(t-s)^{\alpha-1}}{\Gamma(\alpha)} (v-u(s)) \; ds - \dfrac{(t-\t)^{\alpha-1}}{\Gamma (\alpha)} (v-u(\t)) \\[5pt]
=  \dfrac{1}{h} \int_\t^{\t+h} \dfrac{(t-s)^{\alpha-1} -(t-\t)^{\alpha-1} }{\Gamma(\alpha)} (v-u(s)) \; ds +  \dfrac{(t-\t)^{\alpha-1}}{\Gamma(\alpha)} \left( u(\t) - \dfrac{1}{h} \int_\t^{\t+h} u(s) \; ds \right) ,
\end{multline*}
for all~$t \in (\t+h,b]$. Since~$u \in \L^\infty_R$ and~$v \in \BB_{\R^n}(0_{\R^n},R)$, we obtain that
\begin{multline*}
\left\Vert \dfrac{ x \Big( t , u^{(\t,v)}(\cdot,h),y \Big)- x(t,u,y)}{h} - \dfrac{(t-\t)^{\alpha-1}}{\Gamma(\alpha)}  (v-u(\t)) \right\Vert_{\R^n} \\
\leq \dfrac{2R}{\Gamma(\a)} \dfrac{1}{h} \int_\t^{\t+h} (t-s)^{\alpha-1} -(t-\t)^{\alpha-1} \; ds + \dfrac{( t - \t )^{\a-1}}{\Gamma(\a)}  \left\Vert \dfrac{1}{h} \int_\t^{\t+h} u(s) \; ds - u(\t) \right\Vert_{\R^n}  ,
\end{multline*}
for all~$t \in (\t+h,b]$. Noting that~$(t-s)^{\alpha-1} \leq (t-(\t+h))^{\alpha-1}$ for all~$s \in [\t,\t+h]$ and all~$t \in (\t+h,b]$, the proof of Inequality~\eqref{ineq890} is complete.
\end{proof}

\begin{proposition}[Sensitivity analysis under needle-perturbation of $u$]\label{propneedle}
Let $R \geq 0$ and let $(u,y) \in \L^\infty_R \times \R^n$. It holds that
\begin{multline*}
 \lim\limits_{h \to 0^+} \dfrac{\Phi_R \Big( u^{(\t,v)}(\cdot,h),y \Big)-\Phi_R(u,y)}{h} =  \dfrac{(b-\t)^{\beta-1}}{\Gamma(\beta)}  \Big( L(x(\t),v,\t)-L(x(\t),u(\t),\t) \Big) \\
  + \left( \dfrac{ (b-\t)^{\alpha-1}}{\Gamma(\alpha)} \partial_2 \varphi (x(a),x(b))  + \I^\alpha_{b-} \left[ \dfrac{(b-\cdot)^{\beta-1}}{\Gamma(\beta)}  \partial_1 L (x,u,\cdot) \right](\t) \right) \cdot (v-u(\t)) ,
\end{multline*}
for all $(\t,v) \in \PPP(u,y) \times \BB_{\R^n}(0_{\R^n},R)$, where $x = x(\cdot,u,y) \in {}_\cc \AC^{\a,\infty}_{a+}$. 
\end{proposition}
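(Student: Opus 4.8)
The plan is to decompose the difference quotient into its Mayer part and its Lagrange part, treat each separately, and pass to the limit $h \to 0^+$ using the trajectory estimates from Lemma~\ref{lemneedletraj} together with the Lebesgue point hypothesis on $\tau$. Write $x = x(\cdot,u,y)$ and $x_h := x(\cdot,u^{(\tau,v)}(\cdot,h),y)$ for brevity. By definition $\Phi_R(u^{(\tau,v)}(\cdot,h),y) - \Phi_R(u,y) = \big(\varphi(x_h(a),x_h(b)) - \varphi(x(a),x(b))\big) + \big(\I^\beta_{a+}[L(x_h,u^{(\tau,v)}(\cdot,h),\cdot)](b) - \I^\beta_{a+}[L(x,u,\cdot)](b)\big)$. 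Since the needle perturbation does not change the value at $t=a$, we have $x_h(a) = x(a) = y$, so the Mayer increment reduces to $\varphi(y,x_h(b)) - \varphi(y,x(b))$; a first-order Taylor expansion of $\varphi$ in its second argument, combined with the fact that by Lemma~\ref{lemneedletraj} (Inequality~\eqref{ineq890} at $t=b$) the quantity $\frac{x_h(b)-x(b)}{h}$ converges to $\frac{(b-\tau)^{\alpha-1}}{\Gamma(\alpha)}(v-u(\tau))$, yields that $\frac{1}{h}\big(\varphi(y,x_h(b)) - \varphi(y,x(b))\big) \to \partial_2\varphi(x(a),x(b)) \cdot \frac{(b-\tau)^{\alpha-1}}{\Gamma(\alpha)}(v-u(\tau))$, which is exactly the first bracketed term in the claimed limit.

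For the Lagrange part I would first rewrite the RL integral at $b$ using item~(i) of Lemma~\ref{lem2} with $\gamma=1$, so that $\I^\beta_{a+}[L(\cdot)](b) = \int_a^b \frac{(b-s)^{\beta-1}}{\Gamma(\beta)} L(\cdot)(s)\, ds$, turning everything into ordinary Lebesgue integrals. Then split the increment of $\int_a^b \frac{(b-s)^{\beta-1}}{\Gamma(\beta)}\big(L(x_h(s),u^{(\tau,v)}(s,h),s) - L(x(s),u(s),s)\big)\, ds$ according to the three regions $[a,\tau)$, $[\tau,\tau+h)$ and $[\tau+h,b]$. On $[a,\tau)$ the integrand vanishes identically because $x_h = x$ and $u^{(\tau,v)}(\cdot,h) = u$ there. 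On $[\tau,\tau+h)$ the control has been switched to the constant $v$; here I would write $L(x_h(s),v,s) - L(x(s),u(s),s) = \big(L(x_h(s),v,s) - L(x(s),v,s)\big) + \big(L(x(s),v,s) - L(x(s),u(s),s)\big)$, bound the first difference by a constant times $\|x_h(s)-x(s)\|$ (using $C^1$-regularity of $L$ on the relevant compact set and Inequality~\eqref{ineq891}, which gives $O(h^\alpha)$), and recognize that $\frac1h\int_\tau^{\tau+h} \frac{(b-s)^{\beta-1}}{\Gamma(\beta)}\big(L(x(s),v,s) - L(x(s),u(s),s)\big)\, ds \to \frac{(b-\tau)^{\beta-1}}{\Gamma(\beta)}\big(L(x(\tau),v,\tau) - L(x(\tau),u(\tau),\tau)\big)$ precisely because $\tau$ is a Lebesgue point of $s \mapsto L(x(s),u(s),s)$ and the map $s\mapsto L(x(s),v,s)$ is continuous at $\tau$; this recovers the remaining piece of the first bracketed term.

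On $[\tau+h,b]$ the control is unchanged ($u^{(\tau,v)}(s,h) = u(s)$), so the integrand is $\frac{(b-s)^{\beta-1}}{\Gamma(\beta)}\big(L(x_h(s),u(s),s) - L(x(s),u(s),s)\big)$; a first-order Taylor expansion of $L$ in its first slot gives $\partial_1 L(x(s),u(s),s)\cdot(x_h(s)-x(s))$ plus a remainder that is $o(\|x_h(s)-x(s)\|) = o(h^\alpha)$ uniformly on this set. Dividing by $h$ and using Inequality~\eqref{ineq890}, $\frac{x_h(s)-x(s)}{h}$ converges pointwise to $\frac{(s-\tau)^{\alpha-1}}{\Gamma(\alpha)}(v-u(\tau))$ on $(\tau,b]$, and the right-hand side of~\eqref{ineq890} furnishes an integrable dominating bound (the key point being that $\int_{\tau+h}^b (b-s)^{\beta-1}\big((s-(\tau+h))^{\alpha-1} - (s-\tau)^{\alpha-1}\big)\, ds$ stays controlled — here I would use the same computation as in the fifth estimate of the proof of Lemma~\ref{lem3}); hence by the LDC theorem $\frac1h\int_{\tau+h}^b \frac{(b-s)^{\beta-1}}{\Gamma(\beta)}\partial_1 L(x(s),u(s),s)\cdot(x_h(s)-x(s))\, ds \to \int_\tau^b \frac{(b-s)^{\beta-1}}{\Gamma(\beta)} \partial_1 L(x(s),u(s),s) \cdot \frac{(s-\tau)^{\alpha-1}}{\Gamma(\alpha)}(v-u(\tau))\, ds$, which is exactly $\I^\alpha_{b-}\big[\frac{(b-\cdot)^{\beta-1}}{\Gamma(\beta)}\partial_1 L(x,u,\cdot)\big](\tau)\cdot(v-u(\tau))$ by Definition~\ref{defRLright}. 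Collecting the Mayer contribution and the three Lagrange contributions gives the stated formula. The main obstacle is the $[\tau+h,b]$ term: one must simultaneously control the singularity of $(t-\tau)^{\alpha-1}$ as $t \downarrow \tau$ and justify the interchange of limit and integral, and it is precisely here that the restriction $r_\alpha > 1/\alpha$ (equivalently $r'_\alpha(\alpha-1)+1 > 0$) and the Lebesgue-point machinery are doing real work; the other three regions are comparatively routine.
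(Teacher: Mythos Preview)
Your decomposition mirrors the paper's proof almost exactly: split into Mayer and Lagrange parts, break the Lagrange integral over the three regions, and Taylor-expand on $[\tau+h,b]$. The place where your write-up has a genuine gap is the treatment of the $[\tau+h,b]$ contribution. First, for the Taylor remainder: writing it as ``$o(\|x_h(s)-x(s)\|)=o(h^\alpha)$ uniformly'' and then dividing by $h$ does not by itself yield a vanishing term when $0<\alpha<1$, since $o(h^\alpha)/h$ integrated over a set of length $O(1)$ need not tend to zero. One must keep the pointwise factor $\|x_h(s)-x(s)\|$ in the remainder bound and use that $\tfrac1h\int_{\tau+h}^b\tfrac{(b-s)^{\beta-1}}{\Gamma(\beta)}\|x_h(s)-x(s)\|\,ds$ stays bounded (via $\tfrac{\|x_h(s)-x(s)\|}{h}\le\tfrac{2R}{\Gamma(\alpha)}(s-(\tau+h))^{\alpha-1}$ and the Beta integral). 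Second, for the leading term: the LDC theorem does not apply as you invoke it, because any pointwise majorant for $\tfrac{x_h(s)-x(s)}{h}$ coming from~\eqref{ineq890} involves $(s-(\tau+h))^{\alpha-1}$, whose singularity slides with $h$; there is no fixed $\L^1$ dominator on $(\tau,b]$.

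The paper handles both issues at once by using Taylor with integral remainder, writing $L(x_h,u,\cdot)-L(x,u,\cdot)=\int_0^1\partial_1L(\star_\theta)\,d\theta\cdot(x_h-x)$ and then adding and subtracting $\partial_1L(\star_0)\cdot\tfrac{(s-\tau)^{\alpha-1}}{\Gamma(\alpha)}(v-u(\tau))$. This yields three pieces: the third is the desired limit; the second is dominated by the fixed function $C(b-s)^{\beta-1}(s-\tau)^{\alpha-1}$ and handled by LDC; the first is bounded \emph{directly} by integrating the right-hand side of~\eqref{ineq890} against $(b-s)^{\beta-1}$, producing explicit Beta-type expressions that tend to zero. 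Your parenthetical about the fifth estimate of Lemma~\ref{lem3} is the right instinct, but it belongs to a direct-estimation argument rather than an LDC one. Finally, your closing remark is misplaced: the exponent $r_\alpha>1/\alpha$ enters only in the continuity statement of Lemma~\ref{lemcomplete} and plays no role in the proof of Proposition~\ref{propneedle}.
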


\begin{proof}
Let $(\t,v) \in \PPP(u,y) \times \BB_{\R^n}(0_{\R^n},R)$. For simplicity of notations in this proof, we denote by
$$ u_h := u^{(\t,v)}(\cdot,h) \quad \text{and} \quad x_h := x \Big( \cdot , u^{(\t,v)}(\cdot,h),y \Big),  $$
for all $0 < h \leq b-\t$. From Inequality~\eqref{ineq890} and since $\t$ is a Lebesgue point of $u$, it is clear that
$$ \lim\limits_{h \to 0^+} \dfrac{\varphi (x_h(a),x_h(b))-\varphi(x(a),x(b))}{h} = \dfrac{ (b-\t)^{\alpha-1}}{\Gamma(\alpha)} \partial_2 \varphi (x(a),x(b)) \cdot (v-u(\t)).  $$
Moreover the term
$$ \dfrac{\I^{\beta}_{a+} [ L(x_h,u_h,\cdot)](b) - \I^{\beta}_{a+} [ L(x,u,\cdot) ](b) }{h}, $$
can be decomposed as
\begin{multline*}
\dfrac{1}{h} \int_\t^{\t+h} \dfrac{(b-s)^{\beta-1}}{\Gamma(\beta)} \Big( L(x_h(s),v,s) - L(x(s),v,s) \Big) \; ds \\
+ \dfrac{1}{h} \int_\t^{\t+h} \dfrac{(b-s)^{\beta-1}}{\Gamma(\beta)} \Big( L(x(s),v,s) - L(x(s),u(s),s) \Big) \; ds \\
+ \dfrac{1}{h} \int_{\t+h}^b \dfrac{(b-s)^{\beta-1}}{\Gamma(\beta)} \Big( L(x_h(s),u(s),s) - L(x(s),u(s),s) \Big) \; ds,
\end{multline*}
for all $0 < h \leq b-\t$. From the uniform convergence obtained in Inequality~\eqref{ineq891}, the first term tends to zero when $h \to 0^+$. Since $\t$ is a Lebesgue point of the function $L(x,u,\cdot)$, it is clear that the second term tends to
$$ \dfrac{(b-\t)^{\beta-1}}{\Gamma(\beta)}  \Big( L(x(\t),v,\t)-L(x(\t),u(\t),\t) \Big), $$
when $h \to 0^+$. From a Taylor expansion with integral rest, the last term can be decomposed as
\begin{multline*}
 \int_{\t+h}^b \int_0^1 \dfrac{(b-s)^{\beta-1}}{\Gamma(\beta)}  \partial_1 L (\star_\theta) \cdot \left( \dfrac{x_h(s)-x(s)}{h} - \dfrac{(s-\t)^{\alpha-1}}{\Gamma(\alpha)} (v-u(\t)) \right) \; d\theta ds \\
+  \int_{\t+h}^b \int_0^1 \dfrac{(b-s)^{\beta-1}}{\Gamma(\beta)} \dfrac{(s-\t)^{\alpha-1}}{\Gamma(\alpha)} ( \partial_1 L (\star_\theta) - \partial_1 L (\star_0) ) \cdot (v-u(\t)) \; d\theta ds \\
 +  \int_{\t+h}^b  \dfrac{(b-s)^{\beta-1}}{\Gamma(\beta)} \dfrac{(s-\t)^{\alpha-1}}{\Gamma(\alpha)} \partial_1 L (\star_0)  \cdot (v-u(\t)) \; ds
\end{multline*}
where
$$ \star_\theta := ( x(s)+\theta (x_h(s)-x(s)) , u(s) , s), $$
for all $\theta \in [0,1]$ and all $0 < h \leq b-\t$. The last above term clearly tends to
$$ \I^\alpha_{b-} \left[ \dfrac{(b-\cdot)^{\beta-1}}{\Gamma(\beta)}  \partial_1 L (x,u,\cdot) \right](\t) \cdot (v-u(\t)), $$
when $h \to 0^+$. From the LDC theorem, the second above term clearly tends to zero when $h \to 0^+$. Finally one can prove from Inequality~\eqref{ineq890} that the norm of the first above term can be bounded by
\begin{multline*}
\dfrac{\Big( b-(\t+h) \Big)^{\a+\beta-1}}{\Gamma (\a+\beta)}  \left\Vert \dfrac{1}{h} \int_\t^{\t+h} u(s) \; ds - u(\t) \right\Vert_{\R^n} \\
+ \dfrac{2R}{\Gamma (\a+\beta)} \left( \Big( b-(\t+h) \Big)^{\a+\beta-1} - ( b-\t)^{\a+\beta-1} \right) 
+ 2R \int_\t^{\t+h} \dfrac{(b-s)^{\beta-1}}{\Gamma(\beta)} \dfrac{(s-\t)^{\alpha-1}}{\Gamma(\alpha)} \; ds, 
\end{multline*}
which tends to zero when $h \to 0^+$ (in particular since $\t$ is a Lebesgue point of the function $u$). The proof is complete.
\end{proof}

\begin{proposition}[Sensitivity analysis under perturbation of $y$]\label{propcont}
Let $R \geq 0$ and let $(u,y) \in \L^\infty_R \times \R^n$. It holds that
$$
\lim\limits_{h \to 0^+} \dfrac{\Phi_R(u,y+hy')-\Phi_R(u,y)}{h} 
= \left( \partial_1 \varphi (x(a),x(b)) + \partial_2 \varphi (x(a),x(b)) + \I^\beta_{a+} \Big[ \partial_1 L (x, u, \cdot) \Big](b) \right) \cdot y',
$$
for all $y' \in \R^n$, where $x = x(\cdot,u,y) \in {}_\cc \AC^{\a,\infty}_{a+}$.
\end{proposition}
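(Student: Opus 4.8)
The plan is to exploit that a perturbation of the initial value induces merely a constant shift of the trajectory. First I would observe that, for every $y' \in \R^n$ and every $h \neq 0$, one has $x(\cdot,u,y+hy') = x(\cdot,u,y) + h y'$, since $x(t,u,y+hy') = (y+hy') + \Im[u](t)$ for all $t \in [a,b]$. Writing $x := x(\cdot,u,y) \in {}_\cc \AC^{\a,\infty}_{a+}$ and $x_h := x + h y'$, I would also note that $\CDm[x_h] = \CDm[x] = u$: indeed $x_h - x_h(a) = (x + h y') - (x(a) + h y') = x - x(a)$, so the left Caputo fractional derivative is unchanged (alternatively, $x_h = (y + h y') + \Im[u]$ with $u \in \L^\infty$, so Remark~\ref{rempropcompC2} applies). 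Consequently $\Phi_R(u,y+h y') - \Phi_R(u,y) = \LL(x_h) - \LL(x)$ splits into a Mayer contribution and a Lagrange contribution, each one only involving the shift in the state variable.

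For the Mayer contribution, I would write $\varphi(x_h(a),x_h(b)) - \varphi(x(a),x(b)) = \varphi(x(a) + h y', x(b) + h y') - \varphi(x(a),x(b))$; dividing by $h$ and letting $h \to 0^+$, the definition of the differential of $\varphi$ (which is of class $\C^1$) along the direction $(y',y')$ gives the limit $\partial_1\varphi(x(a),x(b)) \cdot y' + \partial_2\varphi(x(a),x(b)) \cdot y'$.

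For the Lagrange contribution, I would use the identity $\I^{\beta}_{a+}[L(x_h,u,\cdot)](b) - \I^{\beta}_{a+}[L(x,u,\cdot)](b) = \int_a^b \frac{(b-s)^{\beta-1}}{\Gamma(\beta)}\big( L(x(s) + h y', u(s), s) - L(x(s), u(s), s) \big)\, ds$, and then apply a first-order Taylor expansion with integral rest to the integrand, so that after division by $h$ it becomes $\int_a^b \int_0^1 \frac{(b-s)^{\beta-1}}{\Gamma(\beta)}\, \partial_1 L(x(s) + \theta h y', u(s), s) \cdot y'\, d\theta\, ds$. Passing to the limit $h \to 0^+$ under the double integral by the LDC theorem then yields $\I^{\beta}_{a+}[\partial_1 L(x,u,\cdot)](b) \cdot y'$. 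Adding the two contributions produces the announced formula.

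The only delicate point, and the closest thing to an obstacle, is justifying the dominated convergence in the Lagrange contribution. For $|h| \leq 1$, say, the points $x(s) + \theta h y'$ remain in a fixed compact subset of $\R^n$ (since $x \in \C$ is bounded on $[a,b]$) and $u(s)$ remains in $\BB_{\R^n}(0_{\R^n},R)$, so $\partial_1 L$ is bounded on the relevant compact set, while $s \mapsto \frac{(b-s)^{\beta-1}}{\Gamma(\beta)}$ belongs to $\L^1([a,b])$. This furnishes an integrable dominating function independent of $h$ and $\theta$, and the pointwise almost everywhere convergence of the integrand as $h \to 0^+$ follows from the continuity of $\partial_1 L$.
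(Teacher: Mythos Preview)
Your proof is correct and follows essentially the same approach as the paper. The paper's proof is a one-line reference to Proposition~\ref{propfirstdifferential} applied with the constant variation $\eta = y' \in \CACmi$ (for which $\eta(a)=\eta(b)=y'$ and $\CDm[\eta]=0$); your argument simply unpacks that citation by carrying out the Taylor expansion and LDC theorem directly in this special case.
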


\begin{proof}
We apply Proposition~\ref{propfirstdifferential} with the constant variation $\eta = y' \in \CACmi$.
\end{proof}

\subsection{Proof of Theorem~\ref{thm2} by applying the Ekeland variational principle}\label{secproofproof}
Let $x \in \K$ be a solution to Problem~\eqref{minproblem}. Using the notations introduced in Section~\ref{secsens}, it holds that $x = x(\cdot,u,y)$ where $u = \CDm[x]$ and $y=x(a)$. In particular note that $g(y,x(b,u,y))=g(x(a),x(b)) \in \S$. Let $R := \Vert u \Vert_{\L^\infty} + 1 \geq 0$ and let us consider a positive sequence $(\eps_k)_{k \in \N}$ which tends to zero when $k \to \infty$. We introduce the penalized functional
$$ \fonction{\J_k}{\L^\infty_R \times \R^n}{\R_+}{(u',y')}{ \sqrt{ \Bigg( \Big( \Phi_R (u',y') - \Phi_R (u,y) + \eps_k \Big)^+ \Bigg)^2 + \d^2_\S \Big( g(y',x(b,u',y')) \Big) } ,} $$
for all $k \in \N$. From Lemma~\ref{lemcomplete} and the continuities of $g$ and $\d^2_\S$, it is clear that $J_k$ is a continuous nonnegative map defined on a complete metric space for all $k \in \N$ . Since $\J_k (u,y) = \eps_k$ for all $k \in \N$, we deduce from the Ekeland variational principle (see Proposition~\ref{propekeland}) that there exists a sequence $(u_k,y_k)_{k \in \N} \subset \L^\infty_R \times \R^n$ such that 
$$ \d_{\L^\infty_R \times \R^n} \Big( (u_k,y_k),(u,y) \Big) \leq \sqrt{\eps_k}, $$
and
\begin{equation}\label{ineq}
- \sqrt{\eps_k} \; \d_{\L^\infty_R \times \R^n} \Big( (u',y') , (u_k,y_k) \Big) \leq \J_k (u',y') - \J_k(u,y),
\end{equation}
for all $(u',y') \in \L^\infty_R \times \R^n$ and all $k \in \N$. In the sequel, we denote by $x_k := x(\cdot,u_k,y_k)$ for all $k \in \N$. Note that the sequence $(u_k,y_k)_{k \in \N}$ converges to $(u,y)$ in $\L^\infty_R \times \R^n$, and thus the sequence $(x_k)_{k \in \N}$ converges to $x$ in $\C$ (see Lemma~\ref{lemcomplete}).

\medskip

From the optimality of $x$, one can easily see that $\J_k (u',y') > 0$ for all $(u',y') \in \L^\infty_R \times \R^n$ and all~$k \in \N$. As a consequence, we can correctly define
$$ \psi^0_k := - \dfrac{1}{\J_k (u_k,y_k)} \Big( \Phi_R (u_k,y_k) - \Phi_R (u,y) + \eps_k \Big)^+ \leq 0, $$
and
$$ \psi_k := - \dfrac{1}{\J_k (u_k,y_k)} \Bigg( g(x_k(a),x_k(b)) - \PP_\S \Big( g(x_k(a),x_k(b)) \Big) \Bigg) \in \R^j, $$
which satisfy $\vert \psi^0_k \vert^2 + \Vert \psi_k \Vert_{\R^j}^2 = 1$ for all $k \in \N$. From a standard compactness argument and from the PCLDC theorem, we can extract subsequences (that we do not relabel) such that $(\psi^0_k)_{k \in \N}$ converges to some $\psi^0 \leq 0$, $(\psi_k)_{k \in \N}$ converges to some $\psi \in \R^j $ satisfying $-\psi \in \mathrm{N}_\S [ g(x(a),x(b)) ]$ (see Lemma~\ref{lemmeprojection}) and $(u_k)_{k \in \N}$ converges to $u$ pointwisely almost everywhere on~$[a,b]$. Moreover, note that $\vert \psi^0 \vert^2 + \Vert \psi \Vert_{\R^j}^2 = 1$ and thus the couple $(\psi^0,\psi)$ is not trivial.

\paragraph{Perturbation of $y_k$.} Let $y' \in \R^n$ and let us fix some $k \in \N$. From Inequality~\eqref{ineq}, it holds that
\begin{equation*}
- \sqrt{\eps_k} \; \Vert y' \Vert_{\R^n} \leq \dfrac{1}{\J_k (u_k,y_k+hy') + \J_k (u_k,y_k)} \times \dfrac{\J^2_k ( u_k,y_k+hy' ) - \J^2_k (u_k,y_k)}{h}, 
\end{equation*}
for all $h > 0$. Letting $h \to 0^+$, we exactly get from Proposition~\ref{propcont} that
\begin{multline*}
 \Bigg(  \psi^0_k \left( \partial_1 \varphi (x_k(a),x_k(b)) + \partial_2 \varphi (x_k(a),x_k(b)) + \I^\beta_{a+} \Big[ \partial_1 L (x_k, u_k, \cdot) \Big](b) \right) \\
  + \Big( \partial_1 g ( x_k(a) , x_k(b) )^\top  + \partial_2 g ( x_k(a) , x_k(b) )^\top \Big) \times \psi_k \Bigg) \cdot y' \leq \sqrt{\eps_k} \Vert y' \Vert_{\R^n}.
\end{multline*}
Finally, letting $k \to \infty$, we obtain (using in particular the LDC theorem) that
\begin{multline*}
 \Bigg(  \psi^0 \Big( \partial_1 \varphi (x(a),x(b)) +  \partial_2 \varphi (x(a),x(b)) +  \I^\beta_{a+} \Big[ \partial_1 L (x, u, \cdot) \Big](b) \Big) \\
  + \Big( \partial_1 g ( x(a) , x(b) )^\top  + \partial_2 g ( x(a) , x(b) )^\top \Big) \times \psi \Bigg) \cdot y' \leq 0.
\end{multline*}
Since the above inequality is satisfied for all $y' \in \R^n$ and since we can write 
$$ \I^\beta_{a+} \Big[ \partial_1 L (x, u, \cdot) \Big](b) = \I^1_{b-} \left[ \dfrac{(b-\cdot)^{\beta-1}}{\Gamma(\beta)}  \partial_1 L (x,u,\cdot) \right](a), $$
we deduce the crucial equality given by
\begin{multline}\label{eqcrucial2}
\Big( \psi^0 \partial_2 \varphi (x(a),x(b)) +  \partial_2 g ( x(a) , x(b) )^\top \times \psi \Big) + \psi^0  \I^1_{b-} \left[ \dfrac{(b-\cdot)^{\beta-1}}{\Gamma(\beta)}  \partial_1 L (x,u,\cdot) \right](a) \\[8pt]
    = - \psi^0 \partial_1 \varphi (x(a),x(b))- \partial_1 g ( x(a) , x(b) )^\top \times \psi .
\end{multline}

\paragraph{Needle-perturbation of $u_k$.} Let $(\t,v) \in \PPP \times  \BB_{\R^n}(0_{\R^n},R)$ where
$$ \PPP := \Big\lbrace \t \in (a,b) \mid (u_k(\t))_{k \in \N} \text{ converges to } u(\t) \Big\rbrace \cap \bigcap_{k \in \N} \PPP(u_k,y_k). $$
Note that $\PPP$ is of full measure in $[a,b]$. Let us fix some $k \in \N$. From Inequality~\eqref{ineq}, it holds that
\begin{equation*}
- 2R \sqrt{\eps_k} \leq \dfrac{1}{\J_k \Big( u_k^{(\t,v)}(\cdot,h),y_k \Big) + \J_k (u_k,y_k)} \times \dfrac{\J^2_k \Big( u_k^{(\t,v)}(\cdot,h),y_k \Big) - \J^2_k (u_k,y_k)}{h}, 
\end{equation*}
for all $0 < h \leq b - \t$. Letting $h \to 0^+$, we exactly get from Lemma~\ref{lemneedletraj} and Proposition~\ref{propneedle} that
\begin{multline*}
 \Bigg( \dfrac{(b-\t)^{\alpha-1}}{\Gamma(\alpha)}  \Big( \psi^0_k \partial_2 \varphi (x_k(a),x_k(b)) + \partial_2 g ( x_k(a) , x_k(b) )^\top \times \psi_k \Big) \\
  + \psi^0_k \I^\alpha_{b-} \left[ \dfrac{(b-\cdot)^{\beta-1}}{\Gamma(\beta)}  \partial_1 L (x_k,u_k,\cdot) \right](\t) \Bigg) \cdot (v-u_k(\t)) \\[8pt]
  + \psi^0_k \dfrac{(b-\t)^{\beta-1}}{\Gamma(\beta)}  \Big( L(x_k(\t),v,\t)-L(x_k(\t),u_k(\t),\t) \Big) \leq 2R \sqrt{\eps_k}.
\end{multline*}
Finally, letting $k \to \infty$, we obtain (using in particular the LDC theorem and the fact that $(u_k(\t))_{k \in \N}$ converges to $u(\t)$) that
\begin{multline}\label{eqcrucial1}
 \Bigg( \dfrac{(b-\t)^{\alpha-1}}{\Gamma(\alpha)}  \Big( \psi^0 \partial_2 \varphi (x(a),x(b)) + \partial_2 g ( x(a) , x(b) )^\top \times \psi \Big)  \\
  + \psi^0 \I^\alpha_{b-} \left[ \dfrac{(b-\cdot)^{\beta-1}}{\Gamma(\beta)}  \partial_1 L (x,u,\cdot) \right](\t) \Bigg) \cdot (v-u(\t)) \\[8pt]
  + \psi^0 \dfrac{(b-\t)^{\beta-1}}{\Gamma(\beta)}  \Big( L(x(\t),v,\t)-L(x(\t),u(\t),\t) \Big) \leq 0.
\end{multline}
Note that the above crucial inequality is satisfied for almost all $\t \in [a,b]$ and all $v \in \BB_{\R^n}(0_{\R^n},R)$.

\paragraph{Inroduction of an adjoint vector.} Let us introduce the adjoint vector $p$ defined by
\begin{multline*}
p(t) := \dfrac{(b-t)^{\alpha-1} }{\Gamma(\alpha)} \Big( \psi^0 \partial_2 \varphi (x(a),x(b)) + \partial_2 g ( x(a) , x(b) )^\top \times \psi \Big) \\ + \psi^0 \I^\alpha_{b-} \left[ \dfrac{(b-\cdot)^{\beta-1}}{\Gamma(\beta)} \partial_1 L (x,u,\cdot) \right](t), 
\end{multline*}
for almost every $t \in [a,b]$. In particular it holds that
$$ \I^{1-\alpha}_{b-}[p](t) = \Big( \psi^0 \partial_2 \varphi (x(a),x(b)) + \partial_2 g ( x(a) , x(b) )^\top \times \psi \Big) + \psi^0 \I^1_{b-} \left[ \dfrac{(b-\cdot)^{\beta-1}}{\Gamma(\beta)} \partial_1 L (x,u,\cdot) \right](t), $$
for all $t \in [a,b]$.
We deduce that $p \in \AC^\alpha_{b-}$ with
$$ \Dp [p] (t) = \psi^0 \dfrac{(b-t)^{\beta-1}}{\Gamma(\beta)}  \partial_1 L (x(t),u(t),t), $$
for almost every $t \in [a,b]$. Moreover, in particular from Equality~\eqref{eqcrucial2}, it holds that
\begin{eqnarray*}
- \I^{1-\alpha}_{b-}[p](a) & = & \psi^0 \partial_1 \varphi (x(a),x(b)) + \partial_1 g ( x(a) , x(b) )^\top \times \psi , \\
\I^{1-\alpha}_{b-}[p](b)&  = & \psi^0 \partial_2 \varphi (x(a),x(b)) + \partial_2 g ( x(a) , x(b) )^\top \times \psi .
\end{eqnarray*}
From Inequality~\eqref{eqcrucial1}, it is clear that
\begin{equation}\label{eqmax}
 u(t) \in \argmax_{v \in \BB_{\R^n}(0_{\R^n},R) } \Big\lbrace p(t) \cdot v +\psi^0 \dfrac{(b-t)^{\beta-1}}{\Gamma(\beta)}  L(x(t),v,t) \Big\rbrace,
\end{equation}
for almost every $t \in [a,b]$. One can easily deduce that
$$ p(t) = - \psi^0 \dfrac{ (b-t)^{\beta-1}}{\Gamma(\beta)} \partial_2 L (x(t),u(t),t), $$
for almost every $t \in [a,b]$. 

\paragraph{Normalization.} 
By contradiction, let us assume that $\psi^0 = 0$. In that case, one can easily deduce from the above equalities that $ \partial_1 g(x(a),x(b))^\top \times \psi = \partial_2 g(x(a),x(b)) )^\top \times \psi = 0_{\R^n}$. Since $g$ is assumed to be regular at $(x(a),x(b))$, we deduce that $\psi = 0_{\R^j}$ which raises a contradiction with the nontriviality of the couple $(\psi^0,\psi)$. We deduce that $\psi^0 < 0$. Moreover, since the couple~$(\psi^0,\psi)$ is defined up to a positive multiplicative constant, we now normalize the couple $(\psi^0,\psi)$ such that~$\psi^0 = -1$.

\paragraph{End of the proof.} 
We deduce from the previous paragraphs that $p = \frac{(b-\cdot)^{\beta-1}}{\Gamma(\beta)} \partial_2 L (x,u,\cdot) \in \AC^\alpha_{b-}$ with
$$ \Dp \left[  \dfrac{(b-\cdot)^{\beta-1}}{\Gamma(\beta)}  \partial_2 L (x,u,\cdot) \right] (t) = - \dfrac{(b-t)^{\beta-1}}{\Gamma(\beta)}  \partial_1 L (x(t),u(t),t), $$
for almost every $t \in [a,b]$, which exactly corresponds to the Euler-Lagrange equation stated in Theorem~\ref{thm2} since $u = \CDm[x]$. We also deduce from the equalities on $\I^{1-\alpha}_{b-} [p](a)$ and $\I^{1-\alpha}_{b-} [p](b)$ in a previous paragraph that the transversality conditions given in Theorem~\ref{thm2} are satisfied. Finally, from the maximization condition~\eqref{eqmax}, it is clear that the matrix $\frac{(b-t)^{\beta-1}}{\Gamma(\beta)} \partial^2_{22} L(x(t),u(t),t)$ is positive semi-definite for almost all $t \in [a,b]$, which exactly corresponds to the Legendre condition given in Theorem~\ref{thm2} since $u = \CDm[x]$.

\bibliographystyle{plain}

\end{document}